\UseRawInputEncoding
\documentclass[12pt,reqno]{article}

\usepackage{amsfonts}
\usepackage{mathrsfs}

\setlength{\textwidth}{6.3in} \setlength{\textheight}{9.25in}
\setlength{\evensidemargin}{0in} \setlength{\oddsidemargin}{0in}
\setlength{\topmargin}{-.3in}

\usepackage{amsmath,amsthm,amsfonts,amssymb,latexsym,mathrsfs,color,bm}
\usepackage{float}

\usepackage[colorlinks=true,
linkcolor=webblue, filecolor=webbrown, citecolor=webred ]{hyperref}
\definecolor{webblue}{rgb}{0,.5,0}
\definecolor{webred}{rgb}{0,.5,0}
\definecolor{webbrown}{rgb}{.6,0,0}

\newtheorem{thm}{Theorem}[section]
\newtheorem{lem}[thm]{Lemma}
\newtheorem{coro}[thm]{Corollary}
\newtheorem{prop}[thm]{Proposition}
\newtheorem{conj}[thm]{Conjecture}
\newtheorem{re}[thm]{Remark}

\newtheorem{cl}{Claim}

\theoremstyle{definition}
\newtheorem{definition}[thm]{Definition}
\newtheorem{ex}[thm]{Example}

\newtheorem{rem}[thm]{Remark}

\numberwithin{equation}{section}

\newcommand{\D}{\displaystyle}
\newcommand{\DF}[2]{\frac{\D#1}{\D#2}}


\title{Stability of combinatorial polynomials and its applications
\thanks{Supported partially by the National Natural Science Foundation of China (Nos. 11971206, 12022105) and the
Natural Science Fund for Distinguished Young Scholars of Jiangsu Province (No. BK20200048).
\newline\hspace*{5mm}
{\it Email address:} ding-mj@hotmail.com (M.-J. Ding),
bxzhu@jsnu.edu.cn (B.-X. Zhu)}}
\author{Ming-Jian Ding$^a$ and Bao-Xuan Zhu$^b$}
\date{\footnotesize
$^a$School of Mathematical Sciences, Dalian University of Technology, Dalian 116024, PR China\\
$^b$School of Mathematics and Statistics, Jiangsu Normal University,
Xuzhou 221116, PR China}

\begin{document}

\maketitle

\begin{abstract}
Many important problems are closely related to the zeros of
certain polynomials derived from combinatorial objects. The aim
of this paper is to make a systematical study on the stability of
polynomials in combinatorics.

Applying the characterizations of Borcea and Br\"and\'en concerning
linear operators preserving stability, we present criteria for real
stability and Hurwitz stability of recursive polynomials. We also
give a criterion for Hurwitz stability of the Tur\'{a}n expressions
of recursive polynomials. As applications of these criteria, we
derive some stability results occurred in the literature in a unified
manner. In addition, we obtain the Hurwitz stability of Tur\'{a}n
expressions for alternating runs polynomials of types $A$ and $B$
and solve a conjecture concerning Hurwitz stability of alternating
runs polynomials defined on a dual set of Stirling permutations.

Furthermore, we prove that the Hurwitz stability of any symmetric
polynomial implies its semi-$\gamma$-positivity. We study a class of
symmetric polynomials and derive many nice properties including
Hurwitz stability, semi-$\gamma$-positivity, non $\gamma$-positivity,
unimodality, strong $q$-log-convexity, the Jacobi continued fraction
expansion and the relation with derivative polynomials. In particular,
these properties of the alternating descents polynomials of types
$A$ and $B$ can be obtained in a unified approach.

Finally, based on the $h$-polynomials from combinatorial
geometry, we use real stability to prove a criterion for zeros
interlacing between a polynomial and its reciprocal polynomial,
which in particular implies the alternatingly increasing property
of the original polynomial. This criterion extends a result of
Br\"and\'en and Solus and unifies such properties for many
combinatorial polynomials, including ascent polynomials for $k$-ary
words, descent polynomials on signed Stirling permutations and
colored permutations and $q$-analog of descent polynomials on
colored permutations, and so on. Furthermore, we also obtain
a recurrence relation and zeros interlacing of $q$-analog of
descent polynomials on colored permutations that extend some
results of Br\"and\'en and Brenti. In addition, as an application
of Hurwitz stability, we prove the alternatingly increasing property
and zeros interlacing for two kinds of peak polynomials on the dual
set of Stirling permutations.
\bigskip\\
{\sl MSC:}\quad 05A15; 26C10; 05A20; 30B70
\bigskip\\
{\sl Keywords:}\quad Stability; Hurwitz stability; Real zeros;
Unimodality; $\gamma$-positivity; Semi-$\gamma$-positivity; Strong
$q$-log-convexity; Continued fractions; Alternatingly increasing
property; Stirling permutations; Descent polynomials; Peak
polynomials; $h$-polynmials
\end{abstract}
\tableofcontents

\newpage

\section{Introduction}
The analytic theory of polynomials plays a significant role in
different fields, such as analysis, combinatorics, probability,
optimization, real algebraic geometry, automatic control theory and
statistical physics, see the monograph \cite{M66}. In particular,
the theory of multivariate stable polynomials recently displays more
and more power to solve some hard problems
\cite{BB09,BB09CPAM,BB10,BBT09,Bra07,Wan211}. The problems center in
the analytic theory of polynomials is the study of the zeros or
coefficients. The zeros of a polynomial can often reveal a variety
of information. In addition, many important problems can be
transformed to the distribution of zeros of polynomials, such as the
four color problem \cite{Bir12}, the Riemann hypothesis
\cite{GNRZ19}, the Lee-Yang program on phase transitions in
equilibrium statistical mechanics \cite{LY52,YL52}, and the
construction of Ramanujan graphs \cite{MMS15}. In combinatorics, the
zeros of polynomials are often used to determine the (combinatorial)
information of the coefficients, such as asymptotical normality,
unimodality, log-concavity, $q$-log-convexity, $\gamma$-positivity,
P\'olya frequency, total positivity, alternatingly increasing
property, see \cite{Bra15,Bre94,Sta89}.

The differential operators often arise in analysis. Many classical
orthogonal polynomials can be generated from different differential
operators, such as Legendre polynomials
$L_n(x)=\frac{1}{2^nn!}D_x^n(x^2-1)^n$, Laguerre polynomials
$\mathcal {L}_n(x)=e^{x}D_x^n(x^ne^{-x})$, Hermite polynomials
$H_n(x)=(-1)^ne^{x^2}D_x^ne^{-x^2}$, where $D_x=d/dx$. In addition,
orthogonal polynomials often satisfy certain differential
recursive relations, for example, the Jacobi polynomial
$P^{(\alpha,\beta)}_n(x)$ satisfies
$$
   2nP^{(\alpha,\beta)}_n(x)
 = [\alpha-\beta+x(\alpha+\beta+2)]P^{(\alpha+1,\beta+1)}_{n-1}(x)
   -(x^2-1)D_xP^{(\alpha+1,\beta+1)}_{n-1}(x).
$$
The combinatorial polynomials often also have such property. For
example,
$$
(xD_x)^n\left(\frac{1}{1-x}\right)=\frac{xA_n(x)}{(1-x)^{n+1}},
$$
where $A_n(x)$ is the classical Eulerian polynomial. We refer the
reader to \cite{Ag14, BF10, CS88} for more combinatorial polynomials
generated in this way. On the other hand, the classical Eulerian
polynomial $A_n(x)$ satisfies the recurrence relation
\begin{equation}\label{Rec+Euler+A}
A_n(x) = [(n-1)x+1]A_{n-1}(x) + x(1-x)D_xA_{n-1}(x),
\end{equation}
where $A_0(x)=1$. In fact, for some combinatorial sequences, their
recurrence relations are very nice feature, which are a useful way to
study many properties. In this paper, we will mainly consider the
zeros distribution of the polynomial $T_n(x)$ satisfying the
following generalized recurrence relation:
\begin{equation}\label{Rec+main+intro}
  T_{n+1}(x) = (\alpha_nx^2+\beta_{n}x+\gamma_{n})T_{n}(x)
  + (\mu_{n}x^3+\nu_{n}x^2+\varphi_{n}x+\psi_n)D_xT_{n}(x),
\end{equation}
where all $\alpha_n,\beta_{n},\gamma_{n}, \mu_{n},\nu_{n},
\varphi_{n},\psi_n$ are real sequences in $\mathbb{R}$.

In Section $2$, with the help of the characterizations of Borcea and
Br\"and\'en concerning the linear operator preserving stability
\cite{BB09}, we present criteria for the real stability of $T_n(x)$
(see Theorem \ref{thm+RS}) and the Hurwitz stability of $T_n(x)$ for
$\nu_{n}=\psi_n=0$ (see Theorem \ref{thm+HS}). These criteria can
be applied to a large number of combinatorial polynomials, such as the
generalized Eulerian polynomials, the Stirling-Whitney-Riordan
polynomials, and deal with those known results occurred in
the literature \cite{HWY15, WY06, YZ15, Zhu20jcta, Zhu21} in a
unified approach. In particular, we obtain the Hurwitz stability of
alternating runs polynomials defined on a dual set of Stirling
permutations, which solves a conjecture in \cite{MW16}. Furthermore,
we give a criterion for Hurwitz stability of certain linear
combination of $T_n(x)$ for $\alpha_n=\mu_{n}=\psi_n=0$ (see
Proposition \ref{prop+HS+linear+comb}), which extends a
corresponding result for $A_n(x)$ due to Zhang and Yang \cite{YZ15}.

In Section $3$, we prove a result for the Hurwitz stability of
a nonlinear operator on polynomials called the Tur\'{a}n expression
 (see Theorem \ref{thm+Turan}). It unifies plenty of known
results in \cite{CGV15,Z18ejc,Zhu20jcta,Zhu21}. In addition, we also
prove the Hurwitz stability of Tur\'{a}n expressions for alternating
runs polynomials of types $A$ and $B$, up-down runs polynomials, and
so on. In particular, Hurwitz stability of these Tur\'an expressions
implies $q$-log-convexity of the original polynomial sequence,
repectively.

The symmetric polynomials often have more nice properties. In
Section $4$, we prove that Hurwitz stability of any symmetric
polynomial implies its semi-$\gamma$-positivity (see Theorem
\ref{thm+sem+gamma}), which is similar to that real rootedness of
any symmetric polynomial implies the $\gamma$-positivity (see
Br\"and\'en \cite{Bra04}). Moreover, we demonstrate the Hurwitz
stability and semi-$\gamma$-positivity for a class of symmetric
polynomials $T_n(x)$ for $\alpha_n=-m_n\mu_n$, $\gamma_n=\beta_n
+m_n\nu_n$, $\varphi_{n}=-\nu_{n}$ and $\psi_n=-\mu_{n}$, where
$m_n=\deg(T_n(x))$ (see Theorem \ref{thm+sym+HS}). We also derive
many other nice properties including unimodality, non
$\gamma$-positivity, strong $q$-log-convexity, the Jacobi
continued fraction expansion and the relation with derivative
polynomials. In particular, these properties of the alternating
descents polynomials of types $A$ and $B$ can be obtained in a
unified approach.

In Section $5$, based on the $h$-polynomials from combinatorial
geometry, we present a criterion for zeros interlacing between a
polynomial and its reciprocal polynomial, which in particular
implies the alternatingly increasing property of the original
polynomial (see Theorem \ref{thm+alter+incr}). This criterion
extends a result of Br\"and\'en and Solus \cite{BS21} and unifies
such properties for many combinatorial polynomials, including ascent
polynomials for $k$-ary words, descent polynomials on signed
Stirling permutations and colored permutations and $q$-analog of
descent polynomials on colored permutations, and so on. On the other
hand, we obtain a recurrence relation and zeros interlacing of
$q$-analog of descent polynomials on colored permutations that
extend some results of Br\"and\'en \cite{Bra06} and Brenti
\cite{Bre94EJC}. Finally, using our results for Hurwitz stability,
we show the alternatingly increasing property and zeros interlacing
for two kinds of peak polynomials on the dual set of Stirling
permutations.

The next is the definition of some notations. Denote $\mathbb{N}^+,
\mathbb{N}, \mathbb{R}^{>0}, \mathbb{R}^{\ge0}, \mathbb{R}$ and
$\mathbb{C}$ be the positive integers, nonnegative integers, positive
real numbers, nonnegative real numbers, real numbers and complex numbers,
respectively. Let $\mathbb{R}[x]$ (resp., $\mathbb{C}[x]$) denote the
set of polynomials over $\mathbb{R}$ (resp., $\mathbb{C}$) and
$\mathbb{R}_n[x]$ (resp., $\mathbb{C}_n[x]$) denote the set of polynomials
with degree at most $n$ over $\mathbb{R}$ (resp., $\mathbb{C}$). Let
$S_n$ represent the symmetric group on $[n] =\{1, 2, \dots, n\}$.

\section{Stability of polynomials}

\subsection{Definitions of stability}
Let $H\subset\mathbb{C}$ be an open half-plane whose
boundary contains the origin, namely $H=\left\{ z \in \mathbb{C} |\,
\Im(e^{i\theta}z) > 0 \right\}$ for $\theta \in \mathbb{R}$, where
$\Im(z)$ is the image part of $z$ for $z \in \mathbb{C}$. We say that
$f \in \mathbb{C}[z_1, \cdots, z_n]$ is {\it $H$-stable} if it is
either identically zero or nonvanishing whenever $z_i \in H$ for any $i
\in [n]$. In particular, $f$ is called {\it stable} if $H$ is the upper
half-plane ($\theta = 0$), and $f$ is {\it real stable} if all
coefficients of $f$ are real. Clearly, a univariate polynomial $f$
is real stable if and only if $f$ has only real zeros. Similarly,
$f$ is called {\it Hurwitz stable} if $H$ is the right half-plane
($\theta = \pi/2$). We will consider the real stability and
Hurwitz stability of the polynomials in this paper.

Let $f, g \in \mathbb{R}\left[x \right]$ be real-rooted
with zeros $\left\{r_i\right\}$ and $\left\{s_j\right\}$, respectively.
We say that $g$ \emph{interlaces} $f$ if $\deg(f)=\deg(g)+1=n$ and
\begin{equation}\label{1}
 r_{n} \leq s_{n-1}\leq \cdots  \leq s_{2} \leq r_{2} \leq s_{1} \leq r_{1},
\end{equation}
and that $g$ \emph{alternates left of} $f$ if $\deg(f)=\deg(g)=n$ and
\begin{equation}\label{2}
 s_{n} \leq r_{n} \leq \cdots  \leq s_{2} \leq r_{2} \leq s_{1} \leq r_{1}.
\end{equation}

Denote either $g$ \emph{interlaces} $f$ or $g$ \emph{alternates left of}
$f$ by $g \preceq f$. If no equality sign occurs in \eqref{1} and \eqref{2},
then we say that $g$ \emph{strictly interlaces} $f$ and $g$
\emph{strictly alternates left of} $f$, respectively, denoted $g \prec f$.
Here, we denote $g \ll f$ if $g \preceq f$ and the leading coefficients of
$f, g$ have same sign or $f \preceq g$ and the leading coefficients of
$f, g$ have opposite sign. The following Hermite-Biehler Theorem
(see \cite[Theorem 6.3.4]{RS02}), which is a very classical result in
geometry of polynomials, characterizes two zeros-interlacing polynomials.

\begin{thm}[Hermite-Biehler Theorem]\label{thm+HB}
Let $\{f(x), g(x)\} \subseteq \mathbb{R}[x]$. Then $g(x) \ll f(x)$
if and only if $f(x)+ig(x)$ is stable.
\end{thm}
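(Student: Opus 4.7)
The plan is to prove both implications by tracking the net change of $\arg F(x)$, where $F(z) := f(z) + ig(z)$, as $x$ traverses the real line. My two inputs are the factorization $F(z) = c\prod_{k=1}^{N}(z - z_k)$ with $z_k = a_k + ib_k$ and $N = \deg F$, and the fact that $F$ has real coefficients, so that on the real axis $f(x) = \Re F(x)$ and $g(x) = \Im F(x)$, whence $\arg F(x)$ is recoverable directly from $g(x)/f(x)$. A preliminary counting lemma shows: as $x$ moves from $-\infty$ to $+\infty$, the argument of a single factor $(x - z_k)$ changes by $+\pi$ if $\Im z_k > 0$, by $-\pi$ if $\Im z_k < 0$, and (after suitable regularization) by $\pi$ at a real zero. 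Summing over the $N$ factors yields $\Delta(\arg F) = \pi(n_+ - n_-)$, where $n_\pm$ count zeros of $F$ in the open upper and lower half-planes. Stability of $F$ is therefore equivalent to $n_+ = 0$, i.e., $\Delta(\arg F)$ attains its largest admissible decrease in magnitude.

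For the forward direction ($g \ll f \Rightarrow F$ stable), I would use the partial-fraction expansion $g(x)/f(x) = P(x) + \sum_i A_i/(x - r_i)$, with $\deg P \leq \deg g - \deg f$. The interlacing of the zeros combined with the leading-coefficient sign convention packaged into $\ll$ forces, by testing at the roots of $f$, all residues $A_i$ to share a common sign. Consequently $g/f$ is strictly monotone on each interval between consecutive real poles, so $\arg F$ decreases by exactly $\pi$ on each bounded component of $\mathbb{R}\setminus\{r_i\}$ and by $\pi/2$ on each unbounded end, totaling the maximal $-\pi N$; comparison with $\pi(n_+ - n_-)$ forces $n_+ = 0$. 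The converse runs the same argument in reverse: stability bounds $\Delta(\arg F)$ from below, and saturation of that bound, given that $f$ and $g$ are real of the prescribed degrees, occurs only when $g/f$ is monotone on each subinterval, which unpacks to the interlacing pattern together with the correct sign convention on leading coefficients, i.e., $g \ll f$.

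The main obstacle I anticipate is the bookkeeping across the two configurations embedded in $\ll$---strict interlacing ($\deg g = \deg f - 1$) versus alternating-left ($\deg g = \deg f$)---together with the possibility of common real zeros of $f$ and $g$. I would dispose of common zeros at the outset by dividing both $f$ and $g$ by their gcd, a reduction which preserves both sides of the equivalence. The degree mismatch is absorbed by careful accounting of $\arg F$ at $x = \pm\infty$: the leading coefficient of $F$ equals $a_n + ib_n$, which is genuinely complex when $\deg f = \deg g$, and this supplies a nontrivial $\arg F(\pm\infty)$ that compensates the missing endpoint half-turn which would otherwise break the counting identity; in the differ-by-one case the leading coefficient is real and the asymptotics are immediate. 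Both configurations are then handled uniformly by the argument above.
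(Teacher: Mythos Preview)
The paper does not prove the Hermite--Biehler Theorem; it quotes the statement as a classical result and cites Rahman--Schmeisser \cite[Theorem~6.3.4]{RS02} for the proof. There is therefore no in-paper argument to compare your proposal against.

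On its own merits, your outline is the classical Cauchy-index\,/\,argument-principle proof and is essentially sound: factor $F=f+ig$, compute the net variation of $\arg F$ along $\mathbb{R}$ as $\pi(n_+-n_-)$, and identify stability with the extremal value $n_+=0$; on the other side, use the partial-fraction expansion of $g/f$ and the interlacing hypothesis to force all residues to share a sign, hence monotonicity of $g/f$ on each subinterval, hence the extremal total variation. Two points deserve tightening. First, the paper's relation $g\ll f$ packages \emph{two} configurations: $g\preceq f$ with same-sign leading coefficients, \emph{or} $f\preceq g$ with opposite-sign leading coefficients; your residue-sign argument as written only covers the first, and the second needs its own (symmetric) check, including the case $\deg g=\deg f+1$. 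Second, your treatment of real zeros of $F$ (``after suitable regularization by $\pi$'') is where these proofs typically go wrong; dividing out $\gcd(f,g)$ first, as you propose, is the clean fix, but you should verify explicitly that this reduction preserves both the $\ll$ relation (it does, since common real zeros are exactly the repeated equalities in \eqref{1}--\eqref{2}) and stability (it does, since the removed factor is a real polynomial with only real zeros). With those two items pinned down, the argument goes through.
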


Following Theorem \ref{thm+HB}, we state an important result
obtained by Borcea and Br\"and\'en as follows.

\begin{prop}\label{prop+covn}\emph{\cite[Lemma 2.6]{BB10}}
Let $f(x)$ be a real-rooted polynomial that is not identically zero.
The sets
$$
\{g(x)\in \mathbb{R}[x]: g(x) \ll f(x)\} \quad \text{and} \quad
\{g(x)\in \mathbb{R}[x]: f(x) \ll g(x)\}
$$
are convex cones.
\end{prop}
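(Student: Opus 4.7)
The plan is to verify the two convex-cone axioms---closure under positive scalar multiplication and closure under addition---for the first set $\mathcal{C} := \{g \in \mathbb{R}[x] : g \ll f\}$; the argument for the second set is completely analogous by the symmetry of the definition of $\ll$. Closure under positive scaling is immediate from the definition: if $g \ll f$ and $c > 0$, then $cg$ has the same zeros as $g$ and its leading coefficient has the same sign as that of $g$, so whichever clause of the definition of $\ll$ is satisfied by $g$ is also satisfied by $cg$.

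The main content is closure under addition, for which I would argue by direct sign analysis at the zeros of $f$. First, reduce to the generic case in which $f$ has only simple zeros $r_1 > r_2 > \cdots > r_n$; the general case follows by simultaneously perturbing $f, g_1, g_2$ slightly so as to make the interlacings strict and then passing to the limit via Hurwitz's theorem on continuity of zeros. The key observation is the following sign-pattern lemma: for any $g \ll f$ (in either clause of the definition), the values $g(r_1), g(r_2), \ldots, g(r_n)$ are all nonzero and their signs alternate in $i$ according to the rule $\sgn g(r_i) = (-1)^{i-1}\,\sgn(\mathrm{lc}(f))$, a pattern that depends only on $f$ and not on the particular $g$ chosen. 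This is extracted from the interlacing inequalities~\eqref{1} and~\eqref{2} together with the leading-coefficient clause built into $\ll$.

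Granted the sign-pattern lemma, for any $g_1, g_2 \in \mathcal{C}$ the sum $(g_1 + g_2)(r_i) = g_1(r_i) + g_2(r_i)$ inherits the same nonzero, alternating sign pattern, so the intermediate value theorem delivers a zero of $g_1 + g_2$ in each of the $n-1$ open intervals $(r_{i+1}, r_i)$. A further sign analysis of $g_1 + g_2$ as $x \to \pm\infty$, dictated by its leading coefficient, locates any remaining zeros outside $[r_n, r_1]$ in the positions required by $\preceq$, producing either $g_1 + g_2 \preceq f$ or $f \preceq g_1 + g_2$ with the matching leading-sign condition. In either case $g_1 + g_2 \in \mathcal{C}$.

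The main obstacle is the bookkeeping in the ``cross'' case, in which $g_1$ and $g_2$ lie in different clauses of the definition of $\ll$---say $g_1 \preceq f$ with matching leading signs while $f \preceq g_2$ with opposite leading signs. Then $\deg g_1 \ne \deg g_2$ and $\mathrm{lc}(g_1), \mathrm{lc}(g_2)$ have opposite signs, so a careful count of sign changes of $g_1 + g_2$ over the entire real line is needed to confirm that the result lands in the correct clause. A clean alternative that avoids the case analysis altogether is to apply the Hermite--Biehler Theorem (Theorem~\ref{thm+HB}) to translate each $g_i \ll f$ into stability of the auxiliary polynomial $f + i g_i \in \mathbb{C}[x]$ and then establish stability of $f + i(g_1 + g_2)$ via a continuity argument along the path $f + i\bigl((1-t)g_1 + t g_2\bigr)$, $t \in [0,1]$, using Hurwitz's theorem to preclude any root from crossing the real axis into the open upper half-plane.
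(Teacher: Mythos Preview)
The paper does not prove this proposition; it is quoted from \cite[Lemma 2.6]{BB10} and used as a black box. So there is no in-paper argument to compare against, and I assess your proposal on its own merits.

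Your primary route via the sign-pattern lemma is sound. In the strict-interlacing reduction the claim $\sgn g(r_i)=(-1)^{i-1}\sgn(\mathrm{lc}(f))$ does hold uniformly across all four subcases of $g\ll f$ (degrees $n-1,\,n,\,n,\,n+1$), and the intermediate-value count together with the asymptotic sign check at $\pm\infty$ places the zeros of $g_1+g_2$ where they need to be. The ``cross'' bookkeeping you flag is genuine but not deep: since the sign pattern at the $r_i$ is the same for every $g\ll f$, the sum inherits it, the $n-1$ interior sign changes force $\deg(g_1+g_2)\ge n-1$, and the sign of $\mathrm{lc}(g_1+g_2)$ then selects the correct clause of $\ll$ automatically. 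The perturbation step (to make $f$ simple-rooted and the interlacings strict) deserves a sentence more care---if $f$ has a repeated zero $r$ then every $g\ll f$ must vanish at $r$, so you really do have to perturb $f$ as well as the $g_j$---but this is routine.

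Your proposed Hermite--Biehler shortcut, however, has a genuine gap. The path $t\mapsto f+i\bigl((1-t)g_1+tg_2\bigr)$ runs from $f+ig_1$ to $f+ig_2$; the target $f+i(g_1+g_2)$ is \emph{not on this path}, so no continuity argument along it yields the desired conclusion. The clean Hermite--Biehler route is different: for real-rooted $f$ one has $g\ll f$ if and only if $\Im\bigl(g(z)/f(z)\bigr)\le 0$ for all $z$ with $\Im z>0$, and this half-plane condition is manifestly preserved under nonnegative linear combinations in $g$. That one-line argument is essentially how \cite{BB10} proceeds, and it sidesteps all the case analysis.
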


In addition, for Theorem \ref{thm+HB}, Borcea and Br\"and\'en
\cite{BB09} gave an equivalent result: For $f(x), g(x) \in
\mathbb{R}[x]$, the stability of $f(x)+ig(x)$ is equivalent to that
of the bivariate polynomial $f(x)+yg(x)$. Thus, in order to show the
alternating property of zeros of two polynomials, the real stability
of bivariate polynomials is very useful.

For a linear operator
$
\mathbb{T} : \mathbb{R}_n [z] \rightarrow \mathbb{R}[z],
$
we define its {\it algebraic symbol} in $\mathbb{R}[z,w]$ by
$$
    G_{\mathbb{T}}(z+w)
 := \mathbb{T}[(z+w)^n]
  = \sum\limits_{k \le n} \binom{n}{k} \mathbb{T}(z^{k}) w^{n-k}.
$$
The following result for linear operators preserving real
stability of multivariate polynomials is a powerful tool to study real
stability.

\begin{thm}\label{thm+RS+Peter}\emph{\cite[Theorem 2.2]{BB09}}
For $n \in \mathbb{N}$, let $\mathbb{T} : \mathbb{R}_n [z] \rightarrow
\mathbb{R}[z]$ be a linear operator. Then $\mathbb{T}$ preserves stability
if and only if either
\begin{itemize}
\item [\rm (a)] $\mathbb{T}$ has range of dimension at most two and is of
                the form
                $$\mathbb{T} (f) = \alpha(f)P +\beta(f)Q,$$
                where $\alpha, \beta$: $\mathbb{R}_n [z]\rightarrow \mathbb{R}$
                are linear functional and $P, Q$ are real stable polynomial
                such that $P \ll Q$, or
\item [\rm (b)] the bivariate polynomial $G_{\mathbb{T}}(z+w)$ is stable, or
\item [\rm (c)] the bivariate polynomial $G_{\mathbb{T}}(z-w)$ is stable.
\end{itemize}
\end{thm}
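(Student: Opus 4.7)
The plan is to establish both directions separately, with the Grace--Walsh--Szeg\H{o} coincidence theorem as the central tool and Hurwitz's theorem supplying the passage to the boundary of the half-plane.

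For sufficiency, assume (b) holds; case (c) will follow by the substitution $w\mapsto-w$. Given any stable $f(z)\in\mathbb{R}_n[z]$, factor $f(z)=c\prod_{i=1}^{n}(z-\alpha_i)$ with $\Im\alpha_i\le 0$. I would lift $G_{\mathbb{T}}(z+w)=\mathbb{T}[(z+w)^n]$ to the fully polarized object
\[
    H(z;w_1,\ldots,w_n) \;:=\; \mathbb{T}\Bigl[\prod_{i=1}^{n}(z+w_i)\Bigr],
\]
which is symmetric and multi-affine in $w_1,\ldots,w_n$. For every fixed $z_0$ in the open upper half-plane, the diagonal $H(z_0;w,\ldots,w)=G_{\mathbb{T}}(z_0,w)$ is stable in $w$ by hypothesis, so Grace--Walsh--Szeg\H{o} applied in the $w_i$-variables forces $H(z_0;w_1,\ldots,w_n)\neq 0$ whenever all $w_i$ lie in the open upper half-plane. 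Letting $z_0$ vary shows $H$ is jointly stable in $(z,w_1,\ldots,w_n)$. Substituting $w_i=-\alpha_i$ (each in the closed upper half-plane) then produces $\mathbb{T}(f)(z)=c\,H(z;-\alpha_1,\ldots,-\alpha_n)$, and Hurwitz's theorem guarantees that this boundary evaluation is either stable in $z$ or identically zero, giving the desired conclusion.

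For necessity, suppose $\mathbb{T}$ preserves stability and its range has dimension at least three, so case (a) is excluded. For each $\zeta$ in the closed upper half-plane the test polynomial $(z+\zeta)^n$ is stable, hence $\mathbb{T}[(z+\zeta)^n]=G_{\mathbb{T}}(z,\zeta)$ is stable in $z$ or identically vanishes. A Hurwitz-type limit argument then upgrades this slicewise non-vanishing to joint bivariate non-vanishing of $G_{\mathbb{T}}(z,w)$ on either the product of two upper half-planes, yielding (b), or on the product of an upper and a lower half-plane, yielding (c); the two options reflect a sign ambiguity in how the zeros of $G_{\mathbb{T}}(z,\zeta)$ migrate as $\zeta$ crosses the real axis.

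The main obstacle is showing that the necessity step produces a \emph{consistent} choice between (b) and (c) rather than a mix across different slices. This orientation argument relies on the dimension-at-least-three assumption on the range, which precludes case-(a) degeneracies, together with the convex cone structure from Proposition~\ref{prop+covn} applied to the family $\{G_{\mathbb{T}}(z,\zeta)\}_\zeta$: interlacing of these polynomials as $\zeta$ varies along the real axis constrains the common orientation and forces exactly one of (b) or (c) to hold throughout.
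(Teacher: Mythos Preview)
The paper does not supply its own proof of this theorem: it is quoted verbatim as \cite[Theorem 2.2]{BB09} and used as a black box throughout Section~2. There is therefore nothing in the paper to compare your proposal against.

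As for the proposal on its own merits: your sufficiency argument is the standard one and is correct. Polarizing $G_{\mathbb{T}}(z+w)$ to the symmetric multi-affine $H(z;w_1,\ldots,w_n)$, invoking Grace--Walsh--Szeg\H{o} in the $w$-variables (the upper half-plane is convex, so no degree restriction is needed), and then specializing $w_i=-\alpha_i$ with a Hurwitz limit to reach the boundary is exactly how Borcea and Br\"and\'en prove that direction.

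Your necessity argument, however, has a real gap at precisely the point you flag. Showing that a single orientation---either (b) or (c)---prevails globally is the substantive content of the theorem, and in \cite{BB09} it is not done by the convex-cone trick of Proposition~\ref{prop+covn}. What you have written amounts to ``the slices interlace, so the orientation cannot flip,'' but interlacing along the real axis says nothing about what happens as $\zeta$ moves into the upper half-plane, and the dimension-at-least-three hypothesis is used in \cite{BB09} in a much more structural way (to rule out the degenerate rank-$\le 2$ operators of case~(a), after which the symbol is shown to be nonvanishing via a separate argument). If you intend to actually prove the necessity direction you will need to supply a genuine mechanism---for instance, the transcription to the Weyl algebra and the classification of stability preservers for that setting, as in \cite{BB09,BB10}---rather than the heuristic you give.
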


\subsection{Real stability}
For the recurrence relation (\ref{Rec+main+intro}), for brevity, let
$\bm{\alpha}$ (resp., $\bm{\beta},\bm{\gamma}, \bm{\mu},\bm{\nu},\bm{\varphi},
\bm{\psi})$ denote $\alpha_n$ (resp., $ \beta_{n},\gamma_{n}, \mu_{n},\nu_{n},
\varphi_{n},\psi_n)$. Then we can rewrite (\ref{Rec+main+intro}) as
\begin{equation}\label{Rec+main}
  T_{n+1}(x) = (\bm{\alpha} x^2+\bm{\beta} x+\bm{\gamma} )T_{n}(x)
  + (\bm{\mu} x^3+\bm{\nu} x^2+\bm{\varphi} x+\bm{\psi})D_xT_{n}(x).
\end{equation}
Let $\deg(T_n(x))= m_n$ and define $F(x)$ and $G(x)$ by
\begin{equation}\label{eq+FG}
\left\{
\begin{array}{lcc}
 F(x) = \bm{\alpha} x^2+\bm{\beta} x+\bm{\gamma},                                    \\
 G(x) = (\bm{\alpha}+m_n\bm{\mu}) x^3+(\bm{\beta}+m_n\bm{\nu}) x^2
        +(\bm{\gamma}+m_n\bm{\varphi})x+m_n\bm{\psi}.
\end{array}
\right.
\end{equation}

For the recurrence relation \eqref{Rec+main}, it can be generated
from a linear operator $T$ defined by
\begin{equation}\label{Ope+main}
T := (\bm{\alpha} x^2+\bm{\beta} x+\bm{\gamma} )I
     + (\bm{\mu} x^3+\bm{\nu} x^2+\bm{\varphi} x+\bm{\psi})D_x,
\end{equation}
where $I$ is the identity operator and $D_x$ is the differential
operator $d/dx$.
We present one of the main results concerning stability as follows.

\begin{thm}\label{thm+FG}
The operator $T$ defined by \eqref{Ope+main} preserves real stability if $F(x) \ll G(x)$.
\end{thm}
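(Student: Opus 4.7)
The plan is to invoke Theorem \ref{thm+RS+Peter}(b) by computing the algebraic symbol $G_T(z+w)=T[(z+w)^{m_n}]$ and showing it is a stable bivariate polynomial. The real-coefficient hypothesis then upgrades stability preservation to real stability preservation, since a univariate polynomial with real coefficients is stable exactly when it is real-rooted.

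First I would apply $T$ from \eqref{Ope+main} directly to $(z+w)^{m_n}$. Because $D_z(z+w)^{m_n}=m_n(z+w)^{m_n-1}$, the result factors cleanly as
\[
T[(z+w)^{m_n}] = (z+w)^{m_n-1}\bigl[(\bm{\alpha}z^2+\bm{\beta}z+\bm{\gamma})(z+w) + m_n(\bm{\mu}z^3+\bm{\nu}z^2+\bm{\varphi}z+\bm{\psi})\bigr].
\]
I then split $(\bm{\alpha}z^2+\bm{\beta}z+\bm{\gamma})(z+w)=zF(z)+wF(z)$ and collect the $z$-only terms: the identity $zF(z)+m_n(\bm{\mu}z^3+\bm{\nu}z^2+\bm{\varphi}z+\bm{\psi})=G(z)$ is immediate from the definition of $G$ in \eqref{eq+FG}, so the bracket equals $G(z)+wF(z)$. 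Hence $G_T(z+w)=(z+w)^{m_n-1}[G(z)+wF(z)]$.

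Next I would verify stability of each factor. The factor $(z+w)^{m_n-1}$ is a power of the stable linear form $z+w$, hence stable. For $G(z)+wF(z)$, the hypothesis $F(z)\ll G(z)$ combined with the Hermite-Biehler Theorem (Theorem \ref{thm+HB}) gives stability of $G(z)+iF(z)$, and by the Borcea-Br\"and\'en equivalence recalled immediately after Theorem \ref{thm+HB} in the excerpt, this is in turn equivalent to stability of the bivariate polynomial $G(z)+wF(z)$. Since a product of stable polynomials is stable, the algebraic symbol $G_T(z+w)$ is stable, and Theorem \ref{thm+RS+Peter}(b) forces $T$ to preserve stability.

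There is no substantive obstacle here: the whole argument rests on the one clean factorization identity above, and the rest is a chain of citations (Hermite-Biehler, the bivariate equivalence, Theorem \ref{thm+RS+Peter}). The only case that requires a separate remark is the degenerate one $m_n=0$, where $T[1]=F(z)$ and the hypothesis $F\ll G$ already forces $F$ to be real-rooted, so the claim is immediate.
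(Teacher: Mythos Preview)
Your proof is correct and follows essentially the same route as the paper: compute $T[(z+w)^{m_n}]=(z+w)^{m_n-1}[G(z)+wF(z)]$, observe that the first factor is stable, and reduce stability of the second factor to $F\ll G$ via Hermite--Biehler and the bivariate equivalence. Your added remark on the degenerate case $m_n=0$ is a small nicety the paper omits.
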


\begin{proof}

According to \eqref{Ope+main} and Theorem \ref{thm+RS+Peter}, it
suffices to show that $T(x+y)^{m_n}$ is real stable. Note that we have
\begin{eqnarray*}
  T(x+y)^{m_n}& = & (x+y)^{m_n-1}\left[(\bm{\alpha} x^2+\bm{\beta} x+\bm{\gamma})(x+y)
                    + m_n\left(\bm{\mu} x^3+\bm{\nu} x^2+\bm{\varphi} x+\bm{\psi} \right)\right]  \\
              & = & (x+y)^{m_n-1}[(\bm{\alpha}+m_n\bm{\mu})x^3+(\bm{\beta}+m_n\bm{\nu})x^2
                    +(\bm{\gamma}+m_n\bm{\varphi})x+m_n\bm{\psi}                                  \\
              &   & +(\bm{\alpha} x^2+\bm{\beta} x+\bm{\gamma})y]                                 \\
              & = & (x+y)^{m_n-1}\left[G(x)+F(x)y\right].
\end{eqnarray*}
Obviously, $(x+y)^{m_n-1}$ is real stable. Then we only need to show
that $G(x)+F(x)y$ is real stable. By Theorem \ref{thm+HB},
$G(x)+F(x)y$ is real stable if and only if $F(x) \ll G(x)$.
This completes the proof.
\end{proof}

Next, we will give the sufficient conditions for operator $T$ defined by
\eqref{Ope+main} preserving real stability according to the degree conditions
of $F(x)$ and $G(x)$.

\begin{thm}\label{thm+RS}
Assume that both the leading coefficients of $F(x)$ and $G(x)$ are
positive and $0\leq\deg(G(x))-\deg(F(x)) \leq  1$. If $T_{n_0}(x)$
is real stable, then so is $T_n(x)$ in \eqref{Rec+main} for
$n \ge n_0$ under any of the following conditions:
\begin{itemize}
\item [\rm (1)] $\deg(F(x))\leq 1$ and $\bm{\beta\gamma\varphi}
                -\bm{\gamma}^2\bm{\nu}-\bm{\beta}^2\bm{\psi} \ge 0$,
\item [\rm (2)] $\deg(F(x)) = \deg(G(x))  =  2$, $\bm{\psi}=0$ and
                $m_n(\bm{\beta}+m_n\bm{\nu})(\bm{\beta\varphi}-\bm{\gamma\nu})
                -\bm{\alpha}(\bm{\gamma}+m_n\bm{\varphi})^2 \ge 0$.
\end{itemize}
\end{thm}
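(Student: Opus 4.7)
The natural plan is to induct on $n$, with the base case supplied by the real stability of $T_{n_0}(x)$. For the inductive step, rewrite $T_{n+1}(x) = T[T_n(x)]$ where $T$ is the operator of \eqref{Ope+main} acting on polynomials of degree $m_n = \deg T_n(x)$; Theorem \ref{thm+FG} then tells us it is enough to verify the interlacing $F(x) \ll G(x)$ for the current value of $m_n$. Thus the whole proof reduces to a case-by-case check that the hypotheses in (1) and (2) imply $F \ll G$, via the definition of $\ll$ in terms of zero interlacing and leading-coefficient signs.

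For case (1), $\deg F \le 1$ forces $\alpha = 0$, and the constraint $0 \le \deg G - \deg F \le 1$ together with positivity of the leading coefficients pins down the possible degree patterns — the main sub-case being $\deg F = 1$, $\deg G = 2$, in which necessarily $\mu = 0$. Writing $F(x) = \beta x + \gamma$ with $\beta > 0$, its unique zero is $-\gamma/\beta$, and the interlacing $F \ll G$ amounts to the statement that $-\gamma/\beta$ lies between the two real zeros of the quadratic $G$, equivalently $G(-\gamma/\beta) \le 0$. A direct computation yields
$$
\beta^{2}\, G\bigl(-\gamma/\beta\bigr) \;=\; -\, m_n \bigl(\beta\gamma\varphi - \gamma^{2}\nu - \beta^{2}\psi\bigr),
$$
which is non-positive precisely by the hypothesis in (1); real-rootedness of $G$ is then automatic, since a quadratic with positive leading coefficient that takes a non-positive value must have real zeros. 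The degenerate sub-cases ($\deg F = 0$, or $\deg G$ smaller) are vacuous or handled in the same style.

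For case (2), $\deg F = \deg G = 2$ together with $\psi = 0$ and positivity of the leading coefficient of $G$ forces $\alpha + m_n\mu = 0$, so that
$$
G(x) \;=\; (\beta + m_n\nu)\,x^{2} + (\gamma + m_n\varphi)\,x \;=\; x\bigl[(\beta + m_n\nu)x + (\gamma + m_n\varphi)\bigr],
$$
with real zeros $0$ and $s = -(\gamma + m_n\varphi)/(\beta + m_n\nu)$. Evaluating $F$ at $s$ gives
$$
(\beta + m_n\nu)^{2}\, F(s) \;=\; \alpha(\gamma + m_n\varphi)^{2} \;-\; m_n(\beta + m_n\nu)(\beta\varphi - \gamma\nu),
$$
which is non-positive precisely under the hypothesis in (2). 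Hence $F$ is real-rooted and $s$ lies between its two zeros; a sign check at the other zero $0$ of $G$ (using $F(0) = \gamma$) together with the behaviour of $F$ at $\pm\infty$ then produces the interlacing pattern required for $F \preceq G$, and since both leading coefficients are positive this is exactly $F \ll G$.

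The main obstacle is the translation step: recognising that the apparently opaque algebraic inequalities in (1) and (2) are, up to positive factors, precisely the values of $G$ at a zero of $F$ (resp.\ of $F$ at a zero of $G$), and then tracking the bookkeeping across degenerate degree sub-cases — in particular in case (2), where the relative order of the two zeros of $G$ depends on the sign of $\gamma + m_n\varphi$ and must be correlated with the sign of $F(0) = \gamma$ to conclude $F \preceq G$. Once this dictionary is in hand, Theorem \ref{thm+FG} and the induction finish the proof at once.
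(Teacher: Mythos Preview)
Your approach is essentially the same as the paper's: reduce via Theorem~\ref{thm+FG} and induction to verifying $F \ll G$, then translate the hypotheses in (1) and (2) into sign conditions on $G$ evaluated at the root of $F$ (respectively $F$ evaluated at the nonzero root of $G$). The paper splits case~(1) into three explicit degree sub-cases rather than isolating a ``main'' one, and in case~(2) it simply asserts that $F(s)\le 0$ implies the interlacing without the additional sign check at $x=0$ that you flag; otherwise the arguments coincide.
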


\begin{proof}

We will prove that $T_n(x)$ is real stable by
induction on $n$. By the assumption, $T_{n_0}(x)$ is real stable. It
follows from  Theorem \ref{thm+FG} that $T_n(x)$ for $n \ge n_0$
is real stable if $F(x) \ll G(x)$. Thus, we will prove that both
conditions in (1) and (2) imply that $F(x) \ll G(x)$.

For (1), because $0\leq\deg(G(x))-\deg(F(x)) \leq  1$, we divide its
proof into the following three cases in terms of the degree
conditions.

Case $1$: $\deg(F(x))=0$ and $\deg(G(x)) \le 1$. Obviously, we have
$\bm{\alpha}=\bm{\bm{\beta}}=\bm{\nu}=0$. This implies
$\bm{\beta}\bm{\gamma}\bm{\varphi}-\bm{\gamma}^2\bm{\nu}-\bm{\beta}^2\bm{\psi}= 0$.
By the assumption that the leading coefficients of $F(x)$ and $G(x)$
are positive, we have $\bm{\gamma}>0$ and $\bm{\gamma}+m_n\bm{\varphi} > 0$.
Then the bivariate polynomial $G(x)+F(x)y$ is reduced to
$$
(\bm{\gamma}+m_n\bm{\varphi})x+m_n\bm{\psi}+\bm{\gamma} y,
$$
which is clearly real stable.

Case $2$: $\deg(F(x))=\deg(G(x))=1$. We have $\bm{\alpha}=\bm{\beta}+m_n\bm{\nu}=0$.
By the assumption that the leading coefficients of $F(x)$ and $G(x)$
are positive, we have $\bm{\beta}>0$ and $\bm{\gamma}+m_n\bm{\varphi} > 0$.
Thus the condition $\bm{\beta}\bm{\gamma}\bm{\varphi}-\bm{\gamma}^2\bm{\nu}
-\bm{\beta}^2\bm{\psi}\geq 0$ implies
\begin{equation}\label{Ineq+1}
\bm{\gamma}^2+m_n\bm{\gamma}\bm{\varphi}-m_n\bm{\beta}\bm{\psi} \ge 0.
\end{equation}
Then $F(x) \ll G(x)$ is reduced to
$$
\bm{\beta} x+\bm{\gamma} \ll (\bm{\gamma}+m_n\bm{\varphi})x+m_n\bm{\psi}.
$$
This is equivalent to
$$
-\frac{\bm{\gamma}}{\bm{\beta}} \leq -\frac{m_n\bm{\psi}}{\bm{\gamma}+m_n\bm{\varphi}} ,
$$
which follows from the inequality \eqref{Ineq+1}.

Case $3$: $\deg(F(x))=1$ and $\deg(G(x))=2$. By the assumption that
the leading coefficients of $F(x)$ and $G(x)$ are positive, we have
$\bm{\beta}>0$ and $\bm{\beta}+m_n\bm{\nu}
>0$. So $F(x) \ll G(x)$ is reduced to
$$
\bm{\beta} x+\bm{\gamma} \ll (\bm{\beta}+m_n\bm{\nu})x^2+(\bm{\gamma}+m_n\bm{\varphi})x+m_n\bm{\psi}.
$$
Obviously, the interlacing follows from
\begin{equation}\label{Ineq+2}
(\bm{\beta}+m_n\bm{\nu})\left(\frac{\bm{\gamma}}{\bm{\beta}}\right)^2
-(\bm{\gamma}+m_n\bm{\varphi})\frac{\bm{\gamma}}{\bm{\beta}}+m_n\bm{\psi}\le 0.
\end{equation}

By calculation, the inequality \eqref{Ineq+2} is equivalent to the
known condition
$$
\bm{\beta} \bm{\gamma} \bm{\varphi}-\bm{\gamma}^2\bm{\nu}-\bm{\beta}^2\bm{\psi} \ge 0.
$$
So we complete the proof of (1).

For (2), $\deg(F(x))=\deg(G(x))=2$. By the assumption that the leading
coefficients of $F(x)$ and $G(x)$ are positive, we have $\bm{\alpha} >0$
and $\bm{\beta}+m_n\bm{\nu} > 0$. Hence for $\bm{\psi}=0$, $F(x) \ll G(x)$ is
reduced to
$$
\bm{\alpha} x^2+\bm{\beta} x+\bm{\gamma} \ll (\bm{\beta}+m_n\bm{\nu})x^2+(\bm{\gamma}+m_n\bm{\varphi})x.
$$
The interlacing is implied by the next inequality
$$
  \bm{\alpha}\left(\frac{\bm{\gamma}+m_n\bm{\varphi}}{\bm{\beta}+m_n\bm{\nu}}\right)^2
- \bm{\beta} \frac{\bm{\gamma}+m_n\bm{\varphi}}{\bm{\beta}+m_n\bm{\nu}}+\bm{\gamma}  \le 0,
$$
that is
$$
m_n(\bm{\beta}+m_n\bm{\nu})(\bm{\beta}\bm{\varphi}-\bm{\gamma}\bm{\nu})-\bm{\alpha}(\bm{\gamma}+m_n\bm{\varphi})^2  \ge 0.
$$
Thus we complete the proof.
\end{proof}

\begin{re}\label{re+SN}
\em
Generally speaking, we mainly consider the polynomial $T_n(x)$ defined
by \eqref{Rec+main} with nonnegative coefficients and the positive
leading coefficients of corresponding $F(x)$ and $G(x)$. Then the stronger
result than Theorem \ref{thm+FG} is that the linear operator $T$ defined by
\eqref{Ope+main} preserves real stability if and only if $F(x)$ and
$G(x)$ have interlacing zeros. In fact, the proof for sufficiency is
similar to Theorem \ref{thm+FG} by the (b) and (c) of Theorem
\ref{thm+RS+Peter} and the proof for necessity can be verified by the
linear operator $T$ acting $(x+w)^{m_n}$ for any $w \in \mathbb{R}$.
\end{re}

In terms of the recurrence relation \eqref{Rec+main}, it is well
known that many combinatorial polynomials can be viewed as the
special case of $T_n(x)$. In what follows, we will apply Theorem
\ref{thm+RS} to the real stability of some combinatorial
polynomials.

Let $a_i, b_i \in \mathbb{R}$ for $i \in [3]$. Define a nonnegative
triangular array $[\mathscr{A}_{n, k}]_{n,k\geq0}$ by
\begin{equation}\label{Rec+A}
 \mathscr{A}_{n,k} =  (a_1n + a_2k + a_3)\mathscr{A}_{n-1,k}
                     +(b_1n + b_2k + b_3)\mathscr{A}_{n-1,k-1}
\end{equation}
for $n\geq1$, where $\mathscr{A}_{0, 0} = 1$ and $\mathscr{A}_{n,k}=0$
unless $0\le k\le n$. For example, $\mathscr{A}_{n,k}$ is the signless
Stirling number of the first kind for $a_1=-a_3=b_3=1$ and the others
are zero and the Stirling number of the second kind for $a_2=b_3=1$
and the others are zero, see \cite{WY06} for more examples. In terms of
the nonnegativity of $[\mathscr{A}_{n,k}]_{n,k\geq0}$, it is natural
to let $a_1n + a_2k + a_3\geq0$ for $n> k\geq0$, which is equivalent to
$$
a_1\geq0,\quad a_1+a_2\geq0,\quad a_1+a_3\geq0.
$$
Let the row-generating function $\mathscr{A}_n(x) = \sum_{k=0}^{n}
\mathscr{A}_{n,k}x^k$. Then we have
\begin{equation}\label{Rec+A+gf}
\mathscr{A}_{n+1}(x) = \left[(b_1n+b_1+b_2+b_3)x+ a_1n+a_1+a_3
\right]\mathscr{A}_n(x)+(b_2x^2+a_2x)D_x\mathscr{A}_n(x),
\end{equation}
where $\deg(\mathscr{A}_n(x)) = n$. Hence, by Theorem \ref{thm+RS}, we
immediately get the following result due to Wang and Yeh
\cite{WY06}.

\begin{coro}\emph{\cite{WY06}}
Let $[\mathscr{A}_{n, k}]_{n,k\geq0}$ be defined by \eqref{Rec+A}.
If $a_1b_2 \le a_2b_1$ and $(a_1+a_3)b_2 \le (b_1+b_2+b_3)a_2$,
then the row-generating function $\mathscr{A}_n(x)$ has only real
zeros for $n \in \mathbb{N}$.
\end{coro}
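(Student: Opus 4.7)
The plan is to view \eqref{Rec+A+gf} as a specialization of the master recurrence \eqref{Rec+main} and deduce real stability of $\mathscr{A}_n(x)$ by induction on $n$ using Theorem \ref{thm+RS}(1). Matching the two recurrences and writing $m_n=\deg(\mathscr{A}_n(x))=n$, one reads off $\bm{\alpha}=\bm{\mu}=\bm{\psi}=0$, together with $\bm{\beta}=b_1(n+1)+b_2+b_3$, $\bm{\gamma}=a_1(n+1)+a_3$, $\bm{\nu}=b_2$, $\bm{\varphi}=a_2$. Substituting into \eqref{eq+FG} then yields
$$F(x)=\bm{\beta}x+\bm{\gamma}, \qquad G(x)=(\bm{\beta}+nb_2)x^2+(\bm{\gamma}+na_2)x,$$
so $\deg(F)\le 1$, $\deg(G)-\deg(F)\in\{0,1\}$, and the leading coefficients are nonnegative by the sign assumptions on the $a_i$ and the analogous ones on the $b_i$ inherited from the nonnegativity of the array.

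Next, I verify the key hypothesis $\bm{\beta}\bm{\gamma}\bm{\varphi}-\bm{\gamma}^2\bm{\nu}-\bm{\beta}^2\bm{\psi}\ge 0$ of Theorem \ref{thm+RS}(1). Since $\bm{\psi}=0$ and $\bm{\gamma}\ge 0$, it factors as $\bm{\gamma}(\bm{\beta}\bm{\varphi}-\bm{\gamma}\bm{\nu})\ge 0$, so it suffices to establish
$$\bigl(b_1(n+1)+b_2+b_3\bigr)a_2-\bigl(a_1(n+1)+a_3\bigr)b_2\ge 0 \qquad \text{for every } n\ge 0.$$
This expression is linear in $n+1$, with slope $a_2b_1-a_1b_2$, nonnegative by the first hypothesis $a_1b_2\le a_2b_1$, and value at $n=0$ equal to $(b_1+b_2+b_3)a_2-(a_1+a_3)b_2$, nonnegative by the second hypothesis. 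Hence the two assumptions of the corollary are precisely what is needed for the Theorem \ref{thm+RS}(1) inequality to hold uniformly in $n$.

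Finally, with the base case $\mathscr{A}_0(x)=1$ trivially real stable, Theorem \ref{thm+RS}(1) propagates real stability up the recurrence, yielding that $\mathscr{A}_n(x)$ has only real zeros for every $n\in\mathbb{N}$. The only minor nuisance, rather than a genuine obstacle, is the treatment of degenerate situations in which the leading coefficient of $F$ or $G$ vanishes (for instance when $b_1=b_2=b_3=0$); in each such case either $\mathscr{A}_n(x)$ drops in degree and the same induction runs on the smaller polynomial, or the bivariate symbol $G(x)+F(x)y$ is real stable by direct inspection, as in Case~$1$ of the proof of Theorem \ref{thm+RS}.
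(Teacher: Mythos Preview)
Your proof is correct and follows essentially the same route as the paper: you specialize the master recurrence \eqref{Rec+main} with $\bm{\alpha}=\bm{\mu}=\bm{\psi}=0$, $\bm{\nu}=b_2$, $\bm{\varphi}=a_2$, and then verify the inequality in Theorem~\ref{thm+RS}(1), which after factoring out $\bm{\gamma}\ge 0$ reduces to the linear-in-$n$ expression whose nonnegativity is exactly guaranteed by the two hypotheses of the corollary. Your explicit slope-plus-initial-value argument and your closing remark on the degenerate cases where a leading coefficient vanishes are slightly more detailed than the paper's treatment, but the substance is identical.
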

\begin{proof}
Note that $\mathscr{A}_n(x)$ satisfies the recurrence relation
\eqref{Rec+A+gf}. For the real rootedness of $\mathscr{A}_n(x)$,
taking $\bm{\beta}=b_1n+b_1+b_2+b_3,\bm{\gamma}=a_1n+a_1+a_3,\bm{\varphi}=a_2,\bm{\nu}=b_2$
and $\bm{\psi}=0$ in (1) of Theorem \ref{thm+RS}, it suffices to prove for
$n\geq0$ that
\begin{eqnarray*}
(b_1n+b_1+b_2+b_3)(a_1n+a_1+a_3)a_2-(a_1n+a_1+a_3)^2b_2\geq0,
\end{eqnarray*}
which is obvious from the conditions $a_1b_2 \le a_2b_1$ and
$(a_1+a_3)b_2 \le (b_1+b_2+b_3)a_2$.
\end{proof}

In terms of the recurrence relation \eqref{Rec+A+gf}, we define an
operator $\mathscr{A}$ by
\begin{equation}\label{Oper+A}
   \mathscr{A}
:= \left[(b_1n+b_1+b_2+b_3)x+ a_1n+a_1+a_3\right]I+(b_2x^2+a_2x)D_x.
\end{equation}
By Theorem \ref{thm+FG} and Remark \ref{re+SN}, we know that the
condition in (1) of Theorem \ref{thm+RS} is actually equivalent to
that the operator $\mathscr{A}$ preserves real stability. Thus,
for the operator $\mathscr{A}$, we have the following stronger result.
\begin{prop}\label{prop+oper+A}
Let $U = b_1a_2 - a_1b_2$ and $V = (b_1 + b_2 + b_3)a_2 - (a_1 +
a_3)b_2$. The operator $\mathscr{A}$ defined by \eqref{Oper+A}
preserves real stability if and only if $V + nU \ge 0$.
\end{prop}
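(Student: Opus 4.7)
My plan is to deduce the ``if and only if'' from Theorem \ref{thm+FG} in its strengthened form recorded in Remark \ref{re+SN}: under the running positivity assumptions on the leading coefficients of $F$ and $G$, the operator $T$ built as in \eqref{Ope+main} preserves real stability if and only if $F(x)\ll G(x)$. Once I have this biconditional, what remains is a routine algebraic reduction of the interlacing condition to the inequality $V+nU\ge 0$.

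To set up the reduction I first read off the data from \eqref{Oper+A}: here $\bm{\alpha}=\bm{\mu}=\bm{\psi}=0$, $\bm{\beta}=b_1n+b_1+b_2+b_3$, $\bm{\gamma}=a_1n+a_1+a_3$, $\bm{\nu}=b_2$, $\bm{\varphi}=a_2$ and $m_n=n$. Substituting into \eqref{eq+FG} gives
\begin{align*}
F(x) &= (b_1n+b_1+b_2+b_3)\,x+(a_1n+a_1+a_3),\\
G(x) &= x\,\bigl[((b_1+b_2)n+b_1+b_2+b_3)\,x+((a_1+a_2)n+a_1+a_3)\bigr],
\end{align*}
so $F$ is linear with positive leading coefficient, and $G$ is a quadratic with positive leading coefficient and nonpositive roots, under the standing nonnegativity assumptions on $[\mathscr{A}_{n,k}]$.

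For sufficiency I reuse Case 3 of the proof of Theorem \ref{thm+RS}, which with $\bm{\psi}=0$ shows that $F\ll G$ is equivalent to $\bm{\beta}\bm{\gamma}\bm{\varphi}-\bm{\gamma}^2\bm{\nu}\ge 0$, that is, $\bm{\gamma}(\bm{\beta}\bm{\varphi}-\bm{\gamma}\bm{\nu})\ge 0$. Plugging in the above values yields
$$
\bm{\beta}\bm{\varphi}-\bm{\gamma}\bm{\nu}= n(b_1a_2-a_1b_2)+\bigl((b_1+b_2+b_3)a_2-(a_1+a_3)b_2\bigr) = nU+V,
$$
while $\bm{\gamma}=a_1n+a_1+a_3\ge 0$ comes for free from the standing constraints $a_1\ge 0$ and $a_1+a_3\ge 0$. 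Hence the hypothesis $V+nU\ge 0$ forces $F\ll G$, and Theorem \ref{thm+FG} yields that $\mathscr{A}$ preserves real stability.

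For necessity I follow the recipe sketched in Remark \ref{re+SN}: I apply $\mathscr{A}$ to the test polynomial $(x+w)^n$ for an arbitrary $w\in\mathbb{R}$. The computation in the proof of Theorem \ref{thm+FG} specializes to
$$
\mathscr{A}(x+w)^n = (x+w)^{n-1}\bigl[G(x)+wF(x)\bigr],
$$
so if $\mathscr{A}$ preserves real stability then $G(x)+wF(x)$ is real rooted for every $w\in\mathbb{R}$; by Theorem \ref{thm+HB} this is precisely $F\ll G$. Reversing the algebraic reduction above yields $\bm{\gamma}(nU+V)\ge 0$, and $V+nU\ge 0$ follows as soon as $\bm{\gamma}>0$. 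The main technical nuisance I anticipate is the degenerate cases in which $\bm{\gamma}=0$ or the leading coefficient of $G$ vanishes: in each such case one of the interlacing inequalities collapses to an equality, and I will need to argue by direct inspection that the remaining nontrivial constraint again reduces to $V+nU\ge 0$.
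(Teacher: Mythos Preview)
Your approach is exactly what the paper does: it states (just before the proposition) that ``By Theorem \ref{thm+FG} and Remark \ref{re+SN}, we know that the condition in (1) of Theorem \ref{thm+RS} is actually equivalent to that the operator $\mathscr{A}$ preserves real stability,'' and then records Proposition \ref{prop+oper+A} as the specialization of that equivalence. Your write-up simply fills in the algebra---identifying $F,G$, reducing $F\ll G$ to $\bm{\gamma}(nU+V)\ge 0$, and using $\mathscr{A}(x+w)^n=(x+w)^{n-1}[G(x)+wF(x)]$ for necessity---which is precisely the content of Theorem \ref{thm+FG} together with Remark \ref{re+SN}; the degenerate cases you flag ($\bm{\gamma}=0$ or $\bm{\beta}+n\bm{\nu}=0$) are not treated separately in the paper either, and are implicitly covered by the standing positivity assumptions.
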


\begin{rem}
Proposition \ref{prop+oper+A} implies \cite[Theorem 3.3]{HWY15}.
In fact, in \cite[Theorem 3.3]{HWY15}, Hao et al. assumed that
$$
b_1 \ge 0, \quad b_1 + b_2 \ge 0, \quad b_1 + b_2 + b_3 \ge 0.
$$
The following example indicates that we can drop the restrict
condition $b_1+b_2\geq0$.
\end{rem}

\begin{ex}[\textbf{Andr\'e Polynomials}]
Let $d_{n,k}$ denote the number of the augmented Andr\'e permutations
in $S_n$ with $k-1$ left peaks. Let
$$
D_n(x)=\sum\limits_{k\ge1}d_{n,k}x^k.
$$
It is known that
$$
d_{n+1,k}=kd_{n,k}+(n-2k+3)d_{n,k-1},
$$
where $d_{1,1}=1$, see Foata and Sc\"utzenberger \cite{FS73} and
\cite[A094503]{Slo} for instance. Note that
$$
D_{n+1}(x)=(n+1)xD_n(x)+x(1-2x)D_xD_n(x)
$$
and the degree of $D_n(x)$ is $\lceil n/2 \rceil$. Taking
$\bm{\beta}=n+1, \bm{\gamma}=0, \bm{\nu}=-2, \bm{\varphi}=1$ and
$\bm{\psi}=0$ in (1) of Theorem \ref{thm+RS}, we have that the
operator
$$
D:=(n+1)xI+x(1-2x)D_x
$$
preserves real stability, which implies the real-rootedness of $D_n(x)$.
\end{ex}

As a generalization of the Stirling triangle of the second kind, the
Whitney triangle of the second kind and one triangle of Riordan, the
Stirling-Whitney-Riordan triangle $[\mathscr{S}_{n, k}]_{n,k\geq0}$
satisfies the recurrence relation
\begin{eqnarray}\label{rec+SWR}
      \mathscr{S}_{n,k}
  &=& (b_1k+b_2)\mathscr{S}_{n-1,k-1}+[(2\lambda b_1+a_1)k
      +\lambda(b_1+b_2)+a_2]\mathscr{S}_{n-1,k}            \nonumber\\
  & & +\lambda(a_1+\lambda b_1)(k+1)\mathscr{S}_{n-1,k+1}
\end{eqnarray}
where $\mathscr{S}_{0, 0} = 1$ and $\mathscr{S}_{n, k} = 0 $ unless
$0\le k \le n$, see \cite{Zhu21}. For its row-generating function
$\mathscr{S}_{n}(x)=\sum_{k=0}^{n} \mathscr{S}_{n,k}x^k$, it satisfies
the recurrence relation
\begin{equation}\label{rec+SWR+gf}
   \mathscr{S}_{n}(x)
 = \left[ a_2+(b_1+b_2)(x+\lambda)\right]\mathscr{S}_{n-1}(x)
   +(x+\lambda)\left[a_1+b_1(x+\lambda)\right]D_x\mathscr{S}_{n-1}(x),
\end{equation}
where $\deg(\mathscr{S}_{n}(x)) = n$. By Theorem \ref{thm+RS}, we
get the following result \cite[Theorem 3.2]{Zhu21}.
\begin{coro}\emph{\cite[Theorem 3.2]{Zhu21}}
Let $a_1, a_2, b_1, b_2, \lambda$ be nonnegative. If $a_1(b_1+b_2) \ge a_2 b_1$,
then $\mathscr{S}_{n}(x)$ defined by \eqref{rec+SWR+gf} has only real zeros.
\end{coro}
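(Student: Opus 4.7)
The plan is to view \eqref{rec+SWR+gf} as a specialization of the master recurrence \eqref{Rec+main} and then invoke Theorem~\ref{thm+RS}(1). Shifting the index $n\mapsto n+1$ and expanding $(x+\lambda)\bigl[a_1+b_1(x+\lambda)\bigr]$ by powers of $x$, the matching with \eqref{Rec+main} reads
\[
\bm{\alpha}=0,\ \bm{\beta}=b_1+b_2,\ \bm{\gamma}=a_2+(b_1+b_2)\lambda,\ \bm{\mu}=0,\ \bm{\nu}=b_1,\ \bm{\varphi}=a_1+2b_1\lambda,\ \bm{\psi}=\lambda(a_1+b_1\lambda).
\]
Under the nonnegativity assumption on $a_1,a_2,b_1,b_2,\lambda$, the corresponding $F(x)$ and $G(x)$ from \eqref{eq+FG} have nonnegative coefficients with $\deg F\le 1$ and $\deg G\le 2$, so the degree and leading-coefficient hypotheses of Theorem~\ref{thm+RS}(1) are in force (the possibly degenerate cases $b_1+b_2=0$ or $\bm{\alpha}=\bm{\beta}+m_n\bm{\nu}=0$ are absorbed into Cases $1$ and $2$ of the proof of that theorem).

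The heart of the argument is to translate the abstract inequality
\[
\bm{\beta}\bm{\gamma}\bm{\varphi}-\bm{\gamma}^2\bm{\nu}-\bm{\beta}^2\bm{\psi}\ge 0
\]
into the combinatorial hypothesis $a_1(b_1+b_2)\ge a_2b_1$. I would substitute the values above and expand carefully. Writing $B=b_1+b_2$ and $\bm{\gamma}=a_2+B\lambda$, the $\lambda^2$-contributions collapse as $2B^2b_1-B^2b_1-B^2b_1=0$ and the $\lambda^1$-contributions cancel as $2a_2Bb_1+B^2a_1-2a_2Bb_1-B^2a_1=0$. What survives is the $\lambda$-free piece
\[
Ba_1a_2 - a_2^2 b_1 \;=\; a_2\bigl[a_1(b_1+b_2) - a_2 b_1\bigr],
\]
which is nonnegative precisely when the stated hypothesis holds (using $a_2\ge 0$).

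With this polynomial identity confirmed, the conclusion is obtained by induction on $n$: the base case $\mathscr{S}_0(x)=1$ is trivially real stable, and Theorem~\ref{thm+RS}(1) propagates real stability up the recurrence. The only technically delicate step is the cancellation of all $\lambda$-dependent contributions in $\bm{\beta}\bm{\gamma}\bm{\varphi}-\bm{\gamma}^2\bm{\nu}-\bm{\beta}^2\bm{\psi}$; once the clean identity $\bm{\beta}\bm{\gamma}\bm{\varphi}-\bm{\gamma}^2\bm{\nu}-\bm{\beta}^2\bm{\psi}=a_2[a_1(b_1+b_2)-a_2b_1]$ is in hand, the corollary is a one-line application of the preceding theorem.
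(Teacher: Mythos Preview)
Your proposal is correct and follows essentially the same route as the paper: both identify the recurrence \eqref{rec+SWR+gf} as the specialization $\bm{\beta}=b_1+b_2$, $\bm{\gamma}=\lambda(b_1+b_2)+a_2$, $\bm{\nu}=b_1$, $\bm{\varphi}=a_1+2\lambda b_1$, $\bm{\psi}=\lambda(a_1+\lambda b_1)$ of \eqref{Rec+main} and verify the inequality in Theorem~\ref{thm+RS}(1). Your explicit identity $\bm{\beta}\bm{\gamma}\bm{\varphi}-\bm{\gamma}^2\bm{\nu}-\bm{\beta}^2\bm{\psi}=a_2\bigl[a_1(b_1+b_2)-a_2b_1\bigr]$ is exactly the simplification the paper asserts (it merely states the two inequalities are ``equivalent'' without displaying the intermediate cancellation), so your argument is in fact slightly more detailed than the original.
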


\begin{proof}
By \eqref{rec+SWR+gf}, we have $F(x)$ and $G(x)$ corresponding to
\eqref{eq+FG} as follows£º
$$
\left\{
\begin{array}{lcc}
 F(x) = (b_1+b_2)x+\lambda(b_1+b_2)+a_2,                                               \\
 \\
 G(x) = (b_1n+b_2)x^2+[(2n-1)\lambda b_1 +\lambda b_2 +(n-1)\lambda a_1+a_2]x
        +(n-1)\lambda(a_1+\lambda b_1).                                                \\
\end{array}
\right.
$$
For the real rootedness of $\mathscr{S}_n(x)$, taking $\bm{\beta}=b_1+b_2,
\bm{\gamma}=\lambda(b_1+b_2)+a_2,\bm{\nu}=b_1, \bm{\varphi}=a_1+2\lambda b_1$ and
$\bm{\psi}=\lambda(a_1+\lambda b_1)$ in (1) of Theorem \ref{thm+RS},
it suffices to prove for $n\geq0$ that
\begin{eqnarray*}
(b_1+b_2)[\lambda(b_1+b_2)+a_2](a_1+2\lambda
b_1)-[\lambda(b_1+b_2)+a_2]^2b_1-(b_1+b_2)^2\lambda(a_1+\lambda
b_1)\geq0.
\end{eqnarray*}
This inequality is equivalent to $a_1(b_1+b_2) \ge a_2 b_1$.
\end{proof}

Based on the classical Eulerian triangle and various triangular
arrays from staircase tableaux, tree-like tableaux and segmented
permutations, Zhu \cite{Zhu20jcta} considered a generalized Eulerian
triangle $[\mathscr{T}_{n,k}]_{n,k\ge 0}$, which satisfies the recurrence
relation:
\begin{eqnarray}\label{rec+gene+Eur}
     \mathscr{T}_{n,k}
 &=& \lambda(a_1k+a_2)\mathscr{T}_{n-1,k}+[(b_1-da_1)n-(b_1-2da_1)k
     +b_2-d(a_1-a_2)]\mathscr{T}_{n-1,k-1}                         \nonumber\\
 & & +\frac{d(b_1-da_1)}{\lambda}(n-k+1)\mathscr{T}_{n-1,k-2},
\end{eqnarray}
where $\mathscr{T}_{0,0}=1$ and $\mathscr{T}_{n,k}=0$ unless
$0 \le k \le n$. In particular, \eqref{rec+gene+Eur} can reduce to
some combinatorial sequences, such as the classical Eulerian numbers
by taking $b_2=d=0$ and $a_1=a_2=b_1=\lambda=1$ and the numbers enumerating
in symmetric tableaux by taking $b_2=d=0, a_1=a_2=\lambda=1$ and $b_1=2$
(see \cite[A109062]{Slo}). We refer the reader to \cite{Zhu20jcta} for more
examples.

We can rewrite \eqref{rec+gene+Eur} by its
row-generating function as follows:
\begin{equation}\label{rec+gene+Eur+gf}
   \mathscr{T}_{n}(x)
 = p_n(x)\mathscr{T}_{n-1}(x)+q_n(x)D_x\mathscr{T}_{n-1}(x),
\end{equation}
where
$$
\left\{
\begin{array}{lll}
  p_n(x) &=&   \frac{(n-1)d(b_1-da_1)}{\lambda}x^2+\left[(n-1)(b_1-da_1)+b_2+da_2 \right]x+\lambda a_2,   \\
         & &   \\
  q_n(x) &=& - \frac{d(b_1-da_1)}{\lambda}x^3-(b_1-2da_1)x^2+\lambda a_1x                                 \\
\end{array}
\right.
$$
and $\deg(\mathscr{T}_n(x)) = n$.

The following result for real rootedness of $\mathscr{T}_n(x)$ proved
in \cite{Zhu20jcta} can easily follow from Theorem \ref{thm+RS}.

\begin{coro}\emph{\cite[Theorem 2.16]{Zhu20jcta}}
Let $a_1, b_1,\lambda$ be positive and $a_2, b_2, d $ be nonnegative.
If $a_2+b_2>0$ and $b_1-da_1 \ge 0$, then the row-generating function
$\mathscr{T}_{n}(x)$ of $[\mathscr{T}_{n,k}]_{n,k}$ in \eqref{rec+gene+Eur}
has only real zeros.
\end{coro}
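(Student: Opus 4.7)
The plan is to recognize $\mathscr{T}_n(x)$ as an instance of the polynomial $T_n(x)$ of \eqref{Rec+main} and invoke Theorem \ref{thm+RS}. Matching \eqref{rec+gene+Eur+gf} against \eqref{Rec+main}, with the operator acting on $\mathscr{T}_{n-1}(x)$ of degree $m_n = n-1$, I would read off
$$
\bm{\alpha} = \tfrac{(n-1)d(b_1-da_1)}{\lambda}, \quad \bm{\beta} = (n-1)(b_1-da_1) + b_2 + da_2, \quad \bm{\gamma} = \lambda a_2,
$$
$$
\bm{\mu} = -\tfrac{d(b_1-da_1)}{\lambda}, \quad \bm{\nu} = -(b_1 - 2da_1), \quad \bm{\varphi} = \lambda a_1, \quad \bm{\psi} = 0.
$$
The argument proceeds by induction on $n$, with base case $\mathscr{T}_0(x) = 1$ being trivially real stable.

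The decisive structural feature is that the combination $\bm{\alpha} + m_n\bm{\mu}$ vanishes identically, so $G(x)$ has degree at most $2$, matching $\deg(F(x)) \le 2$. Since $\bm{\psi} = 0$, part (2) of Theorem \ref{thm+RS} is the natural tool in the generic case where $\bm{\alpha} > 0$. I would then verify the key inequality
$$
m_n(\bm{\beta} + m_n\bm{\nu})(\bm{\beta\varphi} - \bm{\gamma\nu}) - \bm{\alpha}(\bm{\gamma} + m_n\bm{\varphi})^2 \ge 0.
$$
The main obstacle is this algebraic simplification. Introducing the abbreviations $A = (n-1)a_1 + a_2$ and $c = b_1 - da_1 \ge 0$, I expect the left-hand side to telescope via the relations $\bm{\beta} + m_n\bm{\nu} = dA + b_2$, $B - a_2 d = b_2$ (where $B = b_2 + da_2$), and $a_1 d + c = b_1$, producing after cancellation of the $dcA^2$ contributions the closed form $(n-1)\lambda\, b_2\bigl(b_1 A + a_1 b_2\bigr)$, which is manifestly nonnegative since $a_1, b_1, \lambda > 0$ and $a_2, b_2 \ge 0$.

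The remaining subcases correspond to $\bm{\alpha} = 0$, arising when $n = 1$, $d = 0$, or $b_1 = da_1$, where $\deg(F) \le 1$ and part (1) of Theorem \ref{thm+RS} must be invoked instead. Since $\bm{\psi} = 0$, its hypothesis reduces to $\bm{\gamma}(\bm{\beta\varphi} - \bm{\gamma\nu}) \ge 0$, and a parallel (shorter) computation shows this breaks into a sum of nonnegative monomials in the parameters in each subcase. Positivity of the leading coefficients of $F$ and $G$ together with the degree constraint $0 \le \deg(G)-\deg(F) \le 1$ then follows routinely from $a_1, b_1, \lambda > 0$, $a_2, b_2, d \ge 0$, and the hypothesis $a_2 + b_2 > 0$, completing the induction.
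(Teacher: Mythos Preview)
Your proposal is correct and follows essentially the same route as the paper: both identify the parameters in \eqref{Rec+main}, split into the case $\deg(F)=2$ (part (2) of Theorem~\ref{thm+RS}) versus $\deg(F)\le 1$ (part (1)) according to whether $\bm{\alpha}=(n-1)d(b_1-da_1)/\lambda$ vanishes, and in the generic case simplify the key inequality to $(n-1)\lambda\,b_2\bigl(b_1A+a_1b_2\bigr)\ge 0$ with $A=(n-1)a_1+a_2$, exactly as the paper does. Your auxiliary abbreviations $A$, $c$, $B$ make the cancellation of the $dcA^2$ terms slightly more transparent, but the argument is otherwise the same.
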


\begin{proof}

By \eqref{rec+gene+Eur+gf}, we have $F(x)$ and $G(x)$ corresponding
to \eqref{eq+FG} as follows:
$$
\left\{
\begin{array}{lcc}
 F(x) = \frac{(n-1)d}{\lambda}(b_1-da_1)x^2+\left[ (n-1)(b_1-da_1)+b_2+da_2 \right]x+\lambda a_2,      \\
 \\
 G(x) = [(n-1)da_1+b_2+da_2]x^2+\lambda [(n-1)a_1+a_2]x.                                              \\
\end{array}
\right.
$$
Next, we will consider two different cases in terms of $\deg(F(x))$.

Case 1: If $\deg(F(x)) \le 1$, then $d(b_1-da_1)=0$. Furthermore, by
$a_1>0$ and $a_2+b_2>0$, we have $0\le\deg(G(x))-\deg(F(x))\leq1$.
For the real-rootedness of $T_n(x)$, taking $\bm{\beta}=
(n-1)(b_1-da_1)+b_2+da_2, \bm{\gamma}=\lambda a_2, \bm{\nu} =2da_1-b_1,
\bm{\varphi}=\lambda a_1, \bm{\psi}=0$ in (1) of Theorem \ref{thm+RS}, and it
suffices to show
$$
\lambda^2 a_1a_2 [(n-1)(b_1-da_1)+b_2+da_2]-(\lambda
a_2)^2(2da_1-b_1)  \ge 0,
$$
which is equivalent to
$$
(n-1)a_1(b_1-da_1)+a_1b_2+a_2(b_1-da_1) \ge 0.
$$
This is obvious from $a_1>0,b_2\geq0$ and $b_1-da_1\ge0$.

Case 2: If $\deg(F(x))=2$, then $\deg(G(x))=2$. Similarly, taking
$\bm{\alpha} = (n-1)d(b_1-da_1)/\lambda, \bm{\beta}= (n-1)(b_1-da_1)+b_2+da_2,
\bm{\gamma}=\lambda a_2, \bm{\nu} =2da_1-b_1, \bm{\varphi}=\lambda a_1, \bm{\psi}=0$ and
$m_n=n-1$ in (2) of Theorem \ref{thm+RS}. It suffices to show that
\begin{eqnarray*}
& &(n-1)[(n-1)da_1+b_2+da_2][\lambda a_1[(n-1)(b_1-da_1)+b_2+da_2]
   +\lambda a_2(b_1-2da_1)] \\
& &-(n-1)d(b_1-da_1)[\lambda a_2+(n-1)\lambda a_1]^2/\lambda \ge 0.
\end{eqnarray*}
This is equivalent to
$$
(n-1)\lambda  b_2[a_1b_2+a_2b_1+(n-1)a_1b_1] \ge 0.
$$
This inequality follows from nonnegativity of $a_i, b_i$ and $
\lambda$.
\end{proof}


\subsection{Hurwitz stability}
As we know that many combinatorial polynomials have only real zeros.
However, for some other combinatorial polynomials, they don't always
have only real zeros. In this case, they often have all zeros in the
left half-plane, i.e., they are Hurwitz stable. For any univariate
Hurwitz stable polynomial, a nice property is that if its leading
coefficient is positive, then so are all coefficients (see
\cite[Proposition 11.4.2]{RS02}). This is also a useful approach to
verifying the positivity of coefficients of a polynomial.

Let
$$
r(x) = \sqrt{\frac{1+x}{1-x}}.
$$
By induction, one can get
$$
(xD_x)^n(r(x)) = \frac{\mathscr{R}_n(x)}{(1-x)^n(1+x)^{n-1}\sqrt{1-x^2}},
$$
where $\mathscr{R}_n(x) = \sum_{k=0}^{2n-1}\mathscr{R}(n,k)x^k$.
It is easy to know that the polynomial $\mathscr{R}_n(x)$ satisfies
the recurrence relation
\begin{equation}\label{rec+gf+alt+run+Stir+Perm}
     \mathscr{R}_{n+1}(x)
  = (2nx+1)x\mathscr{R}_n(x)+x(1-x^2)D_x\mathscr{R}_n(x)
\end{equation}
for $n \ge 0$, $\mathscr{R}_0(x)=1$ and $\mathscr{R}_1(x) = x$.
For the coefficient $\mathscr{R}(n,k)$, it counts the number of
a dual set of Stirling permutations of order $n$ with $k$
alternating runs, see \cite{MW16}. In addition, in \cite{MW16},
it was found that this polynomial $\mathscr{R}_n(x)$ does not
have only real zeros and proposed the following conjecture.

\begin{conj}\emph{\cite[Conjecture 4.1]{MW16}}\label{conj+MW}
The polynomial $\mathscr{R}_n(x)$ in (\ref{rec+gf+alt+run+Stir+Perm})
is Hurwitz stable for $n \in \mathbb{N}$.
\end{conj}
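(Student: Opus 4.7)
The plan is to recognize the recurrence \eqref{rec+gf+alt+run+Stir+Perm} as a special case of the master recurrence \eqref{Rec+main} and then invoke the paper's Hurwitz stability criterion, Theorem \ref{thm+HS}. Reading off the parameters from \eqref{rec+gf+alt+run+Stir+Perm} one obtains
$$\alpha_n = 2n,\quad \beta_n = 1,\quad \gamma_n = 0,\quad \mu_n = -1,\quad \nu_n = 0,\quad \varphi_n = 1,\quad \psi_n = 0,$$
so in particular $\nu_n = \psi_n = 0$, which is exactly the regime handled by Theorem \ref{thm+HS}. The argument will be an induction on $n$ with trivial base case $\mathscr{R}_0(x) = 1$, and the inductive step reduces to verifying the hypotheses of that theorem for these specific parameters.

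Next, I would compute the two companion polynomials defined in \eqref{eq+FG}. Since $\deg(\mathscr{R}_n(x)) = 2n-1$ for $n \geq 1$, one gets
$$F(x) = 2nx^2 + x = x(2nx+1),\qquad G(x) = x^3 + x^2 + (2n-1)x = x\bigl(x^2 + x + (2n-1)\bigr).$$
A quick check confirms that both polynomials are individually Hurwitz stable: besides the shared root at $x = 0$, the remaining zero of $F(x)$ is $-\tfrac{1}{2n}$, while the remaining zeros of $G(x)$ are the complex conjugate pair $-\tfrac{1}{2} \pm \tfrac{i}{2}\sqrt{8n-5}$ for every $n \geq 1$. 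All coefficients of $F$ and $G$ are also nonnegative, which is the first piece of input that any reasonable Hurwitz-preservation criterion will demand.

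The main step, and the principal obstacle, is verifying the compatibility condition between $F$ and $G$ imposed by Theorem \ref{thm+HS}, which plays the role of the interlacing condition $F \ll G$ used in the real stability Theorem \ref{thm+FG}. Via the symbol identity $T[(x+y)^{m_n}] = (x+y)^{m_n-1}\bigl[G(x) + F(x)y\bigr]$ derived in the proof of Theorem \ref{thm+FG}, this amounts to checking Hurwitz stability of the bivariate pencil $G(x) + yF(x)$ in both variables, which in turn reduces to showing that $G(x) + \lambda F(x)$ stays Hurwitz stable as $\lambda$ ranges over an appropriate real interval. Because the pencil factors as $x\bigl[x^2 + (1+2n\lambda)x + (2n-1+\lambda)\bigr]$, this collapses to the elementary Routh-Hurwitz conditions $1 + 2n\lambda > 0$ and $2n-1+\lambda > 0$; matching these against the precise hypothesis of Theorem \ref{thm+HS} and combining with the induction hypothesis then closes the proof and resolves Conjecture \ref{conj+MW}.
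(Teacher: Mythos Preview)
Your overall approach matches the paper's exactly: identify the parameters in the recurrence and invoke Theorem~\ref{thm+HS}(2) with $\bm\alpha=2n$, $\bm\beta=1$, $\bm\gamma=0$, $\bm\mu=-1$, $\bm\varphi=1$, $m_n=2n-1$, and base case $\mathscr{R}_0=1$. The paper's proof is then a single sentence, because the hypotheses of Theorem~\ref{thm+HS}(2) are just the sign conditions $\bm\beta,\bm\gamma,\bm\varphi\ge 0$ together with $\bm\alpha\ge -m_n\bm\mu\ge 0$, i.e.\ $2n\ge 2n-1\ge 0$, and these are immediate.

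Your detour through $F(x)$, $G(x)$ and the symbol $T[(x+y)^{m_n}]$ is where things go astray. Those objects belong to the \emph{real} stability framework (Theorems~\ref{thm+FG} and~\ref{thm+RS+Peter}), not the Hurwitz one. For Hurwitz stability, Theorem~\ref{thm+HS+peter} requires the symbol $T[(1+xy)^{m_n}]$, and the proof of Theorem~\ref{thm+HS} analyzes that object directly via a real-part estimate rather than via any pencil $G+yF$. So the ``compatibility condition between $F$ and $G$'' you describe is not what Theorem~\ref{thm+HS} actually imposes, and the Routh--Hurwitz check on $G(x)+\lambda F(x)$ is neither needed nor the right criterion here. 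Once you drop that detour and simply verify the coefficient inequalities above, the proof is complete and identical to the paper's.
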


It is natural to study the Hurwitz stability of combinatorial
polynomials. In the following, we will consider the Hurwitz
stability of $T_n(x)$ in (\ref{Rec+main}) with $\bm{\nu}=\bm{\psi}=0$,
i.e., satisfying the following recurrence relation:
\begin{equation}\label{rec+T+HS}
T_{n+1}(x)=(\bm{\alpha} x^2+\bm{\beta} x+\bm{\gamma})T_{n}(x)+(\bm{\mu} x^3+\bm{\varphi}
x)D_xT_{n}(x),
\end{equation}
where all $\bm{\alpha}, \bm{\beta}, \bm{\gamma}, \bm{\mu}, \bm{\varphi}$
are real sequences in $\mathbb{R}$. In order to show the Hurwitz stability
of $T_n(x)$, we need the following characterization of linear operators
preserving Hurwitz stability of multivariate polynomials, see Borcea and
Br\"and\'en \cite[Remark 7.1]{BB09}.

\begin{thm}\label{thm+HS+peter}
For $n \in \mathbb{N}$, let $\mathbb{T} : \mathbb{C}_n [z] \rightarrow
\mathbb{C}[z]$ be a linear operator. Then $\mathbb{T}$ preserves Hurwitz
stability if and only if either
\begin{itemize}
\item [\rm (a)] $\mathbb{T}$ has range of dimension at most one and is of
                the form $\mathbb{T} (f) = \alpha(f)P$, where $\alpha$ is
                a linear functional on $\mathbb{C}_n [z]$ and $P$ is a
                Hurwitz stable polynomial, or
\item [\rm (b)] the bivariate polynomial
$$
   \mathbb{T}[(1+zw)^n]
:= \sum\limits_{k \le n} \binom{n}{k}\mathbb{T}(z^{k}) w^{k}
$$
\end{itemize}
is Hurwitz stable.
\end{thm}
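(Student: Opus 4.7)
My plan is to reduce Theorem~\ref{thm+HS+peter} to the standard upper-half-plane characterization of stability-preserving operators on $\mathbb{C}_n[z]$ (the complex-coefficient analog of Theorem~\ref{thm+RS+Peter}, also proved in \cite{BB09}) via a conformal rotation that interchanges the upper and right half-planes. The starting observation is that $\Re(-iz)=\Im(z)$, so the linear isomorphism $\Phi:\mathbb{C}_n[z]\to\mathbb{C}_n[z]$ defined by $\Phi(f)(z):=f(-iz)$ carries upper-half-plane stable polynomials bijectively onto Hurwitz stable polynomials. Hence $\mathbb{T}$ preserves Hurwitz stability if and only if the conjugated operator $\widetilde{\mathbb{T}}:=\Phi^{-1}\circ\mathbb{T}\circ\Phi$ preserves upper-half-plane stability.

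Applying the complex Borcea--Br\"and\'en characterization to $\widetilde{\mathbb{T}}$ yields two alternatives: either $\widetilde{\mathbb{T}}(f)=\alpha(f)\widetilde{P}$ with $\widetilde{P}$ stable and $\alpha$ a linear functional on $\mathbb{C}_n[z]$, or the bivariate algebraic symbol $\widetilde{\mathbb{T}}[(z+w)^n]$ is stable. A key point distinguishing the complex from the real setting is that the rank-two degeneracy of Theorem~\ref{thm+RS+Peter}, phrased in terms of the interlacing cone $\{g:g\ll f\}$, is intrinsically real-rooted and has no analog over $\mathbb{C}$; the degenerate alternative therefore collapses to rank one, matching alternative (a) of Theorem~\ref{thm+HS+peter}.

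It then remains to translate both alternatives back through $\Phi$. Expanding $\Phi[(z+w)^n]=(-iz+w)^n$, applying $\mathbb{T}$, and then $\Phi^{-1}$, yields an explicit formula for $\widetilde{\mathbb{T}}[(z+w)^n]$ in terms of the polynomials $\mathbb{T}(z^k)$. Successive substitutions $z\mapsto -iz$ and $w\mapsto -iw$ convert upper-half-plane stability in $(z,w)$ into Hurwitz stability in $(z,w)$, producing the equivalence between stability of $\widetilde{\mathbb{T}}[(z+w)^n]$ and Hurwitz stability of $\mathbb{T}[(z+w)^n]$. Since $w\mapsto 1/w$ preserves the right half-plane and $w^n\,\mathbb{T}[(z+1/w)^n]=\mathbb{T}[(1+zw)^n]$, this is in turn equivalent to Hurwitz stability of $\mathbb{T}[(1+zw)^n]=\sum_{k\le n}\binom{n}{k}\mathbb{T}(z^k)w^k$, giving alternative (b). The rank-one form for $\widetilde{\mathbb{T}}$ translates directly into the rank-one form for $\mathbb{T}$, with a Hurwitz stable polynomial $P=\Phi^{-1}(\widetilde{P})$.

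The principal obstacle is not the conformal bookkeeping but rather justifying that in the complex setting the degenerate case genuinely collapses to rank one. Concretely this requires producing, for any pair of linearly independent stable polynomials over $\mathbb{C}$, a complex linear combination that fails to be stable---a fact with no real-variable counterpart, reflecting the absence of any natural convex cone structure on the set of complex stable polynomials. Once this ``rigidity'' for complex stability is in hand, the remaining symbol identification via successive conformal substitutions is purely mechanical.
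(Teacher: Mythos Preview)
The paper does not supply its own proof of this theorem: it is quoted verbatim from Borcea and Br\"and\'en \cite[Remark 7.1]{BB09} as an established result, and the paper uses it as a black box. Your reduction by rotation is exactly the argument Borcea and Br\"and\'en indicate in that remark, so there is nothing to compare against in the present paper.

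A minor correction: with $\Phi(f)(z)=f(-iz)$ and $\Re(-iz)=\Im(z)$, the map $\Phi$ sends Hurwitz stable polynomials to upper-half-plane stable ones, not the other way around as you state (if $\Im(z)>0$ then $-iz$ lies in the right half-plane, so $f(-iz)\neq 0$ for $f$ Hurwitz stable). Since $\Phi$ is a bijection this is harmless---just swap the roles of $\Phi$ and $\Phi^{-1}$. Your observation that the degenerate alternative collapses to rank one over $\mathbb{C}$ is correct and is precisely the content of the complex-coefficient characterization \cite[Theorem~1.1]{BB09}; you need not reprove the ``rigidity'' fact you flag as an obstacle, since it is already part of that theorem and can be cited directly. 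The remaining symbol bookkeeping (substituting $w\mapsto 1/w$ and clearing $w^n$) is routine.
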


Our result for Hurwitz stability can be presented as follows.

\begin{thm}\label{thm+HS}
Let $T_n(x)$ be defined by \eqref{rec+T+HS} with all $\bm{\beta}, \bm{\gamma},
\bm{\varphi} \ge 0$ and $T_{n_0}(x)$ be Hurwitz stable. If one of the
followings is true,
\begin{itemize}
\item [\rm (1)] $\deg(T_n(x))=n$ and $\bm{\alpha}=-n\bm{\mu} \ge0$,
\item [\rm (2)] $\deg(T_n(x))=m_n$ $(m_n \neq n)$ and $\bm{\alpha}\ge-m_n\bm{\mu} \ge 0$,
\end{itemize}
then $T_n(x)$ is Hurwitz stable for $n \geq n_0$.
\end{thm}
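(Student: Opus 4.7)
The plan is to induct on $n \geq n_0$, using Theorem \ref{thm+HS+peter} to show that the linear operator $T$ from \eqref{Ope+main} (specialized to $\bm{\nu}=\bm{\psi}=0$) preserves Hurwitz stability on $\mathbb{C}_{m_n}[x]$. Since $T_{n_0}(x)$ is already Hurwitz stable by hypothesis, the inductive step reduces by Theorem \ref{thm+HS+peter}(b) to verifying Hurwitz stability of the bivariate symbol $T[(1+xw)^{m_n}]$.

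A direct expansion, parallel to the computation in the proof of Theorem \ref{thm+FG}, yields
$$
T[(1+xw)^{m_n}] = (1+xw)^{m_n-1}\bigl[A(x) + xw\,B(x)\bigr],
$$
where $A(x) = \bm{\alpha}x^2+\bm{\beta}x+\bm{\gamma}$ and $B(x) = (\bm{\alpha}+m_n\bm{\mu})x^2+\bm{\beta}x+(\bm{\gamma}+m_n\bm{\varphi})$. The factor $(1+xw)^{m_n-1}$ is Hurwitz stable because its only zero $x=-1/w$ satisfies $\Re(x)<0$ whenever $\Re(w)>0$, so the problem reduces to the Hurwitz stability of $h(x,w) := A(x)+xwB(x)$. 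A key observation is that both hypotheses (1) and (2) force $\bm{\mu}\leq 0$, $\bm{\alpha}+m_n\bm{\mu}\geq 0$, and $\bm{\gamma}+m_n\bm{\varphi}\geq 0$, so $A$ and $B$ are polynomials of degree at most two with nonnegative real coefficients; in particular $B(x)\neq 0$ for $\Re(x)>0$.

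Viewing $h(x,w)$ as linear in $w$, Hurwitz stability of $h$ is equivalent to requiring that the rational function
$$
\phi(x) := \frac{A(x)}{xB(x)}
$$
satisfy $\Re(\phi(x))\geq 0$ for all $x$ with $\Re(x)>0$, i.e., that $\phi$ be a \emph{positive real function}. By the standard characterization of such functions, this reduces to checking that the poles of $\phi$ lie in the closed left half plane with nonnegative real residues at any imaginary-axis poles, and that $\Re(\phi(it))\geq 0$ for $t\in\mathbb{R}$. The pole analysis is routine from the nonnegative-coefficient structure of $A$ and $B$, with the simple pole of $\phi$ at $x=0$ having residue $\bm{\gamma}/(\bm{\gamma}+m_n\bm{\varphi})\geq 0$. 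A short calculation then gives the boundary identity
$$
\Re(\phi(it)) = \frac{m_n\,\bm{\beta}\,t^{2}\bigl(\bm{\varphi}-\bm{\mu}\,t^{2}\bigr)}{|\,itB(it)\,|^{2}},
$$
which is manifestly nonnegative since $\bm{\beta},\bm{\varphi}\geq 0$ and $\bm{\mu}\leq 0$. Cases (1) and (2) enter only through the degree of $B$---linear in (1) since $\bm{\alpha}+m_n\bm{\mu}=0$, quadratic in (2)---and this does not affect the boundary identity.

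The main technical obstacle is the degenerate subcase of (2) where $\bm{\beta}=0$, in which $B$ may acquire simple zeros on the imaginary axis at $\pm i\sqrt{(\bm{\gamma}+m_n\bm{\varphi})/(\bm{\alpha}+m_n\bm{\mu})}$. Here a separate residue computation is needed to confirm that the corresponding residues of $\phi$ are nonnegative; this again follows from the nonnegativity of the remaining parameters. Once $\phi$ is shown to be positive real, $h(x,w)$ is Hurwitz stable, hence so is $T[(1+xw)^{m_n}]$, and the induction closes.
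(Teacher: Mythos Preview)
Your argument is correct, but it takes a genuinely different route from the paper's. After reaching the same factorization $T[(1+xy)^{m_n}]=(1+xy)^{m_n-1}[A(x)+xyB(x)]$, the paper pulls out one more factor of $(1+xy)x$ and writes, in case~(1),
\[
T[(1+xy)^{n}]=(1+xy)^{n}\,x\left(\bm{\beta}+\frac{\bm{\gamma}}{x}+\frac{\bm{\alpha}}{\tfrac{1}{x}+y}+\frac{n\bm{\varphi}}{x+\tfrac{1}{y}}\right),
\]
and in case~(2) the same expression plus the extra summand $(\bm{\alpha}+m_n\bm{\mu})x-\dfrac{m_n\bm{\mu}}{\tfrac{1}{x}+y}$ in place of the $\bm{\alpha}$-term. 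Each summand visibly has nonnegative real part whenever $\Re(x),\Re(y)>0$ (using only $\Re(z)>0\Rightarrow\Re(1/z)>0$), so the bracket cannot vanish there. This is entirely elementary and handles all parameter configurations at once.

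Your route instead studies $h(x,w)=A(x)+xwB(x)$ via the rational function $\phi=A/(xB)$ and invokes the classical characterization of positive real functions. This is valid, and your boundary identity $\Re\phi(it)=m_n\bm{\beta}\,t^{2}(\bm{\varphi}-\bm{\mu} t^{2})/|itB(it)|^{2}$ is correct, as is the residue check at the imaginary zeros of $B$ when $\bm{\beta}=0$. One point you omitted: the PR test also requires controlling $\phi$ at infinity. In case~(1) with $\bm{\alpha}>0$ and $\bm{\beta}=0$ the function $\phi$ has a simple pole there, with residue $\bm{\alpha}/(\bm{\gamma}+n\bm{\varphi})\ge 0$, so nothing fails, but it should be mentioned. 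The trade-off is clear: your approach imports external machinery and needs several sub-case verifications (pole at $0$, imaginary-axis zeros of $B$, behaviour at $\infty$), whereas the paper's extra factorization disposes of everything in a single line.
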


\begin{proof}

We will present the proof by induction on $n$. By the Hurwitz stable
assumption of $T_{n_0}(x)$, then the statement holds for $n=n_0$.
Let $T = (\bm{\alpha} x^2+\bm{\beta} x+\bm{\gamma})I + (\bm{\mu} x^3+\bm{\varphi} x)D_x$. The
statement for $n\geq n_0+1$ is immediate if the operator $T$
preserves Hurwitz stability. In what follows, we will prove that $T$
preserves Hurwitz stability according to two different cases of
$\deg(T_n(x))$.

(1) If $\deg(T_n(x))=n$, then by \eqref{rec+T+HS}, we have
$\bm{\alpha}+n\bm{\mu}=0$. By Theorem \ref{thm+HS+peter}, it suffices
to show that
\begin{eqnarray*}
  T(1+xy)^n &=& (1+xy)^{n-1}\left[ \bm{\alpha} x^2
                 +(\bm{\beta} x+\bm{\gamma})(1+xy)+n\bm{\varphi} xy \right]                    \\
            &=& (1+xy)^nx\left(\bm{\beta}+\frac{\bm{\gamma}}{x}
                 +\frac{\bm{\alpha} x}{1+xy}+\frac{n\bm{\varphi} y}{1+xy}\right)
\end{eqnarray*}
is Hurwitz stable. Since $(1+xy)^nx$ is Hurwitz stable by
definition, we need to prove that
\begin{equation}\label{fun}
\bm{\beta}+\frac{\bm{\gamma}}{x}+\frac{\bm{\alpha} x}{1+xy}+\frac{n\bm{\varphi} y}{1+xy}
\end{equation}
is Hurwitz stable. Let $\Re(z)$ denote the real part of $z$, where
$z \in \mathbb{C}$. Note that
$$
   \Re\left(\bm{\beta}+\frac{\bm{\gamma}}{x}+\frac{\bm{\alpha} x}{1+xy}
   +\frac{n\bm{\varphi} y}{1+xy}\right)
 = \bm{\beta}+\Re\left(\frac{\bm{\gamma}}{x}\right)
   +\Re\left(\frac{\bm{\alpha}}{\frac{1}{x}+y}\right)
   +\Re\left(\frac{n\bm{\varphi}}{x+\frac{1}{y}}\right).
$$
Whenever $\Re(x) > 0$ and $\Re(y) > 0$, we have $\Re(\frac{1}{x}) > 0$
and $\Re(\frac{1}{y}) > 0$. In consequence, it is obvious that
$$
\Re\left(\frac{\bm{\gamma}}{x}\right)                \ge0,   \quad
\Re\left(\frac{\bm{\alpha}}{\frac{1}{x}+y}\right)    \ge 0,  \quad
\Re\left(\frac{n\bm{\varphi}}{x+\frac{1}{y}}\right)  \ge 0,
$$
since all $\bm{\alpha},\bm{\gamma}, \bm{\varphi} \ge 0$. Hence, by $\bm{\beta} \ge 0$,
the function in \eqref{fun} does not have zeros in the right
half-plane, and thus $T(1+xy)^n$ is Hurwitz stable. In consequence,
$T$ preserves Hurwitz stability.

(2) It is similar to (1). We have
\begin{eqnarray*}
       T(1+xy)^{m_n}
  &=& (1+xy)^{m_n-1}\left[(\bm{\alpha} x^2+\bm{\beta} x+\bm{\gamma})(1+xy)+m_n\bm{\mu} x^3y
      +m_n\bm{\varphi} xy \right]                                                    \\
  &=& (1+xy)^{m_n}x\left[(\bm{\alpha}+m_n\bm{\mu})x+\bm{\beta}+\frac{\bm{\gamma}}{x}
      +\frac{m_n\bm{\varphi} y}{1+xy}-\frac{m_n\bm{\mu} x}{1+xy}\right]              \\
  &=& (1+xy)^{m_n}x\left[(\bm{\alpha}+m_n\bm{\mu})x+\bm{\beta}+\frac{\bm{\gamma}}{x}
      +\frac{m_n\bm{\varphi}}{x+\frac{1}{y}}-\frac{m_n\bm{\mu}}{\frac{1}{x}+y}\right]
\end{eqnarray*}
is Hurwitz stable in terms of the nonnegativity of $\bm{\beta}, \bm{\gamma}, -\bm{\mu},
\bm{\varphi}$ and $\bm{\alpha}+m_n\bm{\mu}$. Hence $T$ preserves Hurwitz stability.
\end{proof}

As an immediate application of Theorem \ref{thm+HS}, we verify
Conjecture \ref{conj+MW} as follows.
\begin{prop}\label{prop+MW}
The polynomial $\mathscr{R}_n(x)$ in (\ref{rec+gf+alt+run+Stir+Perm})
is Hurwitz stable for $n \in \mathbb{N}$.
\end{prop}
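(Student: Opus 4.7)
The plan is to realize $\mathscr{R}_n(x)$ as an instance of the general template (\ref{rec+T+HS}) and then invoke part (2) of Theorem~\ref{thm+HS}; only a small base case will need to be cleared by hand.

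First I would expand $(2nx+1)x = 2nx^2 + x$ and $x(1-x^2) = -x^3 + x$ so that (\ref{rec+gf+alt+run+Stir+Perm}) takes the shape of (\ref{rec+T+HS}) with $\bm{\alpha} = 2n,\ \bm{\beta} = 1,\ \bm{\gamma} = 0,\ \bm{\mu} = -1,\ \bm{\varphi} = 1$ and $\bm{\nu} = \bm{\psi} = 0$. The sign requirements $\bm{\beta},\bm{\gamma},\bm{\varphi} \geq 0$ appearing in Theorem~\ref{thm+HS} are then immediate.

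Next, setting $m_n := \deg(\mathscr{R}_n(x))$, induction on (\ref{rec+gf+alt+run+Stir+Perm}) gives $m_n = 2n - 1$ for $n \geq 1$. In particular $m_n \neq n$ as soon as $n \geq 2$, and the chain required by case~(2), namely $\bm{\alpha} \geq -m_n\bm{\mu} \geq 0$, reduces to $2n \geq 2n-1 \geq 0$, which is obvious. Hence Theorem~\ref{thm+HS}(2) propagates Hurwitz stability from step $n$ to step $n+1$ for every $n \geq 2$.

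For the base of the induction I would compute directly $\mathscr{R}_2(x) = (2x+1)x\cdot x + x(1-x^2)\cdot 1 = x(x^2 + x + 1)$; its zeros $0$ and $(-1 \pm i\sqrt{3})/2$ all lie in the closed left half-plane, so $\mathscr{R}_2(x)$ is Hurwitz stable. Taking $n_0 = 2$ in Theorem~\ref{thm+HS}(2) then covers all $n \geq 2$, while the trivially Hurwitz-stable $\mathscr{R}_0(x) = 1$ and $\mathscr{R}_1(x) = x$ dispatch the two exceptional values of $n$.

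The only subtle point is a bookkeeping gap at $n = 1$: there $m_1 = 1 = n$, so case~(2) does not apply, while case~(1) would demand $\bm{\alpha} = -n\bm{\mu} = 1$, whereas in fact $\bm{\alpha} = 2$. This is precisely why the step from $\mathscr{R}_1$ to $\mathscr{R}_2$ has to be handled by the explicit factorization above rather than by the theorem; beyond this minor detail I do not anticipate any substantive obstacle.
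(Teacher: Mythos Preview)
Your proposal is correct and follows essentially the same route as the paper: identify the parameters $\bm{\alpha}=2n$, $\bm{\beta}=1$, $\bm{\gamma}=0$, $\bm{\mu}=-1$, $\bm{\varphi}=1$, observe $m_n=2n-1$, and invoke Theorem~\ref{thm+HS}(2). In fact you are slightly more careful than the paper, which simply starts from $\mathscr{R}_0(x)=1$ and applies case~(2) uniformly without addressing the $n=1$ boundary where $m_1=n$; the hypothesis $m_n\neq n$ is never actually used in the proof of case~(2), so the paper's shortcut is harmless, but your explicit treatment of $\mathscr{R}_2$ is cleaner.
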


\begin{proof}

Obviously, $\mathscr{R}_0(x)=1$ is Hurwitz stable and
$\deg(\mathscr{R}_n(x)) = 2n-1$ due to \eqref{rec+gf+alt+run+Stir+Perm}.
Taking $\bm{\alpha} = 2n, \bm{\beta}=1, \bm{\gamma}=0, \bm{\mu}=-1, \bm{\varphi}=1$
and $m_n=2n-1$ in (2) of Theorem \ref{thm+HS}, we get that the polynomial
$\mathscr{R}_n(x)$ is Hurwitz stable.
\end{proof}

For the classical Eulerian polynomial $A_n(x)$ in
(\ref{Rec+Euler+A}), it is well known that $A_n(x)$ has only real
zeros and $A_{n-1}(x) \ll A_{n}(x)$. Furthermore, it is an
interesting problem to consider the distribution of zeros for some
linear combinations of $A_{n-1}(x)$ and $A_{n}(x)$.  In particular,
Yang and Zhang \cite{YZ15} proved that the following linear
combination:
$$
(x + 1)A_{n-1}(x) + kxA_{n-2}(x)
$$
is Hurwitz stable for $n \ge 2$ and $k \ge -n$. In addition, in
\cite{YZ15}, this Hurwitz stability result played an important role
in proving the interlacing property of the Eulerian polynomials of
between type $D$ and affine type $B$, and a conjecture about the
half Eulerian polynomials of type $B$ and type $D$ proposed by
Hyatt in \cite{Hya16}. As an extension, we will consider the
Hurwitz stability of the next linear combination:
\begin{equation}\label{rec+linear+comb+T}
   (\bm{\varphi}-\bm{\nu} x)\bm{\rho} T_{n+1}(x)
 + [\bm{\varphi} \bm{\eta} x+(\bm{\varphi}-\bm{\gamma})\bm{\varphi} \bm{\rho}]T_{n}(x)
\end{equation}
for $T_{n}(x)$ in (\ref{Rec+main}) with $\bm{\alpha}=\bm{\mu}=\bm{\psi}=0$,
i.e., satisfying the recurrence relation
\begin{equation}\label{rec+gf+two+term+T}
    T_{n+1}(x)
 =  (\bm{\beta} x+\bm{\gamma})T_{n}(x) + (\bm{\nu} x^2+\bm{\varphi} x)D_xT_{n}(x).
\end{equation}
Here $\bm{\rho}$ and $\bm{\eta}$ are abbreviated notation for real sequences
in $\mathbb{R}$. As a consequence of Theorem \ref{thm+HS}, we present the
Hurwitz stability for the linear combination in (\ref{rec+linear+comb+T})
as follows.

\begin{prop}\label{prop+HS+linear+comb}
Let $\deg(T_n(x))=m_n$ and both $\bm{\varphi}$ and $\bm{\rho}$ be nonnegative sequences.
If $(\bm{\beta}+m_n\bm{\nu})\bm{\nu} \le 0$ and $(\bm{\beta} \bm{\varphi}-\bm{\gamma}
\bm{\nu})\bm{\rho}+\bm{\varphi} \bm{\eta} \ge 0$, then the linear combination in
(\ref{rec+linear+comb+T}) is Hurwitz stable for any $n \in
\mathbb{N}$.
\end{prop}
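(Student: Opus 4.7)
The plan is to rewrite the linear combination \eqref{rec+linear+comb+T} as a single linear operator of the form \eqref{rec+T+HS} applied to $T_n(x)$, and then invoke Theorem \ref{thm+HS}(2) for that operator. The algebraic core is thus to substitute the recurrence \eqref{rec+gf+two+term+T} and identify which operator emerges.

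Executing the substitution $T_{n+1}(x)=(\bm{\beta}x+\bm{\gamma})T_n(x)+(\bm{\nu}x^2+\bm{\varphi}x)D_xT_n(x)$ into \eqref{rec+linear+comb+T} and collecting the coefficients of $T_n(x)$ and $D_xT_n(x)$ separately, the linear combination becomes $\widehat{F}(x)\,T_n(x)+\widehat{G}(x)\,D_xT_n(x)$, where
$$\widehat{F}(x) = -\bm{\beta}\bm{\nu}\bm{\rho}\,x^2 + \bigl[(\bm{\beta}\bm{\varphi}-\bm{\gamma}\bm{\nu})\bm{\rho}+\bm{\varphi}\bm{\eta}\bigr]x + \bm{\varphi}^2\bm{\rho}$$
and $\widehat{G}(x) = -\bm{\rho}\bm{\nu}^2 x^3 + \bm{\rho}\bm{\varphi}^2 x$. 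The $\bm{\gamma}\bm{\varphi}\bm{\rho}$ contribution from the two summands of \eqref{rec+linear+comb+T} cancels in the constant term of $\widehat{F}(x)$, and $\widehat{G}(x)$ carries no $x^2$ or constant piece; this is precisely what places the resulting operator in the subclass \eqref{rec+T+HS} rather than the more general \eqref{Rec+main}, so that Theorem \ref{thm+HS} applies directly.

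It then remains to verify the hypotheses of Theorem \ref{thm+HS}(2) for the operator $\widehat{F}(x)I+\widehat{G}(x)D_x$ acting on $T_n(x)$ of degree $m_n$. With the identifications $\widehat{\bm{\alpha}}=-\bm{\beta}\bm{\nu}\bm{\rho}$, $\widehat{\bm{\beta}}=(\bm{\beta}\bm{\varphi}-\bm{\gamma}\bm{\nu})\bm{\rho}+\bm{\varphi}\bm{\eta}$, $\widehat{\bm{\gamma}}=\bm{\varphi}^2\bm{\rho}$, $\widehat{\bm{\mu}}=-\bm{\rho}\bm{\nu}^2$ and $\widehat{\bm{\varphi}}=\bm{\rho}\bm{\varphi}^2$, the signs $\widehat{\bm{\beta}},\widehat{\bm{\gamma}},\widehat{\bm{\varphi}}\ge 0$ follow from the assumed inequality $(\bm{\beta}\bm{\varphi}-\bm{\gamma}\bm{\nu})\bm{\rho}+\bm{\varphi}\bm{\eta}\ge 0$ together with the nonnegativity of $\bm{\varphi}$ and $\bm{\rho}$, and $-m_n\widehat{\bm{\mu}}=m_n\bm{\rho}\bm{\nu}^2\ge 0$ is immediate. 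The remaining condition $\widehat{\bm{\alpha}}\ge -m_n\widehat{\bm{\mu}}$ unravels to $-\bm{\nu}(\bm{\beta}+m_n\bm{\nu})\bm{\rho}\ge 0$, which is exactly $(\bm{\beta}+m_n\bm{\nu})\bm{\nu}\le 0$ combined with $\bm{\rho}\ge 0$. Theorem \ref{thm+HS}(2) thus guarantees that the operator preserves Hurwitz stability, so the Hurwitz stability of $T_n(x)$ transfers to the linear combination.

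The main technical obstacle is the careful bookkeeping in the substitution step so that the reduction to the shape \eqref{rec+T+HS} is transparent and the target hypotheses line up cleanly with the two inequalities in the statement. Implicit in the conclusion is the Hurwitz stability of $T_n(x)$ itself, which in concrete applications is supplied separately, typically by an earlier instance of Theorem \ref{thm+HS}.
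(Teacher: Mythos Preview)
Your proposal is correct and follows essentially the same route as the paper. The only cosmetic difference is that the paper first multiplies the linear combination by $x$ and recognises the result as an operator of the form \eqref{rec+T+HS} applied to $xT_n(x)$ (of degree $m_n+1$, with vanishing constant term $\bm{\gamma}'=0$), whereas you apply the operator directly to $T_n(x)$ (of degree $m_n$, with $\widehat{\bm{\gamma}}=\bm{\varphi}^2\bm{\rho}$); the resulting sign checks against Theorem~\ref{thm+HS}(2) are identical, and your version is arguably a touch cleaner since it skips the auxiliary factor of $x$.
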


\begin{proof}

By \eqref{rec+gf+two+term+T}, for the linear combination in
(\ref{rec+linear+comb+T}), we have
\begin{eqnarray*}
   & & x\left[(\bm{\varphi}-\bm{\nu} x)\bm{\rho} T_{n+1}(x)
      +[\bm{\varphi} \bm{\eta} x+(\bm{\varphi}-\bm{\gamma})\bm{\varphi} \bm{\rho}]T_{n}(x)\right]\\
 = & &[(\bm{\nu}-\bm{\beta})\bm{\nu} \bm{\rho} x^2+((\bm{\beta} \bm{\varphi}-\bm{\gamma} \bm{\nu})\bm{\rho}
       +\bm{\varphi} \bm{\eta})x](xT_{n}(x))+(-\bm{\nu}^2\bm{\rho}  x^3
       + \bm{\varphi}^2 \bm{\rho} x )D_x(xT_{n}(x)).
\end{eqnarray*}

Then according to the assumption, the Hurwitz stability for the
linear combination in (\ref{rec+linear+comb+T}) follows from Theorem
\ref{thm+HS}.
\end{proof}

\begin{ex}[\textbf{Flower triangle}]
It is known that the flower triangle $[F_{n,k}]_{n,k\ge0}$ satisfies
the following recurrence relation (see \cite[A156920]{Slo}):
$$
F_{n,k} = (1+k)F_{n-1,k} + (2n - 2k + 1)F_{n-1,k-1},
$$
where $F_{0,0}=1$ and $F_{n,k}=0$ unless $0\le k \le n$. Then the
row-generating function $F_n(x)$ satisfies
$$
F_{n+1}(x) = [(2n+1)x+1]F_{n}(x)+x(1-2x)D_xF_{n}(x).
$$
Taking $m_n=n, \bm{\beta}=2n+1, \bm{\gamma}=1, \bm{\nu}=-2$ and
$\bm{\varphi}=1$ in \eqref{rec+gf+two+term+T}. If both $\bm{\rho}$
and $(2n+3)\bm{\rho}+\bm{\eta}$ are nonnegative sequences, then
$$
\bm{\rho}(2x+1)F_{n+1}(x)+\bm{\eta} xF_{n}(x)
$$
is Hurwitz stable for any $n \in \mathbb{N}$.

\end{ex}

\section{The Hurwitz stability of Tur\'an expressions }

In the end of the former section, we consider the Hurwitz stability of
certain linear combination. In this section, we mainly consider the
Hurwitz stability of a non-linear operator.

Given a polynomial sequence $\mathcal{P} = (\mathscr{P}_n(x))_{n \ge 0}$ with
$\deg(\mathscr{P}_n(x)) = n$, we denote the $n$th \emph{Tur\'an expression} by
$$
    \mathfrak{I}_n(\mathcal{\mathscr{P}}; x)
:= (\mathscr{P}_{n+1}(x))^2-\mathscr{P}_{n+2}(x)\mathscr{P}_n(x).
$$
The concept of Tur\'an expression owed to Tur\'an \cite{T50} who
found Tur\'an's inequalities concerning Legendre polynomial sequence
$\mathcal{P}$: $\mathfrak{I}_n(\mathcal{P}; x)\geq0$ for $x \in [-1, 1]$
and $n \in \mathbb{N}$. However, it was first published by Szeg\"o
\cite{Sz48}. We refer the reader to \cite{CGV15, Z18ejc} and references
therein for more information about Tur\'an's inequalities.
We say that $(\mathscr{P}_n(q))_{n \ge 0}$ is {\it $q$-log-concave}
(resp., {\it $q$-log-convex}) if all coefficients of
$\mathfrak{I}_n(\mathcal{P}; q)$ (resp., $-\mathfrak{I}_n(\mathcal{P}; q)$)
are nonnegative. The definition of the $q$-log-concavity was first
suggested by Stanley and that of the $q$-log-convexity was first
introduced Liu and Wang. Note the fact that if a univariate polynomial
is Hurwitz stable, then the signs of its all coefficients are same.
Thus the Hurwitz stability of a Tur\'an expression implies that the
original polynomial sequence is either $q$-log-concave or $q$-log-convex.

It is known that both the classical Eulerian polynomials and Bell
polynomials are $q$-log-convex \cite{LW07}. Moreover, their Tur\'an
expressions are Hurwitz stable. For many other combinatorial polynomials,
including the Eulerian polynomials of types $B$, Lah polynomials,
descent polynomials on segmented permutations, and so on, their
Tur\'an expressions are also Hurwitz stable, see \cite{CGV15,Fi06,
Z18ejc,Zhu20jcta,Zhu21}. In this section, we will derive a new
criterion for the Hurwitz stability of Tur\'an expression. Then
we apply this criterion to many combinatorial polynomials in a
unified manner. The following result for two interlacing polynomials
plays an important role in our proof.
\begin{lem}\emph{\cite[Lemma 1.20]{Fi06}}\label{lem+Fisk}
Let both $f(x)$ and $g(x)$ be standard real polynomials with only
real zeros. Assume that $\deg(f(x))=n$ and all real zeros of $f(x)$
are $r_1, \ldots, r_n$. If $\deg(g)=n-1$ and we write
$$g(x)=\sum_{i=1}^n\frac{c_if(x)}{x-r_i},$$
then $g\ll f$ if and only if all $c_i$ are nonnegative.
\end{lem}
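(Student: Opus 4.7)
The plan is to read off the coefficients $c_j$ by partial fractions and then match their signs against the interlacing pattern. Since the representation $g(x)=\sum_{i=1}^{n}c_i f(x)/(x-r_i)$ only spans a generic polynomial of degree $n-1$ when the $r_i$ are distinct, I would first assume simple zeros ordered as $r_1>r_2>\cdots>r_n$. Evaluating both sides at $r_j$, every summand with $i\neq j$ vanishes, while the $i=j$ term yields $c_j\prod_{i\neq j}(r_j-r_i)$. Consequently
$$
c_j \;=\; \frac{g(r_j)}{\prod_{i\neq j}(r_j-r_i)}, \qquad j=1,\dots,n.
$$

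Next I would determine the sign of the denominator. Among the $n-1$ factors $r_j-r_i$ with $i\neq j$, exactly $j-1$ are negative (those with $i<j$) and $n-j$ are positive. Hence $\prod_{i\neq j}(r_j-r_i)$ has sign $(-1)^{j-1}$, so the nonnegativity of every $c_j$ is equivalent to the sign pattern $(-1)^{j-1}g(r_j)\ge 0$ for $j=1,\dots,n$.

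The final step is to identify this sign pattern with $g\preceq f$, which in the present setting (standard polynomials, $\deg f=\deg g+1$) is exactly $g\ll f$. For the forward direction, if the zeros $s_1\ge\cdots\ge s_{n-1}$ of $g$ satisfy $r_{j+1}\le s_j\le r_j$, then $g$ (being standard) is positive on $(s_1,\infty)$ and alternates sign across each $s_j$, which forces $(-1)^{j-1}g(r_j)\ge 0$. Conversely, this sign alternation plus the intermediate value theorem produces a zero of $g$ in each closed interval $[r_{j+1},r_j]$, and the total count $n-1$ exhausts $\deg g$, giving the interlacing. The main subtlety is the boundary case where $f$ and $g$ share a zero (so some $g(r_j)=0$): this is harmless since $\preceq$ is defined with weak inequalities and the equality $c_j=0$ is consistent with $c_j\ge 0$. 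The case of repeated zeros of $f$, which falls outside the scope of the partial fraction identity, is the only real obstacle; it can be dispatched by perturbing the $r_i$ to be distinct, invoking the proven equivalence, and using continuity of the $c_j$ as functions of the zeros of $f$.
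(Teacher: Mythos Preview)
The paper does not supply its own proof of this lemma; it is quoted from Fisk's manuscript and used as a black box in the proof of Theorem~\ref{thm+Turan}. Your partial-fraction argument is the standard one and is correct when the $r_i$ are distinct: the evaluation $c_j=g(r_j)/\prod_{i\neq j}(r_j-r_i)$, the sign $(-1)^{j-1}$ of the denominator under the ordering $r_1>r_2>\cdots>r_n$, and the two directions linking the sign pattern $(-1)^{j-1}g(r_j)\ge 0$ to weak interlacing are all sound. (For the converse it helps to recall that the hypothesis already gives $g$ real-rooted, so you are only locating its $n-1$ zeros among the intervals $[r_{j+1},r_j]$, not manufacturing them.)

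The one soft spot is your final sentence. The coefficients $c_j$ are \emph{not} continuous as two zeros of $f$ coalesce---the denominator $\prod_{i\neq j}(r_j-r_i)$ tends to $0$---so a limiting argument at the level of the $c_j$ does not go through. More fundamentally, once $f$ has a repeated root the family $\{f(x)/(x-r_i)\}_{i=1}^n$ becomes linearly dependent, the representation of $g$ is no longer unique, and the ``only if'' direction is literally false: one can shift mass between identical summands to make some $c_i$ negative without changing $g$. The lemma should be read, and is only applied in this paper (see \eqref{quotient+T1}), under the tacit assumption that the $r_i$ are distinct; in that regime your argument is complete.
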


Let $(\mathscr{P}_n(x))_{n \ge 0}$ be a sequence of polynomials with
nonnegative coefficients and satisfy the recurrence relation
\begin{equation}\label{Rec+Turan}
  \mathscr{P}_{n+1}(x)=p_n(x)\mathscr{P}_n(x)+q(x)D_x\mathscr{P}_n(x),
\end{equation}
where $\deg(\mathscr{P}_{n}(x))=\deg(\mathscr{P}_{n-1}(x))+1$.
Denote by $\{r_k\}_{k=1}^n$ all zeros of $\mathscr{P}_n(x)$ and
define
\begin{equation}\label{dec+root}
(x-r_k)\left[p_n(x)-p_{n-1}(x)\right]+q(x):=h_n(x)\sum_{i=0}^3
a_{k_i}(x-r_k)^i
\end{equation}
for $1\leq k\leq n$, where $h_n(x)$ is a polynomial.

The main result of this section can be stated as follows.
\begin{thm}\label{thm+Turan}
Let $\mathscr{P}_n(x)$ be defined by \eqref{Rec+Turan} and
$\mathscr{P}_n(x) \ll \mathscr{P}_{n+1}(x)$. Assume that
$h_n(x)$ is Hurwitz stable for each $n$. If all elements of
$\bigcup_{k=1}^n\{-a_{k_3}, a_{k_1}, a_{k_0} \}$ have same sign,
and the right side of \eqref{dec+root} has same sign for
$1\leq k\leq n$ and $x > 0$, then $\mathfrak{I}_n(\mathcal{P}; x)$
is Hurwitz stable for
each $n$.
\end{thm}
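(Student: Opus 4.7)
The plan is to derive an explicit factorization of $\mathfrak{I}_{n-1}(\mathcal{P};x)=\mathscr{P}_n^2-\mathscr{P}_{n-1}\mathscr{P}_{n+1}$ of the form $-h_n(x)\,\mathscr{P}_n(x)^2\,\Phi(x)$, where $\Phi$ is a rational combination built from the $a_{j_i}$, and then to read off Hurwitz stability from the two sign hypotheses of the theorem. Working with $\mathfrak{I}_{n-1}$ rather than $\mathfrak{I}_n$ is natural, because then the zeros $r_k$ of $\mathscr{P}_n$ used in \eqref{dec+root} are the ones we need; the conclusion for each $n$ follows by reindexing.

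Substituting \eqref{Rec+Turan} for $\mathscr{P}_{n+1}$ and $\mathscr{P}_n$ in turn gives
$$\mathfrak{I}_{n-1}(x)=(p_{n-1}-p_n)\mathscr{P}_{n-1}\mathscr{P}_n+q\bigl[\mathscr{P}_{n-1}'\mathscr{P}_n-\mathscr{P}_{n-1}\mathscr{P}_n'\bigr].$$
By Lemma~\ref{lem+Fisk} applied to $\mathscr{P}_{n-1}\ll\mathscr{P}_n$ (a consequence of the hypothesis at the previous index), one has $\mathscr{P}_{n-1}(x)=\sum_{j=1}^n\beta_j Q_j(x)$ with $Q_j(x):=\mathscr{P}_n(x)/(x-r_j)$ and $\beta_j=\mathscr{P}_{n-1}(r_j)/\mathscr{P}_n'(r_j)$, and all $\beta_j$ share a common sign. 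A direct check using $\mathscr{P}_n=(x-r_j)Q_j$ yields $Q_j'\mathscr{P}_n-Q_j\mathscr{P}_n'=-Q_j^2$, so the Wronskian collapses to $-\sum_j\beta_j Q_j^2$, while $\mathscr{P}_{n-1}\mathscr{P}_n=\sum_j\beta_j(x-r_j)Q_j^2$. Combining and invoking \eqref{dec+root} produces the central identity
$$\mathfrak{I}_{n-1}(x)=-h_n(x)\sum_{j=1}^{n}\beta_j\,Q_j(x)^2\sum_{i=0}^{3}a_{j_i}(x-r_j)^i.$$

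Now $\mathscr{P}_n\ll\mathscr{P}_{n+1}$ forces $\mathscr{P}_n$ to be real rooted; since it has nonnegative coefficients, every $r_j\leq 0$, so $\mathscr{P}_n(z)\neq 0$ in $\{\Re z>0\}$. Together with the assumed Hurwitz stability of $h_n$, it suffices to show that the ``correction factor''
$$\Phi(z):=\sum_{j=1}^n\beta_j\!\left[\tfrac{a_{j_0}}{(z-r_j)^2}+\tfrac{a_{j_1}}{z-r_j}+a_{j_2}+a_{j_3}(z-r_j)\right]$$
does not vanish for $\Re z>0$. Setting $w_j=z-r_j$ with $\Re w_j>0$, a direct computation shows that the imaginary part of each summand (with $\beta_j$ removed) equals
$$-\Im(z)\!\left[\tfrac{2a_{j_0}\Re w_j}{|w_j|^4}+\tfrac{a_{j_1}}{|w_j|^2}-a_{j_3}\right].$$
The hypothesis that $\{-a_{k_3},a_{k_1},a_{k_0}\}_k$ all share one common sign makes every such bracket of one constant sign, and with $\beta_j$ of one common sign this yields $\Im\Phi(z)\neq 0$ whenever $\Im z\neq 0$ (in the degenerate case where every bracket vanishes, one must have $a_{j_0}=a_{j_1}=a_{j_3}=0$, reducing the central identity to a scalar multiple of $h_n\mathscr{P}_n^2$, which is trivially Hurwitz stable). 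On the positive real axis, Hurwitz stability of $h_n$ with positive leading coefficient gives $h_n(x)>0$, and then the second hypothesis---that the right side of \eqref{dec+root} has a common sign over $k$ for every $x>0$---makes every summand in $\Phi(x)$ of one common sign, so $\Phi(x)\neq 0$.

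The main obstacle is precisely this uniform sign analysis: one has to handle the imaginary-axis and positive-real-axis zeros of $\Phi$ separately and check that the two hypotheses of the theorem are exactly what is needed to rule out both, together with the borderline cases in which individual brackets or summands degenerate to zero. Once the nonvanishing of $\Phi$ is established, Hurwitz stability of $\mathfrak{I}_{n-1}$ follows at once from the factorization and Hurwitz stability of $h_n$ and $\mathscr{P}_n^2$.
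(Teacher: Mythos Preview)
Your proof is correct and follows essentially the same route as the paper. The paper writes the partial-fraction expansion $\mathscr{P}_{n-1}/\mathscr{P}_n=\sum_k t_k/(x-r_k)$ and differentiates it to obtain the same factorization $\mathscr{P}_n^2\,h_n(x)\sum_k t_k\bigl[a_{k_3}(x-r_k)+a_{k_2}+a_{k_1}/(x-r_k)+a_{k_0}/(x-r_k)^2\bigr]$, whereas you reach the identical expression via the Lagrange form $\mathscr{P}_{n-1}=\sum_j\beta_jQ_j$ and the Wronskian identity $Q_j'\mathscr{P}_n-Q_j\mathscr{P}_n'=-Q_j^2$; these are the same computation. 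The sign analysis on the open right half-plane---imaginary-part argument off the real axis, common-sign hypothesis on the positive real axis---is also the same, though you spell out the imaginary part more explicitly and treat the degenerate case $a_{j_0}=a_{j_1}=a_{j_3}=0$ that the paper glosses over. One cosmetic point: your appeal to ``positive leading coefficient'' of $h_n$ is unnecessary, since Hurwitz stability alone gives $h_n(x)\neq 0$ for $x>0$, which is all the real-axis step requires.
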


\begin{proof}
In terms of the hypothesis $\mathscr{P}_n(x) \ll \mathscr{P}_{n+1}(x)$,
all the zeros $r_k$ of $\mathscr{P}_n(x)$ are real and non-positive and by Lemma
\ref{lem+Fisk} we can write
\begin{eqnarray}\label{quotient+T1}
 \frac{\mathscr{P}_{n-1}(x)}{\mathscr{P}_n(x)}=\sum_{i=1}^n\frac{t_i}{x-r_i},
\end{eqnarray}
where all $t_i$ are nonnegative. Furthermore, we have
\begin{eqnarray}\label{quotient+T2}
   D_x\left(\frac{\mathscr{P}_{n-1}(x)}{\mathscr{P}_n(x)}\right)
 = \sum_{i=1}^n\frac{-t_i}{(x-r_i)^2}.
\end{eqnarray}

By \eqref{Rec+Turan}-\eqref{quotient+T2}, we get
\begin{eqnarray*}
  \mathfrak{I}_n(\mathcal{P}; x)
  &=& \left[p_n(x)\mathscr{P}_n(x)+q(x)D_x\mathscr{P}_n(x)\right]\mathscr{P}_{n-1}(x)
      - \mathscr{P}_n(x)\left[p_{n-1}(x)\mathscr{P}_{n-1}(x)+q(x)D_x\mathscr{P}_{n-1}(x)\right]              \\
  &=& \left[p_n(x)-p_{n-1}(x)\right]\mathscr{P}_n(x)\mathscr{P}_{n-1}(x)
      +q(x)\left[\mathscr{P}_{n-1}(x)D_x\mathscr{P}_n(x)-\mathscr{P}_{n}(x)D_x\mathscr{P}_{n-1}(x)\right]    \\
  &=& \mathscr{P}_n^2(x)\left[\left[p_n(x)-p_{n-1}(x)\right]\frac{\mathscr{P}_{n-1}(x)}
      {\mathscr{P}_n(x)}-q(x)D_x\left(\frac{\mathscr{P}_{n-1}(x)}{\mathscr{P}_n(x)}\right)\right]            \\
  &=& \mathscr{P}_n^2(x)\sum\limits_{k=0}^n \frac{t_k\left[(x-r_k)(p_n(x)
      -p_{n-1}(x))+q(x)\right]}{(x-r_k)^2}                                                                   \\
  &=& \mathscr{P}_n^2(x) h_n(x)\sum\limits_{k=0}^n t_k\left[a_{k_3}(x-r_k)
      + a_{k_2}+\frac{a_{k_1}}{x-r_k}+\frac{a_{k_0}}{(x-r_k)^2}\right].
\end{eqnarray*}

Obviously, $\mathscr{P}_n(x)$ and $h_n(x)$ are Hurwitz stable. Thus we will
consider the following function:
\begin{equation}\label{Des+tl}
  a_{k_3}(x-r_k)+a_{k_2}+\frac{a_{k_1}}{x-r_k}+\frac{a_{k_0}}{(x-r_k)^2}.
\end{equation}

Without loss of generality, we assume that
$\bigcup_{k=1}^n\{-a_{k_3}, a_{k_1},a_{k_0}\}$ has positive (resp.,
neagtive) sign. If $\Re(x)>0$ and $\Im(x)\neq0$, then, obviously,
for the image part of \eqref{Des+tl}, we derive
\begin{eqnarray*}
 \Im(x) \Im\left(a_{k_3}(x-r_k)+a_{k_2}+\frac{a_{k_1}}{x-r_k}+
 \frac{a_{k_0}}{(x-r_k)^2}\right)<0 \quad (\text{resp.,} > 0)
\end{eqnarray*}
for all $k\in[n]$. Hence $\Im\left(\sum_{k=1}^n
t_k\left[a_{k_3}(x-r_k) + a_{k_2} +\frac{a_{k_1}}{x-r_k} +
\frac{a_{k_0}}{(x-r_k)^2}\right]\right)\neq0$ for $\Re(x)>0$ and
$\Im(x)\neq0$.

If $\Re(x)>0$ and $\Im(x)=0$, then, by hypothesis, we have that
all signs of
\begin{eqnarray*}
  a_{k_3}(x-r_k) + a_{k_2} +\frac{a_{k_1}}{x-r_k} +
 \frac{a_{k_0}}{(x-r_k)^2}
\end{eqnarray*}
are same for all $k\in[n]$. In consequence, $\sum_{k=1}^n
t_k\left[a_{k_3}(x-r_k) + a_{k_2} +\frac{a_{k_1}}{x-r_k} +
\frac{a_{k_0}}{(x-r_k)^2}\right]\neq0$.

Hence, from above two cases, we get that
$\mathfrak{I}_n(\mathcal{P}; x)$ is nonzero when $\Re(x)
> 0$. Namely $\mathfrak{I}_n(\mathcal{P}; x)$ is Hurwitz stable
for each $n$.

\end{proof}

\begin{re}\label{rem+Turan}
\em Obviously, the conclusion for $\mathfrak{I}_n(\mathcal{P}; x)$
in Theorem \ref{thm+Turan} can be extended to that
$\mathfrak{I}_n(\mathcal{P}; x+z_n)$ is Hurwitz stable if $z_n$ is
not less than the largest zero of $\mathscr{P}_n(x)$ for all nonnegative
integers $n$.
\end{re}

\subsection{The generalized Eulerian polynomials}

Fisk showed that the Tur\'an expressions of Eulerian polynomials are
Hurwitz stable in his unfinished book (see \cite[Lemma
21.91]{Fi06}), but his proof is incorrect. In \cite{Z18ejc}, Zhu
again proved the Hurwitz stablity of Eulerian polynomials. And here,
we will give a generalized result.

For $r \ge 1$, Riordan \cite{Rio58} defined the $r$-Eulerian
polynomial
\begin{eqnarray*}\label{def+r+Euler}
E_{n,r}(x) = \sum\limits_{\pi \in S_n} x^{\text{exc}_r(\pi)},
\end{eqnarray*}
where $\text{exc}_r(\pi)$, the number of $r$-excedances of $\pi$,
is defined by
$$
\text{exc}_r(\pi) =| \{i \in [n] : \pi_i \ge i + r\}|.
$$
And then, Riordan \cite[p. 214]{Rio58} got the following recurrence
relation:
\begin{equation}\label{Rec+exc+poly}
E_{n,r}(x)=[(n-r)x+r]E_{n-1,r}(x)+x(1-x)D_xE_{n-1,r}(x),
\end{equation}
where $E_{r,r}(x)=r!$ for $n\ge r$. Note that whenever $r=1$, $E_{n,1}(x)$
is the classical Eulerian polynomial.

In addition, to study the volume of the usual permutohedron, Postnikov
\cite{Pos09} introduced the \emph{mixed Eulerian numbers} $A_{a_1,\dots,a_r}$,
where $a_i\ge0$ and $a_1+\cdots+a_r=r$. In terms of mixed Eulerian numbers,
Berget et al. \cite{BST20} defined the polynomial
$$
    A_{a_1,\dots,a_r}(x)
 := \sum\limits_{i=0}^{n-r}A_{0^i,a_1,\dots,a_r,0^{n-r-i}}x^i
$$
for $a_i\ge1$ and $a_1+\cdots+a_r=n$. And then, they gave the recurrence
relation of the polynomial $A_{a_1,\dots,a_r}(x)$ as follows:
\begin{equation}\label{Rec+mix+Euler}
    A_{a_1,\dots,a_{r}+1}(x)
  = [(n-r+1)x+r]A_{a_1,\dots,a_r}(x)+x(1-x)D_xA_{a_1,\dots,a_r}(x).
\end{equation}
Note that $A_{a_1,\dots,a_r}(x)$ is the $r$-Eulerian polynomial
$E_{n,r}(x)$ whenever $a_i=1$ for $i\in[r-1]$ and $a_r=n-r+1$.
In particular, it follows from the recurrence relation of
$A_{a_1,\dots,a_r}(x)$ that $A_{a_1,\dots,a_r}(1)=n!$ which was
conjectured by Stanley and proved by Postnikov (see \cite[Theorem
16.4]{Pos09}). On the other hand, by using the method of zeros
interlacing, it is easy to know that $A_{a_1,\dots,a_{r}}(x)$ has
only non-positive zeros, moreover, zeros of $A_{a_1,\dots,a_{r}}(x)$
interlace those of $A_{a_1,\dots,a_{r}+1}(x)$. Thus, the
coefficients of $A_{a_1,\dots,a_{r}}(x)$ are unimodal and
log-concave.

In addition, we define the following polynomial
\begin{equation}\label{Rel+mix+Euler}
\mathcal{J}_{n,r}^{a_1,\dots,a_{r}}(x):=\frac{x^{n-r}A_{a_1,\dots,a_{r}}(1/x)}{r!}.
\end{equation}
Combining \eqref{Rec+mix+Euler} and \eqref{Rel+mix+Euler}, it is
easy to know that $\mathcal{J}_{n,r}^{a_1,\dots,a_{r}}(x)$ satisfies
the following relation
\begin{equation}\label{Rel+rec+mix+Euler}
   \mathcal{J}_{n,r}^{a_1,\dots,a_{r}}(x)
 = [(n-1)x+1]\mathcal{J}_{n-1,r}^{a_1,\dots,a_{r}}(x)
   +x(1-x)D_x\mathcal{J}_{n-1,r}^{a_1,\dots,a_{r}}(x),
\end{equation}
where $\mathcal{J}_{n,r}^{a_1,\dots,a_{r}}(1)=n!/r!$.

Let $\mathcal{J}_{n,r}$ be the set of injections $\pi:[n-r]
\rightarrow [n]$. Based on the set of images of $\pi$, define
the polynomial
\begin{equation*}
\mathcal{J}_{n,r}(x)=\sum\limits_{\pi\in\mathcal{J}_{n,r}}x^{\text{exc}(\pi)},
\end{equation*}
where $\text{exc}(\pi) = \text{exc}_1(\pi)$. Then, a relation
between $E_{n,r}(x)$ and $\mathcal{J}_{n,r}(x)$ was proved in
\cite{Rio58} as follows:
\begin{equation}\label{Rea+J+A}
\mathcal{J}_{n,r}(x)=\frac{x^{n-r}E_{n,r}(1/x)}{r!}.
\end{equation}
Combining \eqref{Rec+exc+poly} and \eqref{Rea+J+A}, Elizalde
\cite{Eli21} gave
\begin{equation*}
   \mathcal{J}_{n,r}(x)
 = [(n-1)x+1]\mathcal{J}_{n-1,r}(x)+x(1-x)D_x\mathcal{J}_{n-1,r}(x)
\end{equation*}
for $n > r$. where $\mathcal{J}_{r,r}(x)=1$. Note that $\mathcal{J}_{n,r}(x)
=\mathcal{J}_{n,r}^{1,\dots,1,n-r+1}(x)$. This recurrence relation is the
same as that of the classical Eulerian polynomials, but the initial condition
is different. For $r \in \{2, 3, 4, 5\}$, the reader can be referred to
\cite[A144696-A144699]{Slo}. Obviously, $\mathcal{J}_{n+r,r}(x)$ is a
special case of the generalized Eulerian polynomial $\mathscr{T}_n(x)$ in
(\ref{rec+gene+Eur+gf}) by taking $d=0$ and $\lambda=1$.

Archer et.al \cite{AGPS19} introduced the quasi-Stirling
permutations $\bar{Q}_n$, which is a set of
$\pi=\pi_1\pi_2\cdots\pi_n$ in the multiset $\{1, 1, 2, 2, \dots, n,
n\}$ avoiding $1212$ and $2121$, i.e., there does not exist
$i<j<k<\ell$ such that $\pi_i=\pi_k$ and $\pi_j=\pi_\ell$ for any
$\pi \in \bar{Q}_n$. Elizalde \cite{Eli21} defined the quasi-Stirling
polynomial
\begin{equation*}
\bar{Q}_n(x)=\sum\limits_{\pi \in \bar{Q}_n} x^{\text{des}(\pi)}
\end{equation*}
and he got $\bar{Q}_n(x)=\mathcal{J}_{2n,n+1}(x)$.

These different kinds of Eulerian polynomials can be obtained by
the transformation of the special cases of the generalized Eulerian
polynomial $\mathscr{T}_n(x)$ in (\ref{rec+gene+Eur+gf}) by taking
$d=0, \lambda=1$ and different initial conditions. Applying Theorem
\ref{thm+Turan} to the generalized Eulerian polynomial $\mathscr{T}_n(x)$,
we get the following result proved by Zhu \cite{Zhu20jcta}.

\begin{coro}\emph{\cite[Theorem 2.16]{Zhu20jcta}}\label{Coro+Z+Turan+gen+Eur}
Let $\mathscr{T}=(\mathscr{T}_n(x))_{n\geq0}$, where $\mathscr{T}_n(x)$
is the $n$-row generating function of the generalized Eulerian triangle
in (\ref{rec+gene+Eur}).If $\{ a_1, b_1, \lambda\} \subseteq
\mathbb{R}^{ > 0}$ and $\{a_2, b_2, d\} \subseteq \mathbb{R}^{\ge 0}$
with $a_2 + b_2 > 0$, then $\mathfrak{I}_n(\mathscr{T}; x)$ is Hurwitz
stable for all $n$.
\end{coro}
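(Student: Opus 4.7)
The plan is to apply Theorem \ref{thm+Turan} to $\mathscr{P}_n = \mathscr{T}_n$ using the recurrence \eqref{rec+gene+Eur+gf}. First I observe that the polynomial $q_n(x)$ appearing there is actually independent of $n$, so \eqref{rec+gene+Eur+gf} has the form \eqref{Rec+Turan}. The required interlacing $\mathscr{T}_n(x) \ll \mathscr{T}_{n+1}(x)$ follows from the proof of Corollary 2.11 via Theorem \ref{thm+RS}: the condition $F(x) \ll G(x)$ established there, combined with the factorization $T(x+y)^{m_n} = (x+y)^{m_n-1}[G(x)+F(x)y]$ from the proof of Theorem \ref{thm+FG}, yields the interlacing by a standard argument.

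The central computational step is to factor $(x-r_k)[p_n(x) - p_{n-1}(x)] + q(x)$ so as to identify the coefficients in \eqref{dec+root}. Direct subtraction gives
$$p_n(x) - p_{n-1}(x) = \frac{(b_1 - da_1)\,x\,(dx+\lambda)}{\lambda}.$$
For $q(x)$ I extract a factor of $x$ and then factor the remaining quadratic; its discriminant simplifies to the perfect square $\lambda^2 b_1^2$, so that
$$q(x) = \frac{x\,(dx+\lambda)\,[\lambda a_1 - (b_1-da_1)x]}{\lambda}.$$
Setting $h_n(x) := x(dx+\lambda)/\lambda$, the common factor $h_n(x)$ pulls out cleanly and a telescoping cancellation of the terms linear in $x$ yields
$$(x-r_k)\bigl[p_n(x)-p_{n-1}(x)\bigr] + q(x) = h_n(x)\bigl[\lambda a_1 - (b_1-da_1)\,r_k\bigr].$$
Thus, in the notation of \eqref{dec+root}, $a_{k_3} = a_{k_2} = a_{k_1} = 0$ and $a_{k_0} = \lambda a_1 - (b_1-da_1)r_k$.

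It remains to verify the hypotheses of Theorem \ref{thm+Turan}. Since $d \ge 0$ and $\lambda > 0$, both $x$ and $dx+\lambda$ are Hurwitz stable, hence so is $h_n(x)$. Because $\mathscr{T}_n(x)$ has only non-positive zeros (from real-rootedness and nonnegativity of its coefficients), and because $a_1,\lambda > 0$ and $b_1 - da_1 \ge 0$, we have $a_{k_0} \ge \lambda a_1 > 0$ uniformly in $k$; therefore every nonzero element of $\bigcup_{k=1}^n\{-a_{k_3}, a_{k_1}, a_{k_0}\}$ is strictly positive. Moreover, for $x > 0$ the right-hand side of \eqref{dec+root} equals $h_n(x)\,a_{k_0} > 0$, satisfying the uniform sign condition. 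Theorem \ref{thm+Turan} then immediately delivers the Hurwitz stability of $\mathfrak{I}_n(\mathscr{T}; x)$. The main obstacle is the algebraic factorization of $q(x)$: a priori the cubic need not split over the reals, but the identity $(b_1 - 2da_1)^2 + 4a_1 d(b_1-da_1) = b_1^2$ forces the discriminant to be a perfect square and, crucially, produces the same factor $x(dx+\lambda)$ that appears in $p_n(x)-p_{n-1}(x)$, which is precisely what makes the whole argument collapse to the very simple form $a_{k_0} = \lambda a_1 - (b_1-da_1)r_k$.
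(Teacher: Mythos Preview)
Your approach is essentially identical to the paper's: both apply Theorem~\ref{thm+Turan} with $h_n(x)=x(dx+\lambda)/\lambda$, and your $a_{k_0}=\lambda a_1-(b_1-da_1)r_k$ is just a regrouping of the paper's $(\lambda+dr_k)a_1-b_1 r_k$.

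There is, however, one small gap. To conclude $a_{k_0}>0$ you invoke $b_1-da_1\ge 0$, but that is a hypothesis of Corollary~2.11, \emph{not} of the present corollary, whose stated assumptions are only $a_1,b_1,\lambda>0$ and $a_2,b_2,d\ge 0$ with $a_2+b_2>0$. The paper sidesteps this by citing from \cite{Zhu20jcta} that all zeros of $\mathscr{T}_n$ lie in $[-\lambda/d,0]$; the lower bound $r_k\ge -\lambda/d$ then gives $(\lambda+dr_k)a_1\ge 0$ and $-b_1 r_k\ge 0$, hence $a_{k_0}>0$, with no sign assumption on $b_1-da_1$. Your derivation of the interlacing $\mathscr{T}_n\ll\mathscr{T}_{n+1}$ via Corollary~2.11 has the same hidden extra hypothesis. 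If you simply cite the interlacing and zero-location facts from \cite{Zhu20jcta} (as the paper does) and use the paper's grouping for the positivity of $a_{k_0}$, the two proofs coincide verbatim.
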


\begin{proof}

It was proved for $n \in \mathbb{N}$ that $\mathscr{T}_n(x) \ll
\mathscr{T}_{n+1}(x)$ and all zeros of $\mathscr{T}_n(x)$ are in $[-\lambda/d, 0]$
in \cite{Zhu20jcta}. By the recurrence relation
(\ref{rec+gene+Eur+gf}), we derive the corresponding
(\ref{dec+root}) as follows:
$$
  (x-r_k)[p_n(x)-p_{n-1}(x)]+q(x)
= \frac{x}{\lambda}(dx+\lambda)[(\lambda + dr_k)a_1-b_1r_k].
$$
We take $h_n(x)=x(dx+\lambda)/\lambda$ and
$a_{k_0}=(\lambda+dr_k)a_1-b_1r_k$. By the assumption conditions and
$r_k \in [-\lambda/d, 0]$, then the desired result is immediate by
Theorem \ref{thm+Turan}.
\end{proof}

\begin{re}
\em By \eqref{Rea+J+A}, the Tur\'an expressions of polynomial
sequence $(E_{n+r,r}(x))_{n\ge0}$ are also Hurwitz stable. So are
those of $(A_{a_1,\dots,a_{r}+n}(x))_{n\ge0}$ since
$\mathcal{J}_{n,r}^{a_1,\dots,a_{r}}(x)$ is the special case of the
generalized Eulerian polynomial $\mathscr{T}_n(x)$ in
(\ref{rec+gene+Eur+gf}) by taking $d=0$ and $\lambda=1$.

Note that the exponential generating function of the mixed Eulerian
numbers $A_{a_1,\dots,a_r}$ is the volume ${\rm Vol} P_{r+1}$ of a
permutohedron $P_{r+1}$ (see \cite[Section 16]{Pos09} for details).
In fact, ${\rm Vol} P_{r+1}$ is a Lorentzian polynomial by using the
conclusion in \cite{BH20}. Thus, ${\rm Vol} P_{r+1}$ has the
corresponding properties, such as the M-convexity of $supp({\rm Vol}
P_{r+1})$ and discrete log-concavity of the mixed Eulerian numbers
$A_{a_1,\dots,a_r}$.
\end{re}

\subsection{The generalized Bell polynomials}
For the Bell polynomial $B_n(x)$, it satisfies the recurrence
relation
\begin{eqnarray*}
B_{n+1}(x)=xB_n(x)+xD_xB_n(x), &\text{where} \quad B_0(x)=1.
\end{eqnarray*}
Fisk showed that Bell polynomials are Hurwitz stable in his
unfinished book (see \cite[Lemma 21.92]{Fi06}). But Chasse et al.
pointed out that Fisk's proof is incorrect and reproved that of Bell
polynomials in \cite{CGV15}. In fact, Chasse et al. proved the Hurwitz
stability of Tur\'an expression for the generalized Bell polynomials
in the following result, which follows from Theorem \ref{thm+Turan}.

\begin{coro}\emph{\cite[Theorem 1.1]{CGV15}}\label{coro+Chass+turan}
Let $\mathcal{B}=(\mathscr{B}_n(x))_{n \ge 0}$ be a real polynomial
sequence with $\deg(\mathscr{B}_n(x))=n$. If $\mathscr{B}_n(x)$
satisfies
\begin{eqnarray}\label{rec+dif+T}
\mathscr{B}_{n+1}(x)=a(x+b)(c_n+D_x)\mathscr{B}_n(x),
\end{eqnarray}
where $a \neq 0, b \ge 0$ and $c_{n+1} \ge c_n > 0$ for all $n \in
\mathbb{N}$, then $\mathfrak{I}_n(\mathcal{\mathcal{B}}; x-b)$ is
Hurwitz stable for all $n\in \mathbb{N}$.
\end{coro}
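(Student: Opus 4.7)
The plan is to apply Theorem \ref{thm+Turan} to the translated polynomial sequence $\hat{\mathscr{B}}_n(x) := \mathscr{B}_n(x-b)$, which converts the $(-b)$-shift built into $\mathfrak{I}_n(\mathcal{B}; x-b)$ into an ordinary Tur\'an expression of a new sequence. First I would reduce to the case $a > 0$ and $\mathscr{B}_0 > 0$: the substitution $\mathscr{B}_n \mapsto (-1)^n\mathscr{B}_n$ turns $a$ into $-a$ in \eqref{rec+dif+T} without changing $\mathfrak{I}_n(\mathcal{B};\cdot)$, and rescaling the whole sequence by a nonzero constant leaves both \eqref{rec+dif+T} and the Hurwitz stability of $\mathfrak{I}_n$ invariant. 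After these reductions, the change of variable $x \mapsto x-b$ in \eqref{rec+dif+T} gives
$$
\hat{\mathscr{B}}_{n+1}(x) = a x (c_n+D_x)\hat{\mathscr{B}}_n(x) = a c_n x\,\hat{\mathscr{B}}_n(x) + a x\, D_x\hat{\mathscr{B}}_n(x),
$$
and a direct computation shows $\mathfrak{I}_n(\hat{\mathcal{B}}; x) = \mathfrak{I}_n(\mathcal{B}; x-b)$, so it is enough to prove Hurwitz stability of $\mathfrak{I}_n(\hat{\mathcal{B}}; x)$.

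Second, I would verify by induction on $n$ that $\hat{\mathscr{B}}_n$ has only real, non-positive zeros with $0$ a simple zero for $n \ge 1$, and that $\hat{\mathscr{B}}_n \ll \hat{\mathscr{B}}_{n+1}$. The inductive step analyzes the rational function $c_n + \hat{\mathscr{B}}_n'/\hat{\mathscr{B}}_n$: since $c_n > 0$, it is strictly decreasing on each maximal interval of definition, tends to $c_n$ at $\pm\infty$, and escapes to $-\infty$ (respectively $+\infty$) just to the left (respectively right) of every simple zero of $\hat{\mathscr{B}}_n$. Hence it vanishes exactly once in each open interval between consecutive zeros and exactly once strictly to the left of the smallest zero, while remaining positive to the right of the largest zero $0$; consequently $(c_n+D_x)\hat{\mathscr{B}}_n$ is real-rooted with $n$ zeros that strictly interlace those of $\hat{\mathscr{B}}_n$ and all lie strictly below $0$. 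Multiplying by $x$ restores $0$ as the unique largest simple zero of $\hat{\mathscr{B}}_{n+1}$ and yields $\hat{\mathscr{B}}_n \ll \hat{\mathscr{B}}_{n+1}$; the nonnegativity of coefficients of $\hat{\mathscr{B}}_n$ propagates automatically.

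Finally I would apply Theorem \ref{thm+Turan} with $p_n(x) = a c_n x$ and $q(x) = a x$. Since $p_n(x) - p_{n-1}(x) = a(c_n-c_{n-1})x$, the decomposition \eqref{dec+root} becomes
$$
(x - r_k)\bigl[p_n(x) - p_{n-1}(x)\bigr] + q(x) = a x\bigl[(c_n-c_{n-1})(x-r_k)+1\bigr],
$$
so I would choose $h_n(x) = a x$, which is Hurwitz stable because $a > 0$, and read off $a_{k_0} = 1$, $a_{k_1} = c_n - c_{n-1} \ge 0$, and $a_{k_2} = a_{k_3} = 0$. Then every element of $\bigcup_{k=1}^n\{-a_{k_3}, a_{k_1}, a_{k_0}\}$ is nonnegative, and for $x > 0$ and $r_k \le 0$ each factor on the right-hand side is positive, so the sign condition also holds. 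Theorem \ref{thm+Turan} then delivers the Hurwitz stability of $\mathfrak{I}_n(\hat{\mathcal{B}}; x) = \mathfrak{I}_n(\mathcal{B}; x-b)$. The main obstacle will be the interlacing/nonpositivity induction of the second step: one must check that $c_n + D_x$ preserves real-rootedness \emph{and} pushes every zero strictly left, so that the factor $x$ can reinstate $0$ as the largest simple zero and keep the hypotheses of Theorem \ref{thm+Turan} available at every stage.
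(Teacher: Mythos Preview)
Your proposal is correct and follows essentially the same approach as the paper: reduce to $a>0$, establish real-rootedness with nonpositive zeros and the interlacing $\mathscr{B}_n \ll \mathscr{B}_{n+1}$, then apply Theorem~\ref{thm+Turan} with $h_n(x)=a(x+b)$ (or $ax$ after your shift), $a_{k_0}=1$, $a_{k_1}=c_n-c_{n-1}\ge 0$. The only cosmetic difference is that you translate by $-b$ before invoking Theorem~\ref{thm+Turan}, whereas the paper applies the theorem to $\mathscr{B}_n(x)$ itself (zeros $\le -b$) and then uses Remark~\ref{rem+Turan} to pass to $\mathfrak{I}_n(\mathcal{B};x-b)$; the substance is identical.
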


\begin{proof}

Without loss of generality, we assume $a > 0$ and $\mathscr{B}_0(x) > 0$.
Obviously \eqref{rec+dif+T} implies that all coefficients of
$\mathscr{B}_n(x)$ are real and nonnegative for $n\in \mathbb{N}$.
By induction on $n$, we can show that $\mathscr{B}_n(x)$ is
real-rooted with all zeros $r_k\leq-b$ for $k \in [n]$ and
$\mathscr{B}_n(x) \ll \mathscr{B}_{n+1}(x)$ for all $n \in \mathbb{N}$.

Then the corresponding \eqref{dec+root} for $\mathscr{B}_n(x)$ is
$$
(x-r_k)(p_n(x)-p_{n-1}(x))+q(x)=a(x+b)\left[(c_n-c_{n-1})(x-r_k)+1\right].
$$
We can take $h_n(x) =a(x+b)$, $ a_{k_0}=1$ and $a_{k_1}=c_n-c_{n-1}$.
Hence $\mathfrak{I}_n(\mathcal{\mathcal{B}}; x)$ is Hurwitz stable by
Theorem \ref{thm+Turan}. So is $\mathfrak{I}_n(\mathcal{\mathcal{B}}; x-b)$
by Remark \ref{rem+Turan}.
\end{proof}

\subsection{The Stirling-Whitney-Riordan polynomials}

For the Tur\'an expression of the row-generating function of the
Stirling-Whitney-Riordan triangle (\ref{rec+SWR}), Zhu \cite{Zhu21}
proved the following result concerning its Hurwitz stability. It can
also be looked as a corollary of Theorem \ref{thm+Turan}.

\begin{coro}\emph{\cite[Theorem 3.2]{Zhu21}}\label{cor+Z+Turan+SWR}
Let $\mathcal{S}=(\mathscr{S}_n(x))_{n\geq0}$, where $\mathscr{S}_n(x)$
is the $n$-th row-generating function of the Stirling-Whitney-Riordan
triangle in \eqref{rec+SWR}.
 If $\{\lambda, a_1, a_2, b_1,
b_2\}\subseteq\mathbb{R}^{\ge 0}$ and $a_1(b_1 + b_2) \ge a_2b_1$, then
$\mathfrak{I}_n(\mathcal{S}; x-\lambda)$ is Hurwitz stable for all
$n$.
\end{coro}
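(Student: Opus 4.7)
The plan is to apply Theorem~\ref{thm+Turan} and then promote the conclusion via Remark~\ref{rem+Turan}. Before doing so, I would secure two structural facts about $(\mathscr{S}_n(x))_{n\ge 0}$: (i) the interlacing $\mathscr{S}_n(x)\ll\mathscr{S}_{n+1}(x)$ for every $n$, and (ii) every zero of $\mathscr{S}_n(x)$ lies in $(-\infty,-\lambda]$. Both are established by induction on $n$ from the recurrence \eqref{rec+SWR+gf}. Fact (i) is a by-product of the real-stability-preservation argument underlying Corollary~\ref{cor+Z+Turan+SWR}: because the operator
$$
T=[a_2+(b_1+b_2)(x+\lambda)]I+(x+\lambda)[a_1+b_1(x+\lambda)]D_x
$$
preserves real stability under the stated hypotheses, applying it to a real-rooted polynomial produces a polynomial whose zeros interlace the input in the required sense. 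Fact (ii) follows because, under the induction hypothesis that all zeros of $\mathscr{S}_{n-1}$ are at most $-\lambda$, we have $\mathscr{S}_{n-1}(r)>0$ and $\mathscr{S}_{n-1}'(r)\ge 0$ for every $r>-\lambda$, and the non-negativity of $p_n(r)$ and $q(r)$ on $[-\lambda,\infty)$ then forces $\mathscr{S}_n(r)>0$ there.

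Next, I would read off the data required by Theorem~\ref{thm+Turan} from \eqref{rec+SWR+gf}, taking
$$
p_n(x)=a_2+(b_1+b_2)(x+\lambda),\qquad q(x)=(x+\lambda)[a_1+b_1(x+\lambda)].
$$
The key simplification is that $p_n(x)$ is independent of $n$, so $p_n(x)-p_{n-1}(x)\equiv 0$. Consequently the left-hand side of \eqref{dec+root} collapses to $q(x)$ alone, uniformly in $r_k$. I therefore set
$$
h_n(x):=(x+\lambda)[a_1+b_1(x+\lambda)],\qquad a_{k_0}=1,\qquad a_{k_1}=a_{k_2}=a_{k_3}=0.
$$
The polynomial $h_n(x)$ is Hurwitz stable because its zeros $-\lambda$ and (when $b_1>0$) $-\lambda-a_1/b_1$ are both non-positive. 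The sign hypothesis on $\bigcup_k\{-a_{k_3},a_{k_1},a_{k_0}\}=\{0,1\}$ is trivially satisfied, and the right-hand side of \eqref{dec+root}, being $h_n(x)$, is non-negative on $x>0$.

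Applying Theorem~\ref{thm+Turan} then yields Hurwitz stability of $\mathfrak{I}_n(\mathcal{S};x)$. Since fact (ii) says $-\lambda$ is not less than the largest zero of $\mathscr{S}_n(x)$, invoking Remark~\ref{rem+Turan} with $z_n=-\lambda$ promotes this to Hurwitz stability of $\mathfrak{I}_n(\mathcal{S};x-\lambda)$, as required.

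The principal obstacle is fact (i): only real-rootedness is explicitly recorded in Corollary~\ref{cor+Z+Turan+SWR}, so to extract the interlacing one must revisit the underlying stability argument and use that the bivariate polynomial $G(x)+F(x)y$ is real stable (per Theorem~\ref{thm+FG}) to deduce zero-ordering between consecutive $\mathscr{S}_n$. A secondary, minor issue is the degenerate case $a_1=b_1=0$, in which $q\equiv 0$ and the recurrence forces $\mathscr{S}_n(x)=[a_2+b_2(x+\lambda)]^n$, rendering every Tur\'an expression identically zero and the claim vacuous.
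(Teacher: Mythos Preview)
Your proposal is correct and follows essentially the same route as the paper: compute that $p_n-p_{n-1}\equiv 0$, take $h_n(x)=(x+\lambda)[a_1+b_1(x+\lambda)]$ with $a_{k_0}=1$, apply Theorem~\ref{thm+Turan}, and then shift by $-\lambda$ via Remark~\ref{rem+Turan}. The only difference is that the paper simply cites \cite{Zhu21} for the interlacing $\mathscr{S}_{n-1}\ll\mathscr{S}_n$ and the zero location (in fact the sharper interval $(-\lambda-a_1/b_1,-\lambda)$), whereas you sketch independent inductive arguments for these facts.
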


\begin{proof}
Note that it was proved in \cite{Zhu21} that all zeros of $\mathscr{S}_n(x)$
are in $(-\lambda-a_1/b_1, -\lambda)$ and $\mathscr{S}_{n-1}(x) \ll
\mathscr{S}_{n}(x)$ for all $n \in \mathbb{N}$. Thus, by Remark
\ref{rem+Turan}, it suffices to show that $\mathfrak{I}_n(\mathcal{S}; x)$
is Hurwitz stable for all $n$.

By (\ref{rec+SWR+gf}), we get the corresponding (\ref{dec+root})
for $\mathscr{S}_n(x)$ as follows:
$$
   (x-r_k)(p_n(x)-p_{n-1}(x))+q(x)
 = (x+\lambda)\left[a_1+b_1(x+\lambda)\right].
$$
By taking $h_n(x) =(x+\lambda)\left[a_1+b_1(x+\lambda) \right]$
and $a_{k_0}=1$, the desired result concerning Hurwitz stability
is immediate by Theorem \ref{thm+Turan}.
\end{proof}

There exists some combinatorial polynomials such that Corollaries
\ref{Coro+Z+Turan+gen+Eur}, \ref{coro+Chass+turan} and \ref{cor+Z+Turan+SWR}
can not be used. But our Theorem \ref{thm+Turan} is still valid.
Some such examples are given in the following.

\subsection{Alternating runs of type  \texorpdfstring{$A$} a}

We say that $\pi \in S_n$ changes direction at position $i$ if
either $\pi_{i-1} < \pi_i > \pi_{i+1}$ or $\pi_{i-1}
> \pi_i < \pi_{i+1}$ for $i \in \{2, \dots, n-1\}$.
Let $R(n, k)$ be the number of $\pi \in S_n$ having $k$ alternating
runs, namely there are $k-1$ indices $i$ such that $\pi$ changes
direction at these positions. For example, let $\pi = 31264875$ and
its alternating runs are $312, 264, 648$. Andr\'e \cite{An84} gave
the recurrence relation as follows:
\begin{equation}\label{Rec+ar+A}
R(n, k) = kR(n- 1, k) + 2R(n-1, k-1) + (n-k)R(n-1, k-2)
\end{equation}
for $n, k \ge 1$, where $R(1,0)=1$ and $R(1,k)=0$ for $k \ge 1$.
Let the row-generating function $R_n(x)=\sum_{k=0}^{n}R(n,k)x^k$.
Then the recurrence relation \eqref{Rec+ar+A} implies
$$
R_{n+2}(x) = x(nx + 2)R_{n+1}(x) + x(1-x^2)D_xR_{n+1}(x)
$$
with $R_1(x)=1$ and $R_2(x)=2x$. Zhu \cite{Z18ejc} proved the
$q$-log-convexity of $R_n(q)$, which is immediate from the
following stronger result.

\begin{prop}\label{prop+alt+run+turan+A}
The Tur\'an expressions of $(R_n(x))_{n \ge 0}$ are Hurwitz stable.
\end{prop}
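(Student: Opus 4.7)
The plan is to apply Theorem \ref{thm+Turan} to the shifted sequence $\mathscr{P}_n(x) := R_{n+1}(x)$, which has degree $n$ and satisfies the recurrence \eqref{Rec+Turan} with $p_n(x) = x(nx+2)$ and $q(x) = x(1-x^2)$. The Hurwitz stability of $\mathfrak{I}_n(\mathcal{R}; x)$ will then follow directly (up to an index shift). Before invoking the theorem, I need the prerequisite that $R_{n+1}(x) \ll R_{n+2}(x)$ and that every zero of $R_{n+1}(x)$ lies in $[-1, 0]$. These are known facts about the alternating runs polynomial that I would cite from the literature (see, e.g., \cite{Z18ejc}); note that they do not drop out of the real-stability criterion of Theorem \ref{thm+RS}(2), since with $\bm{\alpha} = n$, $\bm{\beta} = 2$, $\bm{\gamma} = \bm{\nu} = \bm{\psi} = 0$, $\bm{\mu} = -1$, $\bm{\varphi} = 1$ and $m_n = n$ the inequality there becomes $4n - n^3 \ge 0$, which fails for $n \ge 3$.

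With those prerequisites in hand, the heart of the argument is a short computation of the decomposition \eqref{dec+root}. Since $p_n(x) - p_{n-1}(x) = x^2$ and $q(x) = x - x^3$, for every zero $r_k$ of $\mathscr{P}_n(x) = R_{n+1}(x)$ the cubic terms cancel and one obtains
$$
(x - r_k)[p_n(x) - p_{n-1}(x)] + q(x) = x^2(x - r_k) + x(1 - x^2) = x\bigl(1 - r_k x\bigr).
$$
Writing $1 - r_k x = (1 - r_k^2) + (-r_k)(x - r_k)$ and choosing $h_n(x) = x$, which is Hurwitz stable, I read off
$$
a_{k_0} = 1 - r_k^2, \qquad a_{k_1} = -r_k, \qquad a_{k_2} = a_{k_3} = 0.
$$

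Finally I would verify the sign hypotheses of Theorem \ref{thm+Turan}. Because $r_k \in [-1, 0]$, every element of $\bigcup_{k=1}^{n}\{-a_{k_3}, a_{k_1}, a_{k_0}\}$ is nonnegative (equal to $0$, to $-r_k$, or to $1 - r_k^2$), and for $x > 0$ the quantity $h_n(x)\sum_{i} a_{k_i}(x - r_k)^i = x(1 - r_k x)$ is nonnegative for every $k$ since $r_k \le 0 \le x$. Theorem \ref{thm+Turan} then yields the Hurwitz stability of $\mathfrak{I}_n(\mathcal{R}; x)$. The main obstacle in carrying out the plan is thus really the input step rather than the verification: once the interlacing $R_n(x) \ll R_{n+1}(x)$ together with zero-containment in $[-1,0]$ is accepted, the remaining work is a clean three-line calculation of \eqref{dec+root} and a sign check.
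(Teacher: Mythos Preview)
Your proposal is correct and follows essentially the same route as the paper's own proof: both invoke Theorem~\ref{thm+Turan} with $h_n(x)=x$, compute $(x-r_k)(p_n(x)-p_{n-1}(x))+q(x)=x[(1-r_k^2)-r_k(x-r_k)]$, read off $a_{k_0}=1-r_k^2$ and $a_{k_1}=-r_k$, and use the fact (for which the paper cites Ma--Wang~\cite{MW08}) that all zeros of $R_n(x)$ lie in $[-1,0]$ with $R_n(x)\ll R_{n+1}(x)$.
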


\begin{proof}

We know that all zeros of $R_n(x)$ are in $[-1, 0]$ and $R_n(x)
\ll R_{n+1}(x)$ (see Ma and Wang \cite{MW08} for the details).
Then the corresponding \eqref{dec+root} for $R_n(x)$ is
$$
(x-r_k)(p_n(x)-p_{n-1}(x))+q(x) = x[-r_k(x-r_k)+1-r_k^2].
$$
Taking $h_n(x)=x, a_{k_0}=1-r_k^2$ and $a_{k_1}=-r_k$. The desired
result follows from Theorem \ref{thm+Turan} since $r_k \in [-1, 0]$.
\end{proof}

\subsection{The longest alternating subsequences and up-down runs}

Let $\widetilde{\pi} = \pi_{i_1} \cdots \pi_{i_k}$ be a subsequence
of $\pi \in S_n$. We say $\widetilde{\pi}$ is an \emph{alternating
subsequence} of $\pi$ if $\widetilde{\pi}$ satisfies
$$
\pi_{i_1} >  \pi_{i_2} < \pi_{i_3} > \cdots \pi_{i_k}.
$$
Denote by $a(n,k)$ the number of $\pi \in S_n$, where the length of
the longest alternating subsequence of $\pi$ is $k$. B\'ona
\cite[Section 1.3.2]{B12} showed that the row-generating function
$t_n(x) =\sum_{k=0}^{n} a(n,k)x^k$ satisfies the following identity:
$$
t_n(x)= \frac{1}{2}(1+x)R_n(x)
$$
for $n \ge 2$. In addition, $t_0(x)=1$ and $t_1(x) = x$.

Note that $t_n(x)$ coincides with the up-down runs polynomial, see
\cite[A186370]{Slo}. In addition, $t_n(x)$ is closely related to
two kinds of peak polynomials $W_n(x)$ and $\widetilde{W}_n(x)$,
which are defined by
\begin{eqnarray}\label{left+peak}
     W_{n}(x)
 &=& \sum_{\pi\in S_n}x^{pk(\pi)}
  =  \sum_{k\geq0}W_{n,k}x^{k},     \nonumber \\
     \widetilde{W}_{n}(x)
 &=& \sum_{\pi\in S_n}x^{lpk(\pi)}
  =  \sum_{k\geq0}\widetilde{W}_{n,k}x^{k},
\end{eqnarray}
where $W_{1}(x)=1, \widetilde{W}_{0}(x)=1$ and $pk(\pi)$ and $lpk(\pi)$
denote the number of interior peaks and left peaks of
$\pi\in S_n$, respectively, see Petersen \cite{Pet07}, Stembridge \cite{St97}
and \cite[A008303, A008971]{Slo} for instance .

Based on these, Ma \cite{M12} defined the polynomials $M_n(x)$ by
\begin{eqnarray}\label{Rec+peak+HS}
M_n(x) = xW_n(x^2)+\widetilde{W}_n(x^2),
\end{eqnarray}
where $M_1(x)=1+x$. In fact, the coefficients of $M_n(x)$ arise
in expansion of $n$-th derivative of $\tan(x)+\sec(x)$, see
\cite[A198895]{Slo}.
It is known that $M_n(x)$ satisfies the
recurrence relation
$$
M_{n+1}(x)=(nx^2+1)M_n(x)+x(1-x^2)D_xM_n(x).
$$
It was shown that all zeros of $M_n(x)$ are in $[-1, 0]$ and $M_n(x)
\ll M_{n+1}(x)$ in \cite{M12}. Thus, by Proposition
\ref{prop+alt+run+turan+A} or Theorem \ref{thm+Turan}, we
immediately have the following result, which in particular implies
$q$-log-convexity of $(t_n(q))_{n \ge 0}$ and $(M_n(q))_{n\ge0}$
\cite{Z18ejc}.

\begin{prop}
The Tur\'an expressions of $(t_n(x))_{n \ge 0}$ and $(M_n(x))_{n \ge
0}$ are both Hurwitz stable.
\end{prop}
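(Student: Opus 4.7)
The plan is to treat $M_n(x)$ and $t_n(x)$ separately, handling the first by a direct application of Theorem \ref{thm+Turan} and reducing the second to Proposition \ref{prop+alt+run+turan+A}.

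For $M_n(x)$, I will feed the recurrence $M_{n+1}(x)=(nx^2+1)M_n(x)+x(1-x^2)D_xM_n(x)$ directly into Theorem \ref{thm+Turan}. Here $p_n(x)=nx^2+1$ and $q(x)=x(1-x^2)$, so $p_n(x)-p_{n-1}(x)=x^2$ and hence
\[
(x-r_k)\bigl[p_n(x)-p_{n-1}(x)\bigr]+q(x)=x^2(x-r_k)+x(1-x^2)=x(1-r_kx).
\]
Expanding $1-r_kx$ about $x=r_k$ gives $1-r_kx=(1-r_k^2)+(-r_k)(x-r_k)$, so in the notation of \eqref{dec+root} one may take $h_n(x)=x$, $a_{k_0}=1-r_k^2$, $a_{k_1}=-r_k$ and $a_{k_2}=a_{k_3}=0$. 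Since all zeros $r_k$ of $M_n(x)$ lie in $[-1,0]$, every such $a_{k_0}$ and $a_{k_1}$ is nonnegative, $-a_{k_3}=0$ is nonnegative, and for $x>0$ the product $x(1-r_kx)>0$. The polynomial $h_n(x)=x$ is Hurwitz stable, and the relation $M_n(x)\ll M_{n+1}(x)$ holds by Ma \cite{M12}. Theorem \ref{thm+Turan} therefore yields Hurwitz stability of $\mathfrak{I}_n(\mathcal{M};x)$.

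For $t_n(x)$, I will exploit the factorization $t_n(x)=\tfrac{1}{2}(1+x)R_n(x)$ for $n\ge 2$. Substituting this into the definition of the Tur\'an expression gives
\[
\mathfrak{I}_n(\mathcal{T};x)=t_{n+1}(x)^2-t_{n+2}(x)t_n(x)=\frac{(1+x)^2}{4}\bigl[R_{n+1}(x)^2-R_{n+2}(x)R_n(x)\bigr]=\frac{(1+x)^2}{4}\mathfrak{I}_n(\mathcal{R};x)
\]
for every $n\ge 2$. The factor $(1+x)^2$ is Hurwitz stable (its only zero is at $x=-1$), and $\mathfrak{I}_n(\mathcal{R};x)$ is Hurwitz stable by Proposition \ref{prop+alt+run+turan+A}; since the product of Hurwitz stable polynomials is Hurwitz stable, the conclusion follows for $n\ge 2$. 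The two remaining indices $n=0,1$ will be handled by direct computation using $t_0(x)=1$, $t_1(x)=x$, $R_2(x)=2x$, $R_3(x)=4x^2+2x$; both $\mathfrak{I}_0(\mathcal{T};x)$ and $\mathfrak{I}_1(\mathcal{T};x)$ are monomials (times $(1+x)$) with only non-positive zeros.

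There is no serious obstacle: the main technical point is the algebraic identity producing the clean factorization $x(1-r_kx)$ in the $M_n$ case, which makes the sign hypotheses of Theorem \ref{thm+Turan} transparent once one recalls $r_k\in[-1,0]$. The $t_n$ case is essentially bookkeeping once one notices that the linear factor $\tfrac{1}{2}(1+x)$ pulls cleanly out of the Tur\'an expression as $\tfrac{1}{4}(1+x)^2$.
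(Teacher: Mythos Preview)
Your proof is correct and follows essentially the same approach as the paper: the paper's proof is the single sentence ``by Proposition \ref{prop+alt+run+turan+A} or Theorem \ref{thm+Turan}, we immediately have the following result,'' and your argument spells out exactly these two routes—Theorem \ref{thm+Turan} applied directly to $M_n(x)$ via the identical computation $(x-r_k)x^2+x(1-x^2)=x(1-r_kx)$ used for $R_n(x)$, and Proposition \ref{prop+alt+run+turan+A} combined with the factorization $t_n(x)=\tfrac{1}{2}(1+x)R_n(x)$ for $t_n(x)$. Your handling of the small indices $n=0,1$ for $t_n$ is a minor detail the paper omits.
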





\subsection{Alternating runs of type \texorpdfstring{$B$} b}

Now, we consider the alternating runs of type $B$.
Let $B_n$ be all signed permutations of the set $\pm[n]$ such
that $\pi(-i) = -\pi(i)$ for all $i \in [n]$, where $\pm[n] =
\{\pm1, \pm2, \dots , \pm n\}$.
We say that $\pi
\in B_n$ is a \emph{alternating run} if $\pi_{i-1}<\pi_i>\pi_{i+1}$
or $\pi_{i-1}>\pi_i<\pi_{i+1}$ for $i\in [n-1]$ in the order
$\cdots<\overline{2}<\overline{1}<0<1<2<\cdots$, where $\pi_0 = 0$.
Taking the subset $B^{u}_n \subseteq B_n$, which satisfies $\pi_1 > 0$
whenever $\pi \in B^{u}_n$. We call $B^{u}_n$ the {\it up signed
permutations}. For example, taking $\pi =
31\overline{2}\overline{6}48\overline{7}5$, whose alternating runs
is $\{31, \overline{2}\overline{6}4, 48\overline{7}, 8\overline{7}5
\}$. Let $Z(n, k)$ denote the number of up signed permutations $\pi
\in B^{u}_n$ having $k-1$ alternating runs. Zhao \cite{Zha11} got the
following recurrence relation:
\begin{equation}\label{Rec+ar+B}
Z(n, k) = (2k-1)Z(n-1, k) + 3Z(n-1, k-1) + (2n-2k + 2)Z(n-1, k-2)
\end{equation}
for $n \ge 2$ and $k \in [n]$, where $Z(1, 1) = 1$ and $Z(1, k) = 0$
for $k > 1$.

Let the row-generating function $Z_n(x) = \sum_{k=1}^{n}Z(n,k)x^k$.
Then the recurrence relation \eqref{Rec+ar+B} implies
$$
Z_n(x) = [(2n-2)x^2+3x-1]Z_{n-1}(x)+2x(1-x^2)D_xZ_{n-1}(x),
$$
where $Z_1(x)=x$ and $Z_2(x)=x+3x^2$. It was proved in \cite{Z18ejc}
that $(Z_n(q))_{n\geq1}$ is $q$-log-convex, which is also immediate
from the following stronger result.

\begin{prop}
The Tur\'an expressions of $(Z_n(x))_{n \ge 1}$ are Hurwitz stable.
\end{prop}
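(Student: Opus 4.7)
The plan is to apply Theorem \ref{thm+Turan} along the same lines as Proposition \ref{prop+alt+run+turan+A}, since the recurrence for $Z_n(x)$ is structurally the same as that for the type $A$ alternating runs polynomial $R_n(x)$. The first step is to invoke (or verify by an inductive interlacing argument analogous to that in \cite{MW08}) the known fact from \cite{Zha11} that every zero of $Z_n(x)$ is real and lies in $[-1,0]$, and that $Z_{n-1}(x) \ll Z_n(x)$ for all $n \ge 2$. This secures the hypothesis of Theorem \ref{thm+Turan} on the input sequence $\mathcal{Z} = (Z_n(x))_{n \ge 1}$.

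Next I would compute the quantity in \eqref{dec+root}. With $p_n(x) = (2n-2)x^2 + 3x - 1$ and $q(x) = 2x(1-x^2)$, one has $p_n(x) - p_{n-1}(x) = 2x^2$, whence
$$
(x-r_k)[p_n(x) - p_{n-1}(x)] + q(x)
 = 2x^2(x-r_k) + 2x(1-x^2)
 = 2x(1 - r_k x).
$$
Using the identity $1 - r_k x = (1-r_k^2) - r_k(x-r_k)$, this fits the shape of \eqref{dec+root} with $h_n(x) = 2x$, $a_{k_0} = 1 - r_k^2$, $a_{k_1} = -r_k$, and $a_{k_2} = a_{k_3} = 0$.

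Finally I would check the sign conditions in Theorem \ref{thm+Turan}. Since each $r_k \in [-1,0]$, every element of $\bigcup_{k=1}^{n}\{-a_{k_3},\, a_{k_1},\, a_{k_0}\}$ is nonnegative, and for $x > 0$ the expression $2x\bigl[(1-r_k^2) - r_k(x-r_k)\bigr]$ is likewise nonnegative. As $h_n(x) = 2x$ is Hurwitz stable, Theorem \ref{thm+Turan} yields the Hurwitz stability of $\mathfrak{I}_n(\mathcal{Z}; x)$ for every $n \ge 1$.

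The only genuine obstacle is the real-rootedness and interlacing input in the first step, since the recurrence for $Z_n(x)$ does not fit cleanly under either condition (1) or (2) of Theorem \ref{thm+RS} (the discriminant-type inequality in (2) fails from $n \ge 3$ onwards); one therefore has to rely on the external result of \cite{Zha11} or produce a separate inductive interlacing argument. Once the interlacing $Z_{n-1}(x) \ll Z_n(x)$ and the root localization in $[-1,0]$ are in hand, the remaining computation duplicates the type $A$ case and Theorem \ref{thm+Turan} closes the proof.
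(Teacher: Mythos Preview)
Your proposal is correct and follows exactly the approach the paper intends: the paper omits the proof, saying only that it ``is similar to that of Proposition~\ref{prop+alt+run+turan+A}'', and your computation reproduces that type~$A$ argument verbatim with $p_n(x)-p_{n-1}(x)=2x^2$, $h_n(x)=2x$, $a_{k_0}=1-r_k^2$, $a_{k_1}=-r_k$. The only external input you need is the interlacing $Z_{n-1}(x)\ll Z_n(x)$ with zeros in $[-1,0]$, which the paper likewise takes for granted (it is implicit in the literature, e.g.\ \cite{Z18ejc} or by the standard inductive argument you mention); once that is accepted, Theorem~\ref{thm+Turan} applies exactly as you describe.
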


The proof is similar to that of Proposition
\ref{prop+alt+run+turan+A}, thus we omit it for brevity.

\section{Semi-\texorpdfstring{$\gamma$-} ppositivity and Hurwitz stability}

The location of zeros of polynomials implies much information. For
example, the well-known Newton inequalities say that if all zeros of
a polynomial are real and nonpositive, then its coefficients are
log-concave and unimodal. Moreover, Br\"and\'en in \cite{Bra04}
proved that if all zeros of a symmetric polynomial are real and
nonpositive, then the polynomial has $\gamma$-positivity. In this
section, we will demonstrate a similar result concerning Hurwitz
stability and semi-$\gamma$-positivity.

For $f(x)=\sum_{k=0}^n f_k x^k \in \mathbb{R}[x]$, we say $f(x)$ is
{\it unimodal} if there exists $ m$ such that $f_0\le
f_1\le\cdots\le f_m\ge\cdots\ge f_{n-1}\ge f_n$ and is {\it
symmetric} if $f_k=f_{n-k}$ for $0\leq k\leq n$. Clearly, $f(x)$ is
symmetric if and only if $f(x)=x^nf(1/x)$. We know that any
symmetric polynomial $f(x)$ has the following decomposition:
$$
f(x)= \sum\limits_{k=0}^{\left \lfloor n/2 \right \rfloor} g_k
x^k(1+x)^{n-2k}.
$$
If $g_k \ge 0$ for all $0 \le k \le n$, then we say that $f(x)$ is
{\it $\gamma$-positive}. In particular, $\gamma$-positivity implies
unimodality. Furthermore, in terms of parity of $n$, one can write
$f(x)$ as
\begin{eqnarray*}
  f(x) &=& (1+x)^{\chi(n~mod~2)}\sum\limits_{k=0}^{\left \lfloor n/2 \right \rfloor} g_k x^k(1+x^2)^{\left
           \lfloor n/2 \right \rfloor-k}.
\end{eqnarray*}

Based on these, Ma et al. \cite{MMY20} introduced the following
semi-$\gamma$-positivity.
\begin{definition}
Let $\nu=0$ or $1$. If a polynomial
\begin{eqnarray}\label{sem+des}
 f(x) &=& (1+x)^{\bm{\nu}}\sum_{k=0}^n g_k x^k(1+x^2)^{n-k}
\end{eqnarray}
and $g_k \ge 0$ for all $0 \le k \le n$, then we say that
$f(x)$ is {\it semi-$\gamma$-positive}.
\end{definition}

Corresponding to $f(x)$, define a polynomial $g(x)$ by
\begin{eqnarray}\label{sem+g}
g(x)=\sum_{k=0}^{n}g_k x^k.
\end{eqnarray}

In order to show the $\gamma$-positivity of $f(x)$, it is a useful
approach to verifying whether all zeros of $g(x)$ are nonpositive.
The reason is from the next result (see \cite[Remark 3.1.1]{Gal05})
\begin{prop}
Let $f(x)\in \mathbb{R}[x]$ with symmetric coefficients. Then $f(x)$
has nonnegative coefficients and only real zeros if and only if so
does $g(x)$.
\end{prop}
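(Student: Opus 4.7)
The plan is to pass through the rational-function identity
\[
f(x) = (1+x)^n \, g\!\left(\frac{x}{(1+x)^2}\right),
\]
valid for $x \neq -1$ whenever $f(x) = \sum_{k} g_k x^k (1+x)^{n-2k}$: divide both sides by $(1+x)^n$ and collect. This identity converts zeros of $f$ into preimages, under the rational map $\phi(x) := x/(1+x)^2$, of zeros of $g$, so the problem reduces to understanding $\phi$ on the real line.

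First I would catalog $\phi$ on $\mathbb{R}$. A short calculation gives $\phi(\mathbb{R}) = (-\infty, 1/4]$, with $\phi(1) = 1/4$ the unique critical value and $\phi$ blowing up only at $x = -1$. For any $y < 1/4$ the preimage equation $y(1+x)^2 = x$ is a quadratic $y x^2 + (2y-1)x + y = 0$ with discriminant $1 - 4y > 0$, so $\phi^{-1}(y)$ consists of two distinct real numbers; when $y \le 0$, Vieta's formulas force both preimages to be nonpositive, and for $y < 0$ they form a true reciprocal pair. This sets up a two-to-one correspondence between real negative reciprocal pairs $\{r, 1/r\}$ and real values $y \in (-\infty, 0)$.

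With this, the forward direction runs as follows. Assume $f$ has nonnegative coefficients and only real zeros. By symmetry plus nonnegativity, the real zeros of $f$ come in reciprocal pairs together with a possible $(1+x)^{2t}$-factor; splitting off the latter, each pair $\{r, 1/r\}$ produces one zero $\phi(r) \le 0$ of $g$, and these account for all zeros of $g$. Hence $g$ is a product of linear factors with nonpositive real zeros, and by tracking leading coefficients back from $f$ one gets that $g$ itself has nonnegative coefficients. For the backward direction, nonnegativity of the coefficients of $f$ is automatic from the $\gamma$-positive expansion. For real-rootedness, nonnegativity of $g$'s coefficients forces every zero of $g$ into $(-\infty, 0] \subset (-\infty, 1/4]$, and by the preimage analysis each such zero gives two real zeros of $f$; together with the extra copies of $-1$ coming from the $(1+x)^{n - 2\deg g}$-factor, these exhaust all $n$ zeros of $f$.

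The main obstacle is careful bookkeeping of multiplicities and leading-coefficient signs through $\phi$: one has to handle the branch point $y = 1/4$, the pole $x = -1$, and the boundary $y = 0$ (which corresponds to $g_0 = 0$, excluded when $g$ has positive leading coefficient and nonzero constant term) so that the correspondence between zero multiplicities of $f$ and $g$ is exact. Once this is in place, both directions reduce to the elementary fact that a real-rooted real polynomial has all nonnegative coefficients if and only if its zeros are all nonpositive and its leading coefficient is positive, applied in parallel to $f$ and $g$.
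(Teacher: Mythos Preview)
The paper does not actually prove this proposition; it is quoted without argument from \cite[Remark~3.1.1]{Gal05}. So there is no in-paper proof to compare against.

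Your approach via the identity $f(x)=(1+x)^n\,g\bigl(x/(1+x)^2\bigr)$ and the analysis of the rational map $\phi(x)=x/(1+x)^2$ is correct and is the standard route to this result. One small slip: you write that the real zeros of $f$ consist of reciprocal pairs together with a factor $(1+x)^{2t}$, but for a symmetric polynomial of degree $n$ the multiplicity of $-1$ has the same parity as $n$ and so need not be even. This does not damage the argument, since working directly from the identity the order of vanishing of $f$ at $x=-1$ is exactly $n-2\deg g$, which handles both parities uniformly. With that adjustment, and the edge-case bookkeeping you already flag (the pole at $x=-1$, the branch point $y=1/4$, and the degenerate preimage at $y=0$, the last of which is excluded because $g(0)=g_0=f_n\neq 0$), the proof goes through.
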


For a symmetric polynomial with nonnegative coefficients, in analogy
to this relation between $\gamma$-positivity and real-rootedness, we
give a criterion for semi-$\gamma$-positivity and Hurwitz stability
as follows.

\begin{thm}\label{thm+sem+gamma}
Let $f(x)$ and $g(x)$ be defined as \eqref{sem+des} and
\eqref{sem+g}, respectively. Then $f(x)$ is Hurwitz stable if and
only if so is $g(x)$. In particular, if $f(x)$ is Hurwitz stable and
its leading coefficient is positive, then $f(x)$ is semi-$\gamma$-positive.
\end{thm}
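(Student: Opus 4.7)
The plan is to reduce the Hurwitz stability of $f$ to that of $g$ through the closed-form identity
$$
f(x)=(1+x)^{\nu}(1+x^{2})^{n}\,g\!\left(\frac{x}{1+x^{2}}\right),
$$
which one obtains by pulling $(1+x^{2})^{n}$ out of the sum in \eqref{sem+des}. Set $\phi(z):=z/(1+z^{2})$. The prefactor $(1+z)^{\nu}(1+z^{2})^{n}$ vanishes only at $-1$ and $\pm i$, all of which lie in the closed left half-plane, so on the open right half-plane $f(z)=0$ if and only if $g(\phi(z))=0$. The whole problem therefore becomes a question about the image of the open right half-plane under $\phi$.

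The central step is to show that $\phi$ maps the open right half-plane onto itself (in fact $2$-to-$1$). A short computation with $z=a+bi$ gives
$$
\Re\phi(z)=\frac{a\bigl(1+|z|^{2}\bigr)}{|1+z^{2}|^{2}},
$$
so $a>0$ forces $\Re\phi(z)>0$; combined with the odd symmetry $\phi(-z)=-\phi(z)$, this means that $\phi$ strictly preserves each open half-plane. For surjectivity, fix $w$ with $\Re w>0$ and solve $\phi(z)=w$, i.e.\ the quadratic $wz^{2}-z+w=0$. Its two roots $z_{1},z_{2}$ satisfy $z_{1}z_{2}=1$, so being reciprocal complex numbers they have real parts of equal sign; that common sign must be positive, for otherwise half-plane invariance would force $\Re w\le 0$. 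I expect this surjectivity to be the main obstacle, but the relation $z_{1}z_{2}=1$ combined with the oddness of $\phi$ keeps it short.

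With these properties of $\phi$ in hand, the equivalence is immediate: if $g$ is Hurwitz stable and $\Re z>0$, then $\Re\phi(z)>0$ gives $g(\phi(z))\ne 0$ and hence $f(z)\ne 0$; conversely, if $g(w)=0$ with $\Re w>0$, surjectivity yields a zero of $f$ with positive real part. For the \emph{in particular} statement, I would use the standard fact that a real Hurwitz stable polynomial factors as a product of real linear factors $(x+a)$ with $a\ge 0$ and real quadratic factors $(x^{2}+bx+c)$ with $b,c\ge 0$, so its nonzero coefficients share a common sign. Reading off the asymptotics of the displayed identity as $x\to+\infty$, where $\phi(x)\to 0^{+}$, matches the leading coefficient of $f$ with the first nonzero $g_{k}$; positivity of the former forces the common sign to be $+$, hence every $g_{k}\ge 0$, which is precisely the semi-$\gamma$-positivity of $f$.
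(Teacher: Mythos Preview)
Your proof is correct and follows the same route as the paper: both hinge on the identity $f(x)=(1+x)^{\nu}(1+x^{2})^{n}g\bigl(x/(1+x^{2})\bigr)$ and the observation that the M\"obius-type map $\phi(z)=z/(1+z^{2})=1/(z+1/z)$ preserves the sign of the real part. You are in fact slightly more thorough than the paper, which simply asserts the equivalence from $\Re\phi(z)\cdot\Re z>0$ without spelling out the surjectivity of $\phi$ onto the open right half-plane that your quadratic argument supplies.
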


\begin{proof}
By \eqref{sem+des}, we have
\begin{eqnarray*}
      f(x)
  &=& (1+x)^{\nu}(1+x^2)^{n}g\left(\frac{x}{1+x^2}\right)
      =(1+x)^{\nu}(1+x^2)^{n}g\left(\frac{1}{x+\frac{1}{x}}\right).
\end{eqnarray*}
Let $z =\frac{1}{x+\frac{1}{x}}. $ Obviously, $\Re(z)\Re(x)>0$. In
consequence, we immediately get that the Hurwitz stability of $f(x)$
is equivalent to that of $g(x)$.

In particular, if $f(x)$ is Hurwitz stable and its leading
coefficient is positive, then so is $g(x)$. Thus $g_k \ge 0$ for all
$k $. That is to say that $f(x)$ is semi-$\gamma$-positive.
\end{proof}

Generally speaking, $\gamma$-positivity is stronger than
semi-$\gamma$-positivity. Thus, for a symmetric polynomial $f(x)$,
it may have the semi-$\gamma$-positivity when it is not
$\gamma$-positive. Some such examples will be arranged as follows.

\subsection{Alternating runs of Stirling permutations}

For the generating function $\mathscr{R}_n(x)$ in
\eqref{rec+gf+alt+run+Stir+Perm} of the number of a dual set of
Stirling permutations of order $n$ with $k$ alternating runs, Ma et
al. \cite[Theorem 19]{MMY20} proved the following result concerning
semi-$\gamma$-positivity by using context-free grammars. Obviously,
it is immediate from our Proposition \ref{prop+MW} and Theorem
\ref{thm+sem+gamma}.

\begin{coro}
The polynomial $\mathscr{R}_n(x)$ is semi-$\gamma$-positive.
\end{coro}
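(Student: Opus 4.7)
The plan is to derive the corollary directly from Proposition~\ref{prop+MW} and Theorem~\ref{thm+sem+gamma}. The only nontrivial preliminary is to verify that $\mathscr{R}_n(x)$ fits into the framework \eqref{sem+des}, which reduces to the palindromic identity $x^{2n}\mathscr{R}_n(1/x) = \mathscr{R}_n(x)$ — equivalently, that $\mathscr{R}_n(x)/x$ is a palindromic polynomial of degree $2n-2$. A quick check on $\mathscr{R}_2(x) = x+x^2+x^3$ and $\mathscr{R}_3(x) = x+3x^2+7x^3+3x^4+x^5$ confirms this pattern.

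I would establish this palindromicity by induction on $n$ from the recurrence \eqref{rec+gf+alt+run+Stir+Perm}. The base cases are immediate; for the inductive step, substituting $y=1/x$ into the recurrence and multiplying through by $x^{2n+2}$, the inductive hypothesis together with its differentiated form
\[
x^{2n-2}\mathscr{R}_n'(1/x) = 2n\,\mathscr{R}_n(x)/x - \mathscr{R}_n'(x)
\]
converts the right-hand side exactly into $(2nx+1)x\mathscr{R}_n(x) + x(1-x^2)\mathscr{R}_n'(x) = \mathscr{R}_{n+1}(x)$. Once palindromicity is in hand, $\mathscr{R}_n(x)$ lies in the span of the palindromic basis $\{x^k(1+x^2)^{n-k}\}_{k=0}^n$, yielding a unique expansion $\mathscr{R}_n(x) = \sum_{k=0}^n g_k x^k(1+x^2)^{n-k}$ that matches \eqref{sem+des} with $\nu = 0$. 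Evaluating at $x=0$ forces $g_0 = 0$, and comparison of the coefficient of $x^{2n-1}$ on both sides shows $g_1$ equals the (positive) leading coefficient of $\mathscr{R}_n(x)$.

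Now Proposition~\ref{prop+MW} asserts that $\mathscr{R}_n(x)$ is Hurwitz stable, so Theorem~\ref{thm+sem+gamma} forces $g(x) = \sum_k g_k x^k = x\tilde g(x)$ to be Hurwitz stable; hence $\tilde g(x) = g_1 + g_2 x + \cdots + g_n x^{n-1}$ is Hurwitz stable with positive constant term $g_1$. The classical fact that a real Hurwitz stable polynomial has all coefficients of the same sign as its constant term then yields $g_k \ge 0$ for every $k$, which is precisely the semi-$\gamma$-positivity of $\mathscr{R}_n(x)$. The main obstacle is the palindromicity induction, which is routine but a bit delicate because of the differential term and the need to handle $\mathscr{R}_n'(1/x)$; once it is established, the rest of the argument is a direct invocation of the two earlier results.
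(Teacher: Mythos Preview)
Your approach is correct and is essentially the same as the paper's: both deduce the corollary from Proposition~\ref{prop+MW} (Hurwitz stability of $\mathscr{R}_n$) together with Theorem~\ref{thm+sem+gamma}. The paper simply asserts the result is ``immediate'' from these two facts, whereas you supply the palindromicity verification $x^{2n}\mathscr{R}_n(1/x)=\mathscr{R}_n(x)$ needed to place $\mathscr{R}_n$ in the framework~\eqref{sem+des}, and you also unpack the application of Theorem~\ref{thm+sem+gamma} to handle the $g_0=0$ feature explicitly; these extra details are sound (your induction for symmetry and the identification $g_1=[x^{2n-1}]\mathscr{R}_n(x)>0$ both check out) and make your write-up more self-contained than the paper's one-line proof.
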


\subsection{A class of symmetric polynomials}

Recall \eqref{Rec+main} as follows:
\begin{equation*}
  T_{n+1}(x) = (\bm{\alpha} x^2+\bm{\beta} x+\bm{\gamma})T_{n}(x)
               +(\bm{\mu} x^3+\bm{\nu} x^2+\bm{\varphi} x+\bm{\psi} )D_xT_{n}(x).
\end{equation*}
It is nature to consider the question when is the polynomial
$T_n(x)$ symmetric. Note the fact that a symmetric polynomial $f(x)$
with degree $n$ has the following relation:
\begin{eqnarray}\label{sym+fact}
x^{n+1}D_xf\left(\frac{1}{x}\right)=-nf(x)+xD_xf(x).
\end{eqnarray}
With the help of \eqref{sym+fact}, we obtain a class of symmetric
polynomial $T_n(x)$ satisfying
\begin{eqnarray}\label{Rec+sym}
T_{n+1}(x)=(-m_n\bm{\mu} x^2+\bm{\beta} x+\bm{\beta}+m_n\bm{\nu})T_{n}(x)
           +(\bm{\mu} x^3+\bm{\nu} x^2-\bm{\nu} x-\bm{\mu})D_xT_n(x),
\end{eqnarray}
where $\deg(T_n(x))=m_n$ and $\deg(T_{n+1}(x))=\deg(T_{n}(x))+1$.
In the subsection, we assume $\bm{\mu}\le0\le\bm{\nu}$.

In what follows, we will prove that $T_n(x)$ in (\ref{Rec+sym}) is
Hurwitz stable and semi-$\gamma$-positive. Before it, we need one
criterion for real stability of polynomials.

For multivariate polynomials with real coefficients of degree at
most one, Br\"and\'en gave a criterion about their real stability
(see \cite[Theorem 5.6]{Bra07}). Furthermore, Leake \cite{Lea19}
extended it to general polynomials with real coefficients by Walsh's
coincidence Theorem (see \cite[Theorem 3.4.1.b]{RS02}). We state it
as follows.

\begin{lem}\label{lem+peter}
Let $f \in \mathbb{R}^k[X]$. Then $f$ is real stable if and only if
for all $ i \neq j$ we have
$$
    \Delta_{x_ix_j}=D_{x_i}f \cdot D_{x_j}f-f \cdot D_{x_i}D_{x_j}f \ge 0
$$
and for all $i$ we have
$$
   \Delta_{x_ix_i}=(1-k_i^{-1})(D_{x_i}f)^2-f \cdot D_{x_i}^2 f \ge 0
$$
everywhere in $\mathbb{R}^n$, where $k_i$ is the degree of $x_i$ in
$f$.
\end{lem}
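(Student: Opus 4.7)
The plan is to reduce the claim to Br\"and\'en's multi-affine stability criterion (\cite[Theorem 5.6]{Bra07}) via \emph{polarization}. Given $f \in \mathbb{R}^k[X]$ with $\deg_{x_i}(f) = k_i$, I would introduce $\sum_i k_i$ new variables $\{y_{i,a}: i \in [n],\ 1 \le a \le k_i\}$ and replace every monomial $c_\alpha x^\alpha$ in $f$ by $c_\alpha \prod_i e_{\alpha_i}(y_{i,1},\ldots,y_{i,k_i})/\binom{k_i}{\alpha_i}$, where $e_m$ denotes the $m$-th elementary symmetric polynomial. The resulting polynomial $\tilde f$ is multi-affine, symmetric within each group $\{y_{i,1},\ldots,y_{i,k_i}\}$, and satisfies $\tilde f|_{y_{i,a}=x_i}=f(x)$.

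The first substantive step is to establish that $f$ is real stable if and only if $\tilde f$ is real stable. The backward direction is immediate, since substituting $y_{i,a}=x_i$ preserves stability. For the forward direction, suppose $\tilde f$ had a zero $(z_{i,a})$ with every $z_{i,a}$ in the open upper half-plane. Applying Walsh's coincidence theorem (\cite[Theorem 3.4.1.b]{RS02}) to the symmetric multi-affine polynomial $\tilde f$ in the first group $\{y_{1,a}\}_{a=1}^{k_1}$ produces a common value $w_1$ lying in the convex hull of $\{z_{1,a}\}_{a}$ (hence still in the upper half-plane) at which $\tilde f$ vanishes. Iterating through the remaining groups collapses every putative zero of $\tilde f$ in the open upper half-plane product to a zero of $f$ there, contradicting real stability of $f$.

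Now apply Br\"and\'en's criterion to $\tilde f$: being multi-affine, $\tilde f$ is real stable iff $\Delta_{y_{i,a}y_{j,b}}(\tilde f) \ge 0$ on all of $\mathbb{R}^{\sum_i k_i}$ for every pair of distinct polarized indices. To match these against the stated $f$-inequalities, I would restrict to the diagonal $y_{i,a}=x_i$ for all $i,a$. Using $D_{y_{i,a}} e_m(y_{i,\cdot})\big|_{\text{diag}} = \binom{k_i-1}{m-1} x_i^{m-1}$ (and the analogous identity for two derivatives), a direct computation gives $D_{y_{i,a}}\tilde f\big|_{\text{diag}} = k_i^{-1} D_{x_i} f$ and, for $a\ne b$, $D_{y_{i,a}}D_{y_{i,b}}\tilde f\big|_{\text{diag}} = [k_i(k_i-1)]^{-1} D_{x_i}^2 f$. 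Substituting yields
$$
\Delta_{y_{i,a}y_{j,b}}(\tilde f)\big|_{\text{diag}} = \frac{\Delta_{x_ix_j}(f)}{k_ik_j}\quad(i\ne j),\qquad
\Delta_{y_{i,a}y_{i,b}}(\tilde f)\big|_{\text{diag}} = \frac{(1-k_i^{-1})(D_{x_i}f)^2 - f\cdot D_{x_i}^2 f}{k_i(k_i-1)}\quad(a\ne b),
$$
so the two families of $f$-inequalities in the lemma are precisely the diagonal traces of Br\"and\'en's $\tilde f$-inequalities, confirming necessity.

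The main obstacle is the sufficiency direction: the diagonal identities above only certify the $\tilde f$-discriminants along the diagonal, whereas Br\"and\'en's criterion requires them throughout $\mathbb{R}^{\sum_i k_i}$. One natural route is to exploit the group-wise symmetry of $\tilde f$ via averaging or Schur-convexity arguments to upgrade diagonal positivity to global positivity. Alternatively, one may bypass the polarization in this direction by verifying real-rootedness of $p(t)=f(u+tv)$ directly for every $u\in\mathbb{R}^n$ and $v\in\mathbb{R}^n_{>0}$: the chain-rule expansion $(p')^2-pp'' = \sum_{i,j} v_iv_j\, \Delta_{x_ix_j}(f)(u+tv)$ feeds the diagonal bounds into the univariate Laguerre-type characterization $(d-1)(p')^2\ge d\,p\,p''$ of real-rootedness, with the weight $1-k_i^{-1}$ in the hypothesis precisely calibrated against the degree bound $d\le\sum_i k_i$. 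This calibration is the nontrivial heart of Leake's extension of Br\"and\'en's multi-affine theorem.
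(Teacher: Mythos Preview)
The paper does not prove this lemma; it merely quotes it from Leake's thesis, noting that Leake extended Br\"and\'en's multi-affine criterion via Walsh's coincidence theorem. Your plan---polarize $f$ to a multi-affine $\tilde f$, transfer stability between $f$ and $\tilde f$ by Walsh, then apply \cite[Theorem~5.6]{Bra07}---is exactly the route the paper attributes to Leake, so at the level of strategy there is nothing to compare.

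On the substance: the polarization equivalence and the diagonal computations are correct, and necessity follows. Sufficiency, however, is a genuine gap in your write-up, and neither proposed patch closes it. The ``averaging/Schur-convexity'' suggestion is a hope, not an argument. The univariate route fails outright, because the inequality you call a ``characterization'' is not one: the condition $(d-1)(p')^2\ge d\,p\,p''$ on all of $\mathbb{R}$ is \emph{necessary} for real-rootedness (Cauchy--Schwarz applied to $p'/p=\sum(t-r_i)^{-1}$) but not \emph{sufficient}. A clean counterexample is $p(t)=t^4-1$, with roots $\pm 1,\pm i$: here $p'=4t^3$, $p''=12t^2$, and
\[
(d-1)(p')^2-d\,p\,p''\;=\;3\cdot 16t^6-4(t^4-1)\cdot 12t^2\;=\;48t^2\;\ge\;0
\]
everywhere, yet $p$ is not real-rooted. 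Your chain-rule identity $(p')^2-pp''=\sum_{i,j}v_iv_j\Delta_{x_ix_j}(f)$ together with the hypotheses and Cauchy--Schwarz does yield $(K-1)(p')^2\ge K\,p\,p''$ with $K=\sum_i k_i$, but since that inequality does not force real-rootedness, the argument stalls. The ``calibration'' you allude to must therefore involve more than a single Laguerre-type bound; as written, the sufficiency direction has been deferred rather than supplied.
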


Now, we give the result for Hurwitz stability of $T_n(x)$ as follows.

\begin{thm}\label{thm+sym+HS}
Let $T_n(x)$ be defined by \eqref{Rec+sym} and $\deg(T_n(x))=m_n$.
Assume that $T_0(x)$ is Hurwitz stable. If $\bm{\mu}+\bm{\nu}\le0$
and $2\bm{\beta}+m_n(\bm{\mu}+\bm{\nu})\ge0$, then $T_n(x)$ is Hurwitz
stable and semi-$\gamma$-positive.
\end{thm}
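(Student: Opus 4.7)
The plan is to apply the characterization of linear operators preserving Hurwitz stability (Theorem~\ref{thm+HS+peter}) inductively, and then recover semi-$\gamma$-positivity from Theorem~\ref{thm+sem+gamma}. The base case $n=0$ is the hypothesis that $T_0(x)$ is Hurwitz stable. For the inductive step I would consider the operator
\[
T:=(-m_n\bm{\mu}\,x^2+\bm{\beta}\,x+\bm{\beta}+m_n\bm{\nu})I+(\bm{\mu}\,x^3+\bm{\nu}\,x^2-\bm{\nu}\,x-\bm{\mu})D_x
\]
producing $T_{n+1}(x)$ from $T_n(x)$; by Theorem~\ref{thm+HS+peter}(b) it is enough to show the symbol $T[(1+xy)^{m_n}]$ is Hurwitz stable. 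A direct expansion using $D_x(1+xy)^{m_n}=m_n y(1+xy)^{m_n-1}$ factors
\[
T[(1+xy)^{m_n}]=(1+xy)^{m_n-1}\,P(x,y),\quad P(x,y):=(ax^2+bx+c)+y(cx^2+bx+a),
\]
where $a:=-m_n\bm{\mu}$, $b:=\bm{\beta}$ and $c:=\bm{\beta}+m_n\bm{\nu}$. Since $(1+xy)^{m_n-1}$ is Hurwitz stable (for $\Re x,\Re y>0$ one has $\arg(xy)\in(-\pi,\pi)$, so $1+xy\ne 0$), the task reduces to the Hurwitz stability of the bivariate polynomial $P(x,y)$.

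Unpacking the hypotheses $\bm{\mu}\le0\le\bm{\nu}$, $\bm{\mu}+\bm{\nu}\le0$ and $2\bm{\beta}+m_n(\bm{\mu}+\bm{\nu})\ge0$ yields $a,b,c\ge 0$ together with the three triangle-type inequalities $a+b\ge c$, $b+c\ge a$, $a+c\ge b$, arising from $\bm{\mu}+\bm{\nu}\le0$, the main hypothesis, and $\bm{\nu}\ge\bm{\mu}$ respectively. To establish Hurwitz stability of $P(x,y)$ I would apply the Cayley substitution $x=(1-u)/(1+u)$, $y=(1-v)/(1+v)$, which conformally maps the product of open right half-planes onto the open bidisk $\{|u|<1,|v|<1\}$. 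Clearing common denominators one arrives at
\[
P(x,y)=\frac{2}{(1+u)^2(1+v)}\bigl[(a+b+c)+(a-b+c)u^2+2(c-a)uv\bigr],
\]
so the stability of $P$ is equivalent to Schur stability of $\widetilde P(u,v):=(a+b+c)+(a-b+c)u^2+2(c-a)uv$ on the open bidisk. The reverse triangle inequality gives
\[
|\widetilde P(u,v)|\ge(a+b+c)-(a-b+c)|u|^2-2|c-a|\,|u||v|,
\]
and since $|u|,|v|<1$ strictly, the right-hand side is strictly greater than $(a+b+c)-(a-b+c)-2|c-a|=2(b-|c-a|)\ge0$ whenever the contributing coefficients are nontrivial; in the degenerate situation $a=c$ with $b=a+c$, the polynomial $\widetilde P$ collapses to the nonzero constant $a+b+c$. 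Hence $\widetilde P\ne0$ throughout the open bidisk, $P(x,y)$ is Hurwitz stable, $T$ preserves Hurwitz stability, and induction yields that every $T_n(x)$ is Hurwitz stable.

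Finally, the recurrence \eqref{Rec+sym} was constructed from the symmetry identity \eqref{sym+fact} precisely so that $T_{n+1}$ inherits the symmetry of $T_n$; starting from a symmetric $T_0$, induction keeps every $T_n(x)$ symmetric of degree $m_n$. Combining this symmetry with the Hurwitz stability just obtained and Theorem~\ref{thm+sem+gamma} immediately yields semi-$\gamma$-positivity of $T_n(x)$. The main obstacle I anticipate is the stability step for $P(x,y)$: the Cayley substitution must be executed carefully so that the three coefficients $(a+b+c)$, $(a-b+c)$ and $2(c-a)$ emerge cleanly, since they line up one-for-one with the three triangle inequalities supplied by the hypotheses, and without that precise algebraic form the reverse triangle inequality estimate would not close.
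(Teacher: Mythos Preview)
Your argument is correct and takes a genuinely different route from the paper's.  Both proofs reduce to the Hurwitz stability of the bivariate symbol $T[(1+xy)^{m_n}]$ via Theorem~\ref{thm+HS+peter}, but diverge after that.

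The paper rewrites the cofactor through a sequence of algebraic manipulations into the shape
\[
\bigl[(1-x)^2+r(1+x)^2\bigr](1+y)+s(1+x)(1+xy),
\]
then substitutes $x\mapsto -ix$, $y\mapsto -iy$ to pass to real stability, invokes Hermite--Biehler to introduce a third variable $z$, and finally verifies four discriminant inequalities $\Delta_{xy},\Delta_{xz},\Delta_{yz},\Delta_{xx}\ge 0$ using Lemma~\ref{lem+peter}.  You instead isolate the palindromic structure $P(x,y)=(ax^2+bx+c)+y(cx^2+bx+a)$, observe that the hypotheses translate exactly into the triangle inequalities $a+b\ge c$, $b+c\ge a$, $a+c\ge b$, and then a Cayley transform to the bidisk together with a one–line reverse triangle estimate finishes the job.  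This is considerably shorter, avoids the external machinery of Lemma~\ref{lem+peter} entirely, and makes transparent why precisely the two inequalities $\bm{\mu}+\bm{\nu}\le 0$ and $2\bm{\beta}+m_n(\bm{\mu}+\bm{\nu})\ge 0$ are needed.  The paper's approach, on the other hand, plugs into a general criterion that may be more reusable in settings where the symbol does not simplify so neatly.

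One small point worth tightening in your write-up: the strict inequality $|\widetilde P|>2(b-|c-a|)$ you claim requires at least one of $a-b+c$ or $|c-a|$ to be nonzero, and the residual cases (e.g.\ $a=c$, $b=0$, $a>0$, where $\widetilde P=2a(1+u^2)$) are easily handled but deserve a line each rather than being folded into a single ``degenerate situation''.
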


\begin{proof}

We will prove that $T_{n}(x)$ is Hurwitz stable by
induction on $n$, by Theorem \ref{thm+sem+gamma}, which implies that
$T_{n}(x)$ is semi-$\gamma$-positive. Let
$$
T := (-m_n\bm{\mu} x^2+\bm{\beta} x+\bm{\beta}+m_n\bm{\nu})I+(\bm{\mu} x^3+\bm{\nu} x^2-\bm{\nu} x-\bm{\mu})D_x.
$$
We only need to prove that $T$ preserves Hurwitz stability. By
Theorem \ref{thm+HS+peter}, it is equivalent to prove that the
following polynomial
\begin{eqnarray*}\label{symbol}
  & & T(1+xy)^{m_n}                                                                                 \\
  &=& (1+xy)^{m_n-1}[(-m_n\bm{\mu} x^2+\bm{\beta} x+\bm{\beta}+m_n\bm{\nu})(1+xy)
      +m_n(\bm{\mu} x^3+\bm{\nu} x^2-\bm{\nu} x-\bm{\mu})y]                                         \\
  &=& -m_n\bm{\mu}(1+xy)^{m_n-1}\left\{\left(x^2-\frac{\bm{\beta}}{m_n\bm{\mu}}x
      -\frac{\bm{\beta}}{m_n\bm{\mu}}-\frac{\bm{\nu}}{\bm{\mu}}\right)(1+xy)
      +(1-x)\left[1+(1+\frac{\bm{\nu}}{\bm{\mu}})x+x^2\right]y\right\}                              \\
  &=& \left\{\left[(1-x)^2
      +\frac{3\bm{\mu}+\bm{\nu}}{\bm{\mu}-\bm{\nu}}(1+x)^2\right](1+y)
      -\frac{4[\bm{\beta}+m_n(\bm{\mu}+\bm{\nu})]}{m_n(\bm{\mu}-\bm{\nu})}(1+x)(1+xy)\right\}\times \\
  & & \frac{m_n(\bm{\nu}-\bm{\mu})(1+xy)^{m_n-1}}{4}
\end{eqnarray*}
is Hurwitz stable. This is immediate from the next claim.
\begin{cl} For any $r \ge 1 $ and $r+s \ge 1$, the bivariate polynomial
$$
   [(1-x)^2+r(1+x)^2](1+y)+s(1+x)(1+xy)
$$
is Hurwitz stable.
\end{cl}

\begin{proof}

Let $x=-ix, y=-iy$. That is equivalent to show that the right hand
side of the below equality
\begin{eqnarray*}
 & & [(1+ix)^2+r(1-ix)^2](1-iy)+s(1-ix)(1-xy)\\
 &=& (r+1)(1-x^2)-2(r-1)xy+s(1-xy)-[(r-3)x+(r+1)y+(r+s+1)x(1-xy)]i
\end{eqnarray*}
is stable. By Theorem \ref{thm+HB}, it is enough to prove that
\begin{eqnarray*}
(r+1)(1-x^2)-2(r-1)xy+s(1-xy)-[(r-3)x+(r+1)y+(r+s+1)x(1-xy)]z
\end{eqnarray*}
is real stable. By computing, we get
\begin{eqnarray*}
  \Delta_{xy} &=& s(r+s+1)(1-xz)^2+s(r+1)(x-z)^2                               \\
              & & +2(r-1)(r+s+1)(1+x^2z^2)+2(r^2-1)(x^2+z^2) \ge 0,            \\
  \Delta_{xz} &=& s(r+s+1)(1-xy)^2+s(r+1)(x-y)^2                               \\
              & & +2(r-1)(r+s+1)(1+x^2y^2)+2(r^2-1)(x^2+y^2) \ge 0,            \\
  \Delta_{yz} &=& (r+1)(r+s+1)(1-x^2)^2+4(r-1)(r+s-1)x^2     \ge 0,            \\
  \Delta_{xx} &=& 2(2r+s-2)^2(y^2+z^2)+8(r+1)(r+s+1)(1+y^2z^2)-4(s^2+8s+16r)yz \\
              &\ge&  32(r-1)(r+s-1)|yz| \\
              &\ge& 0
\end{eqnarray*}
for any $x, y, z \in \mathbb{R}$ and $r\ge1, r+s\ge1$. Hence,
according to Lemma \ref{lem+peter}, which confirms the claim.
\end{proof}
Therefore, we complete the proof.
\end{proof}

A polynomial sequence $(f_n(q))_{n\ge0}$ is called {\it strongly
$q$-log-convex} if
$$
f_{n+1}(q)f_{m-1}(q)-f_n(q)f_m(q)
$$
has only nonnegative coefficients for any $n \ge m \ge 1$. See
\cite{CWY11, Zhu13} for the details concerning the development of strong
$q$-log-convexity. In the following context, we assume $\delta \in \mathbb{N}^+$.

\begin{thm}\label{thm+symm+T}
Let $T_n(x)$ be defined by \eqref{Rec+sym}, where all $\beta, \mu, \nu$ are
real numbers and $m_n=n-\delta+1$. If $\mu+\nu=0$,
then we have
\begin{itemize}
  \item [\rm (i)]
  its exponential generating function is
  $$
    \sum\limits_{n\ge0}T_{n+\delta-1}(x)\frac{t^n}{n!}
  =
  \frac{(1-x)^{\beta/\nu}}{[(1-x)\cos(\nu(1-x)t)-(1+x)\sin(\nu(1-x)t)]^{\beta/\nu}};
  $$
  \item [\rm (ii)]
  its ordinary generating function
  has the Jacobi continued fraction expansion
    \begin{eqnarray*}
      \sum\limits_{n=0}^{\infty}T_{n+\delta-1}(x) t^n=\DF{1}{1- r_0t-
      \DF{s_1t^2}{1-r_1t-\DF{s_2t^2}{1- r_2t-\ldots}}},
    \end{eqnarray*}
  where $r_i=(2\nu i+\beta)(1+x)$ and $s_i=2\nu i[\beta+\nu(i-1)](1+x^2)$
  for $i\geq0$;
  \item [\rm (iii)]
  the polynomial sequence $(T_n(q))_{n\ge0}$ is strongly
  $q$-log-convex for $\beta\ge 0$;
  \item [\rm (iv)]
  the polynomial $T_n(x)$ is not $\gamma$-positive for
  $n\ge \delta+2$ and $\beta\ge 0$.
\end{itemize}

\end{thm}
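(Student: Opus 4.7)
Let $S_k(x) := T_{k+\delta-1}(x)$, so that $\deg S_k = k$, $S_0 = 1$, and \eqref{Rec+sym} with $\mu = -\nu$ reads
\begin{equation*}
  S_{k+1}(x) = [k\nu(1+x^2) + \beta(1+x)]S_k(x) + \nu(1-x)(1+x^2)D_xS_k(x).
\end{equation*}
My plan is to establish (i) by solving a first-order linear PDE for $F(x,t) := \sum_{k\ge 0} S_k(x)\,t^k/k!$ via the method of characteristics, deduce (ii) by identifying the Jacobi parameters associated to the secant-type EGF produced in (i), obtain (iii) immediately from (ii) via a known criterion, and prove (iv) by evaluating $F$ and $\partial_x F$ at $x = -1$.

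For (i), multiplying the recurrence by $t^k/k!$ and summing yields the PDE
\begin{equation*}
  [1 - \nu(1+x^2)t]\partial_t F - \nu(1-x)(1+x^2)\partial_x F = \beta(1+x)F, \quad F(x,0) = 1.
\end{equation*}
Along the characteristics, the linear ODE $(1-x)\tfrac{dt}{dx} - t = -1/[\nu(1+x^2)]$ (integrating factor $1-x$) produces the first integral $C = (1-x)t + \nu^{-1}\arctan x$, while the partial-fraction identity $\tfrac{1+x}{(1-x)(1+x^2)} = \tfrac{1}{1-x} + \tfrac{x}{1+x^2}$ integrates $dF/F$ to $F = (1-x)^{\beta/\nu}(1+x^2)^{-\beta/(2\nu)} G(C)$. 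Imposing $F(x,0)=1$ and substituting $x = \tan(\nu\xi)$ for $\xi = C|_{t=0}$ gives $G(\xi) = [\cos(\nu\xi) - \sin(\nu\xi)]^{-\beta/\nu}$; the angle-sum identities $\cos(u+\arctan x) = (\cos u - x\sin u)/\sqrt{1+x^2}$ and $\sin(u+\arctan x) = (\sin u + x\cos u)/\sqrt{1+x^2}$ applied with $u = \nu(1-x)t$ then collapse the product to the formula in (i).

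For (ii), with the closed form of the EGF in hand, my approach is to construct the candidate monic polynomials $P_k(y)$ from the three-term recurrence $P_{k+1} = (y - r_k)P_k - s_k P_{k-1}$ with the prescribed $r_k, s_k$, and verify that the linear functional $L(y^n) := S_n(x)$ satisfies $L(P_i P_j) = 0$ for $i\neq j$; Favard's theorem then converts this orthogonality into the claimed Jacobi continued fraction. Equivalently one can verify directly the self-similar identity $H(x,t) = 1/[1 - r_0 t - s_1 t^2 H^{(1)}(x,t)]$ for the OGF $H(x,t) := \sum_k S_k(x) t^k$ (recoverable from (i) via a Borel-type transform), where $H^{(1)}$ is the OGF attached to the index-shifted J-fraction. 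Assertion (iii) is then immediate from the standard criterion \cite{CWY11, Zhu13}: if a polynomial sequence admits a J-fraction whose $r_i$ and $s_i$ lie in $\mathbb{R}^{\ge 0}[q]$, the sequence is strongly $q$-log-convex; under $\beta \ge 0$ and the standing $\nu \ge 0$, both $r_i = (2\nu i + \beta)(1+q)$ and $s_i = 2\nu i[\beta + \nu(i-1)](1+q^2)$ plainly qualify.

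For (iv), specializing (i) at $x = -1$ collapses the EGF to $\sec^{\beta/\nu}(2\nu t)$, whose Taylor coefficients are all strictly positive when $\beta, \nu > 0$; differentiating and specializing gives $\partial_x F|_{x=-1} = \tfrac{\beta}{2\nu}(1 - 2\nu t)\tan(2\nu t)\sec^{\beta/\nu}(2\nu t)$. If $T_n(x) = \sum_m \gamma_m x^m (1+x)^{m_n - 2m}$ were $\gamma$-positive with $m_n$ even, only the middle term survives at $x = -1$, forcing $\gamma_{m_n/2}(-1)^{m_n/2} = T_n(-1) > 0$ and hence $\gamma_{m_n/2} < 0$ whenever $m_n \equiv 2 \pmod 4$. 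For $m_n$ odd, $(1+x)\mid T_n$, and the same middle-coefficient argument applied to $T_n/(1+x)$ using $T_n'(-1)$ (whose sign comes from the formula above) kills $\gamma$-positivity when $m_n \equiv 3 \pmod 4$; the residual classes $m_n \equiv 0, 1 \pmod 4$ are treated analogously by reading the next $\gamma$-coefficient off $\partial_x^2 F|_{x=-1}$, and $n \ge \delta+2$ ensures $m_n \ge 3$ so every case is reached. The main obstacle is step (ii): cleanly extracting the exact $r_i, s_i$ from the secant-type EGF (rather than merely verifying a guess), and, secondarily, unifying the mod-$4$ case split of (iv) into a single transparent argument.
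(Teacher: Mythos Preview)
Your approach to (i) via the method of characteristics is different from the paper's (which passes through the substitution $g_n(x)=\tfrac{\delta^n}{2^n}(1+x)^nT_{n+\delta-1}\bigl(\tfrac{x-1}{x+1}\bigr)$ to reduce to a simpler recurrence $g_{n+1}=\beta\delta x g_n+\nu\delta(1+x^2)D_xg_n$ with a secant-type EGF), but it is a legitimate route and should yield the same closed form.

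There is a genuine gap in (iii). The criterion you invoke is misstated: nonnegativity of the Jacobi parameters $r_i(q),\,s_i(q)$ alone does \emph{not} imply strong $q$-log-convexity. The criterion from \cite{Zhu13} that the paper uses requires in addition that every $r_i(q)r_{i+1}(q)-s_{i+1}(q)$ have nonnegative coefficients, and the paper explicitly checks this:
\[
r_ir_{i+1}-s_{i+1}=[2\nu^2 i^2+2\nu(\nu+\beta)i+\beta^2](1+q^2)+2(2\nu i+\beta)[2\nu(i+1)+\beta]q.
\]
Without this verification, your argument for (iii) is incomplete.

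For (ii), which you flag as the main obstacle, the paper avoids working with the OGF directly. Instead it writes $T_{n+\delta-1}(x)=(1+x)^n h_n\bigl(\tfrac{-2x}{(1+x)^2}\bigr)$, then $S_n(x):=h_n(x-1)$, and shows that the coefficients $S_{n,k}$ satisfy the triangular recurrence $S_{n,k}=(2\nu k+\beta)S_{n-1,k}+2\nu(n-2k+1)S_{n-1,k-1}$, whose J-fraction is already known from \cite[(4.10)]{Zhu20aam}. Two harmless variable changes then produce the claimed $r_i,s_i$. This is far cleaner than attempting Favard orthogonality or a Borel transform from scratch.

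For (iv), your mod-$4$ case split via $T_n(-1)$ and $T_n'(-1)$ is incomplete as written (the classes $m_n\equiv 0,1\pmod 4$ are deferred to a second-derivative computation you do not carry out). The paper bypasses all of this: the same substitution used for (ii) gives the $\gamma$-expansion explicitly as $T_n(x)=\sum_{k}(-2)^k\bigl[\sum_i S_{n-\delta+1,i}\binom{i}{k}\bigr]x^k(1+x)^{n-\delta+1-2k}$, and since the $S_{n,i}$ are nonnegative, the $\gamma$-coefficients carry the alternating sign $(-2)^k$, killing $\gamma$-positivity uniformly once $n\ge\delta+2$.
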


\begin{proof}

For (i), define a polynomial $g_n(x)$ for $n\ge0$ by the following relation:
\begin{eqnarray}\label{def+g+T}
g_n(x):=\frac{\delta^n}{2^n}(1+x)^nT_{n+\delta-1}\left(\frac{x-1}{x+1}\right).
\end{eqnarray}

By \eqref{Rec+sym}, then we have a recurrence relation for $g_n(x)$
as follows:
\begin{eqnarray}\label{Rec+g}
g_{n+1}(x)=\beta\delta xg_n(x)+\nu\delta(1+x^2)D_xg_n(x).
\end{eqnarray}
Our aim is to get the exponential generating function of $g_n(x)$.
We first have the following general result.

\begin{cl}\label{cl+expon+f}
Let $\{r,s\}\subseteq\mathbb{R}$ and $\{u,v\}\subseteq\mathbb{R}^{\ge0}$.
Assume that a polynomial sequence $(f_n(x))_{n\geq0}$ satisfies
the following recurrence relation:
\begin{eqnarray}\label{Rec+f}
f_{n+1}(x)=rsxf_n(x)-s(u+vx^2)D_xf_n(x),
\end{eqnarray}
where $f_0(x)=1$. Then the exponential generating function of
$f_n(x)$ is
$$
   \sum\limits_{n\ge0}f_{n}(x)\frac{t^n}{n!}
 = \left[\cos(s\sqrt{uv}t)+\sqrt{v/u}x\sin(s\sqrt{uv}t)\right]^{(r/v)}.
$$
\end{cl}

\begin{proof}

Let the exponential generating function
$$
\mathcal{F}(x,t) :=\sum\limits_{n\ge0}f_{n}(x)\frac{t^n}{n!}.
$$
Then, by \eqref{Rec+f}, we have the next partial differential
equation:
\begin{eqnarray}\label{Rec+F+pde}
\mathcal{F}_t(x,t)=rsx\mathcal{F}(x,t)-s(u+vx^2)\mathcal{F}_x(x,t)
\end{eqnarray}
with the initial condition $\mathcal{F}(x,0)=1$. It is routine
to check that
$$
   \mathcal{F}(x,t)
 = \left[\cos(st\sqrt{uv})+\sqrt{v/u}x\sin(st\sqrt{uv})\right]^{(r/v)}
$$
is a solution of \eqref{Rec+F+pde} with the initial condition.
\end{proof}
In consequence, taking $r=-\beta\delta, s=-1$ and $u=v=\nu\delta$
in \eqref{Rec+f}, then we have the exponential generating
function of $g_n(x)$:
\begin{eqnarray}\label{egf+g}
   \sum\limits_{n\ge0}g_{n}(x)\frac{t^n}{n!}
 = \frac{1}{[\cos(\nu \delta t)-x\sin(\nu \delta t)]^{\beta/\nu}}.
\end{eqnarray}
In addition, it follows from (\ref{def+g+T}) that we have
\begin{eqnarray}\label{Rel+Q+T}
T_{n+\delta-1}(x)=\frac{(1-x)^n}{\delta^n}g_{n}\left(\frac{1+x}{1-x}\right).
\end{eqnarray}
Combining \eqref{egf+g} and \eqref{Rel+Q+T} gives (i).

For (ii), if let
\begin{eqnarray}\label{Rel+T+h}
T_{n+\delta-1}(x)=(1+x)^{n}h_n\left(\frac{-2x}{(1+x)^2}\right)
\end{eqnarray}
for $n\ge0$, then combining \eqref{Rec+sym} and \eqref{Rel+T+h}
derives the recurrence relation of $h_n(x)$ as follows:
\begin{eqnarray*}\label{Rec+h}
   h_{n+1}(x)
 = [2n\nu(x+1)+\beta]h_{n}(x)-2\nu(x+1)(2x+1)D_xh_n(x).
\end{eqnarray*}
Let
\begin{eqnarray}\label{Rel+h+S}
S_n(x)=h_{n}(x-1),
\end{eqnarray}
where $\deg(S_{n}(x))=\lfloor n/2 \rfloor$. Then $S_n(x)$ satisfies
the following recurrence relation:
\begin{eqnarray*}
   S_{n}(x)
 = [2(n-1)\nu x+\beta]S_{n-1}(x)+2\nu x(1-2x)D_xS_{n-1}(x).
\end{eqnarray*}
That is to say, the coefficients $S_{n,k}$ of $S_n(x)$ satisfy
\begin{eqnarray}\label{Rec+S}
S_{n,k}=(2\nu k+\beta)S_{n-1,k}+2\nu(n-2k+1)S_{n-1,k-1},
\end{eqnarray}
where $S_{n,k}=0$ unless $0\le k \le n$ with initial conditions
$S_{0,0}=1$. Then by \cite[(4.10)]{Zhu20aam} we have the Jacobi
continued fraction expansion
\begin{eqnarray}\label{JCF+S}
\sum\limits_{n=0}^{\infty}S_n(x)t^n
=\DF{1}{1- r_0t-\DF{s_1t^2}{1-r_1t-\DF{s_2t^2}{1- r_2t-\ldots}}},
\end{eqnarray}
where $r_i=2\nu i+\beta$ and
$s_i=2\nu i[\nu(i-1)+\beta]x$ for $i\geq0$.

Then by taking $x \rightarrow 1+x$ in \eqref{JCF+S}, we get
\begin{eqnarray}\label{JCF+h}
\sum\limits_{n=0}^{\infty}h_{n}(x)t^n
=\DF{1}{1- r_0t-\DF{s_1t^2}{1-r_1t-\DF{s_2t^2}{1- r_2t-\ldots}}},
\end{eqnarray}
where $r_i=2\nu i+\beta$ and $s_i=2\nu i[\nu(i-1)+\beta](1+x)$ for
$i\geq0$. Moreover, by taking $t \rightarrow (1+x)t$ and $x
\rightarrow \frac{-2x}{(1+x)^2}$ in \eqref{JCF+h}, then we get
\begin{eqnarray*}
\sum\limits_{n=0}^{\infty}T_{n+\delta-1}(x)t^n
=\DF{1}{1- r_0t-\DF{s_1t^2}{1-r_1t-\DF{s_2t^2}{1- r_2t-\ldots}}},
\end{eqnarray*}
where $r_i=(2\nu i+\beta)(1+x)$ and $s_i=2\nu
i[\nu(i-1)+\beta](1+x^2)$ for $i\geq0$.

For (iii), note the following criterion for the strong
$q$-log-convexity \cite{Zhu13}:

\noindent Let
\begin{eqnarray*}
\sum\limits_{n=0}^{\infty}F_{n}(q) t^n=\DF{1}{1-
r_0(q)t-\DF{s_1(q)t^2}{1- r_1(q)t-\DF{s_2(q)t^2}{1-
r_2(q)t-\ldots}}},
\end{eqnarray*}
where both $r_n(q)$ and $s_{n+1}(q)$ are polynomials with
nonnegative coefficients for $n \ge 0$. If all coefficients of
$r_i(q)r_{i+1}(q)-s_{i+1}(q)$ are nonnegative for all $i \ge 0$,
then $(F_n(q))_{n\ge0}$ is strongly $q$-log-convex. For $T_n(q)$, it
is obvious that
\begin{eqnarray*}
&&r_i(q)r_{i+1}(q)-s_{i+1}(q)\\
&= & (2\nu i+\beta)(2\nu i+2\nu+\beta)(1+q)^2-2\nu(i+1)(\nu i+\beta)(1+q^2)         \\
&=& [2\nu^2i^2+2\nu(\nu+\beta)i+\beta^2](1+q^2)+2(2\nu
i+\beta)[2\nu(i+1)+\beta]q
\end{eqnarray*}
has only nonnegative coefficients for $i, \beta\ge0$. Hence
$(T_n(q))_{n\ge0}$ is strongly $q$-log-convex for $\beta\ge 0$.

For (iv), by \eqref{Rel+h+S}, we have
$$
h_{n,k}=\sum\limits_{i\ge0}S_{n,i}\binom{i}{k},
$$
where $S_{n,i}$ satisfies the recurrence relation \eqref{Rec+S}. In
addition, by \eqref{Rec+S}, it is easy to prove that $S_{n,i}$ is
nonnegative for $\beta\ge0$ and $\nu>0$. In consequence, we obtain
the expansion of $T_n(x)$ in the gamma basis
$$
\left\{x^k(1+x)^{n-\delta+1-2k}|0\le
k\le\left\lfloor\frac{n-\delta+1}{2}\right\rfloor\right\}
$$
as follows:
\begin{eqnarray*}
    T_{n}(x)
&=& (1+x)^{n-\delta+1}h_{n-\delta+1}\left(-\frac{2x}{(1+x)^2}\right)       \\
&=& \sum\limits_{k\ge0}h_{n-\delta+1,k}(-2)^kx^k(1+x)^{n-\delta+1-2k}      \\
&=& \sum\limits_{k\ge0}(-2)^k\left(\sum\limits_{i\ge0}S_{n-\delta+1,i}
    \binom{i}{k}\right)x^k(1+x)^{n-\delta+1-2k}.
\end{eqnarray*}
Then, the result is desired. This completes the proof.
\end{proof}

\begin{re}
\em Flajolet \cite{Fla80} gave a general combinatorial
interpretation in terms of weighted Motzkin paths for a
Jacobi continued fraction expansion. From this, by
\eqref{JCF+S}, we can also obtain that $S_n(x)$ has only
nonnegative coefficients in $x$ for $\bm{\beta}\ge0$ and $\bm{\nu}>0$,
see \cite[Remark 5.6]{Zhu21} for instance .
\end{re}

It is known that $\gamma$-positivity is stronger than unimodality.
Though Theorem \ref{thm+symm+T} (iv) says that $T_n(x)$ is not
$\gamma$-positivity, it may still be unimodal.

\begin{thm}\label{thm+T+unimodal}
If $\bm{\beta}=1$ and $\bm{\nu}=-\bm{\mu}=1/{\delta}$, then $T_{n}(x)$ be
defined by \eqref{Rec+sym}  with $m_n=n-\delta+1$ is unimodal for any
$n \ge \delta+2$.
\end{thm}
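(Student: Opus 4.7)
The plan is to induct on $n\ge\delta+2$.

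For the base case $n=\delta+2$, I will iterate \eqref{Rec+sym} three times from the canonical initial value $T_{\delta-1}(x)=1$ (which is consistent with $m_n=n-\delta+1$) and obtain
\[
 T_{\delta+2}(x)=\frac{(\delta^{2}+6\delta+4)(1+x^{3})+(3\delta^{2}+6\delta+4)(x+x^{2})}{\delta^{2}},
\]
which is visibly unimodal because $3\delta^{2}+6\delta+4\ge\delta^{2}+6\delta+4$.

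For the inductive step I first rewrite \eqref{Rec+sym} in the shape
\[
 T_{n+1}(x)=\frac{1+x^{2}}{\delta}\,U_{n}(x)+(1+x)\,T_{n}(x),\qquad U_{n}(x):=m_{n}T_{n}(x)+(1-x)T_{n}'(x),
\]
using the factorization $\mu x^{3}+\nu x^{2}-\nu x-\mu=(1-x)(1+x^{2})/\delta$ valid for $\mu=-1/\delta$, $\nu=1/\delta$. The backbone is a \textbf{key lemma}: \emph{if $f$ is a symmetric polynomial of degree $d$ with nonnegative, unimodal coefficients, then $U:=df+(1-x)f'$ is symmetric of degree $d-1$ with nonnegative, unimodal coefficients.} Here $[x^{i}]U=(d-i)f_{i}+(i+1)f_{i+1}\ge 0$; symmetry follows from $f_{i}=f_{d-i}$; and the Abel-type identity
\[
 [x^{i+1}]U-[x^{i}]U=(d-i)(f_{i+1}-f_{i})+(i+2)(f_{i+2}-f_{i+1})
\]
exhibits each $u$-increment as a positive combination of two consecutive increments of $f$, which are both nonnegative for $i$ in the strictly increasing range of $f$ and combine harmlessly near the peak by the symmetry of $f$.

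Granting the key lemma together with the induction hypothesis that $T_{n}$ is symmetric, unimodal, and nonnegative, $U_{n}$ enjoys the same properties. Writing $f_{i},u_{i},g_{j}$ for the coefficients of $T_{n},U_{n},T_{n+1}$, a short calculation produces
\[
 \delta(g_{j+1}-g_{j})=(u_{j+1}-u_{j})+(u_{j-1}-u_{j-2})+\delta(f_{j+1}-f_{j-1}),
\]
and reusing the Abel identity for each $u$-increment realizes every summand on the right as a positive combination of $f$-increments. For $j$ strictly below the midpoint of $T_{n+1}$ all these $f$-increments are nonnegative (unimodality of $T_{n}$), so the total is nonnegative. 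At the boundary index adjacent to the midpoint, the symmetry of $T_{n}$ forces certain $f$-increments to pair as mutual negatives; the surviving coefficient equals $\delta\in\mathbb{N}^{+}$ when $m_{n}$ is even (hence positive), while for $m_{n}$ odd an additional residue appears.

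The main obstacle is the boundary case $j=(m_{n}-1)/2$ with $m_{n}$ odd and $\delta=1$, where after the pairwise cancellations one is left to verify the auxiliary inequality
\[
 \tfrac{m_{n}+5}{2}\bigl(f_{(m_{n}-3)/2}-f_{(m_{n}-5)/2}\bigr)\ \ge\ f_{(m_{n}-1)/2}-f_{(m_{n}-3)/2}.
\]
I plan to deduce this from the explicit semi-$\gamma$ decomposition $T_{n}(x)=(1+x)\sum_{k}g_{k}\,x^{k}(1+x^{2})^{(m_{n}-1)/2-k}$ with $g_{k}\ge 0$ provided by Theorem~\ref{thm+sym+HS} (since $m_{n}$ is odd and $T_{n}$ is Hurwitz stable symmetric with positive leading coefficient): factoring out $(1+x)$ converts the claim into a statement about consecutive differences of coefficients of $P(x):=\sum_{k}g_{k}\,x^{k}(1+x^{2})^{(m_{n}-1)/2-k}$; each summand $g_{k}\,x^{k}(1+x^{2})^{(m_{n}-1)/2-k}$ has an explicit binomial coefficient sequence for which the analogous inequality is an elementary identity between binomial coefficients, and the full inequality then follows by summing with nonnegative weights $g_{k}$.
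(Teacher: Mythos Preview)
Your reorganization through $T_{n+1}=\delta^{-1}(1+x^{2})\,U_{n}+(1+x)T_{n}$ with $U_{n}:=m_{n}T_{n}+(1-x)T_{n}'$ is a cleaner packaging than the paper's raw coefficient recurrence, and your key lemma about $U_{n}$ is correct. You also land on the right bottleneck: writing $m_{n}=2m+1$, the boundary difference at $j=m$ works out to $(\delta-2)(f_{m}-f_{m-1})+(m+3)(f_{m-1}-f_{m-2})$. (The coefficient is $\delta-2$, not $\delta$; this is exactly why $\delta\ge 2$ is immediate while $\delta=1$ needs extra work. Your ``$m_{n}$ even vs.\ odd'' bookkeeping is slightly garbled, but the substance --- that only $m_{n}$ odd with $\delta=1$ is delicate, and that the remaining task is the auxiliary inequality $(m+3)(f_{m-1}-f_{m-2})\ge f_{m}-f_{m-1}$ --- is right.)

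The genuine gap is your proof plan for that auxiliary inequality. The claim that it holds \emph{term by term} in the semi-$\gamma$ decomposition is false. For the top summand $k=m$ one has $x^{k}(1+x^{2})^{m-k}=x^{m}$, so its contribution to $T_{n}$ is $g_{m}(1+x)x^{m}=g_{m}(x^{m}+x^{m+1})$; for this piece $f_{m}-f_{m-1}=g_{m}$ while $f_{m-1}-f_{m-2}=0$, and the inequality reads $0\ge g_{m}$, which fails whenever $g_{m}>0$ (and it is: for $\delta=1$, $n=3$ one has $T_{3}=11+13x+13x^{2}+11x^{3}=(1+x)\bigl[11(1+x^{2})+2x\bigr]$, so $g_{1}=2$). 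Hence the inequality is a statement about how the $g_{k}$ interact, and nonnegativity of the weights alone cannot carry it. The paper handles this lone case ($\delta=1$, $m_{n}$ odd) by a different device: it iterates the coefficient recurrence once more, expressing the boundary difference in terms of the $T_{n-1}$-coefficients, after which it telescopes into four nonnegative $T_{n-1}$-increments (using $\ell\ge 3$ so that $\ell^{2}-\ell-6\ge 0$). You can graft that second-iteration step onto your argument; if you want to stay with the semi-$\gamma$ route you would need something stronger than positivity of the $g_{k}$ --- some quantitative relation among them, presumably extracted from the Hurwitz stability of $g(x)=\sum_{k}g_{k}x^{k}$ --- to control the top term.
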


\begin{proof}

We will prove it by induction on $n$. Whenever $n=\delta+2$, we have
$$
   \delta T_{\delta+2}(x)
 = (4+6\delta+\delta^2)(1+x^3)+(4+6\delta+3\delta^2)(x+x^2),
$$
which is unimodal. Assume that $T_{i}(x)$ is unimodal for
$i=n>\delta+2\ge3$. By induction hypothesis, whenever $i=n+1$, we
need to verify $\delta (T_{n+1, k}- T_{n+1, k-1})\ge0$ for $1 \le k
\le \lfloor (n-\delta+2)/2 \rfloor$.

By \eqref{Rec+sym}, the coefficients of $T_{n+1}(x)$ satisfy the
recurrence relation
\begin{eqnarray*}
     \delta T_{n+1,k}
 &=& (k+1)T_{n,k+1}+(n-k+1)T_{n,k}+(k+\delta-1)T_{n,k-1}
     +(n-\delta-k+3)T_{n,k-2}.
\end{eqnarray*}
It helps us to get
\begin{eqnarray}\label{Rec+P+number}
     \delta(T_{n+1,k}-T_{n+1,k-1})
 &=& (k+1)T_{n,k+1}+(\delta-2)T_{n,k}+(n-\delta-2k+3)(T_{n,k}-T_{n,k-1}) \nonumber  \\
 & & +(n-2\delta-2k+5)T_{n,k-2}-(n-\delta-k+4)T_{n,k-3}.
\end{eqnarray}

For $1 \le k < \lfloor (n-\delta+2)/2 \rfloor$, $T_{n, i}$ is
increasing as $i$ from $0$ to $\lfloor(n-\delta+1)/2\rfloor$ by
assumption. Note that the sum of the coefficients in right hand side
of \eqref{Rec+P+number} is $0$, which implies $$\delta (T_{n+1, k}-
T_{n+1, k-1})\geq0.$$



For $k =\lfloor(n-\delta+2)/2\rfloor$, we will consider two cases in
terms of parity of $n-\delta+2$.

Case $1$: $n-\delta+2=2\ell+1$ and $k=\ell$. Then
$T_{n,\ell+1}=T_{n,\ell-1}$ and
\begin{eqnarray*}
       \delta(T_{n+1,\ell}-T_{n+1,\ell-1})
 &=&   (\ell+1)T_{n,\ell+1}+\delta T_{n,\ell}-2T_{n,\ell-1}
       -(\delta-4)T_{n,\ell-2}-(\ell+3)T_{n,\ell-3}       \\
 &=&   \delta T_{n,\ell}+(\ell-1)T_{n,\ell-1}
       -(\delta-4)T_{n,\ell-2}-(\ell+3)T_{n,\ell-3}       \\
 &\ge& \delta T_{n,\ell-2}+(\ell-1)T_{n,\ell-2}
       -(\delta-4)T_{n,\ell-2}-(\ell+3)T_{n,\ell-2}       \\
 &=&  0
\end{eqnarray*}
because $T_{n, i}$ is increasing as $i$ from $0$ to $\ell$.

Case $2$:  $n-\delta+2=2\ell$ and $k=\ell$. Then $\ell\geq3$, $T_{n,
\ell+1}=T_{n, \ell-2}$ and $T_{n, \ell}=T_{n, \ell-1}$. Thus we have
\begin{eqnarray}\label{eq+dif+ell}
      \delta(T_{n+1,\ell}-T_{n+1,\ell-1})
  &=& (\ell+1)T_{n,\ell+1}+(\delta-1)T_{n,\ell}-T_{n,\ell-1}
      -(\delta-3)T_{n,\ell-2}-(\ell+2)T_{n,\ell-3}      \nonumber  \\
  &=& (\delta-2)T_{n,\ell-1}+(\ell-\delta+4)T_{n,\ell-2}
      -(\ell+2)T_{n,\ell-3}.
\end{eqnarray}

If $\delta\ge2$, then
\begin{eqnarray*}
        \delta(T_{n+1,\ell}-T_{n+1 \ell-1})
  &=&   (\delta-2)T_{n,\ell-1}+(\ell-\delta+4)T_{n,\ell-2}
         -(\ell+2)T_{n,\ell-3}                                     \\
  &\ge& (\delta-2)T_{n,\ell-2}+(\ell-\delta+4)T_{n,\ell-2}
        -(\ell+2)T_{n,\ell-3}                                      \\
  &\ge& (\ell+2)T_{n,\ell-2}-(\ell+2)T_{n,\ell-3}                  \\
  &\ge& 0
\end{eqnarray*}
because $T_{n, i}$ is increasing as $i$ from $0$ to $\ell$.

If $\delta=1$, then (\ref{eq+dif+ell}) becomes to
\begin{eqnarray*}
  & &  T_{n+1,\ell}-T_{n+1,\ell-1}                                 \\
  &=&  (\ell^2+\ell-3)T_{n-1,\ell-1}+(2\ell+8)T_{n-1,\ell-2}
       -(4\ell+11)T_{n-1,\ell-3}                                   \\
  & &  +(6\ell+12)T_{n-1,\ell-4}-(\ell^2+5\ell+6)T_{n-1,\ell-5}    \\
  &=&  (\ell^2-\ell-6)(T_{n-1,\ell-1}-T_{n-1,\ell-5})
       +(2\ell+3)(T_{n-1,\ell-1}-T_{n-1,\ell-3})                   \\
  & &  +(2\ell+8)(T_{n-1,\ell-2}-T_{n-1,\ell-3})
       +(6\ell+12)(T_{n-1,\ell-4}-T_{n-1,\ell-5})                  \\
  &\ge&0
\end{eqnarray*}
for $\ell\ge3$. This completes all proof.
\end{proof}

\subsection{A relation with the derivative polynomials}

The polynomial $T_n(x)$ has a close relation with the derivative
polynomials. Knuth and Buckholtz \cite{KB67} introduced the
derivative polynomials to compute the tangent and secant numbers,
where the derivative polynomial for secant defined by
$$
D^{n}_{\theta}\sec\theta=\sec\theta \cdot Q_n(\tan\theta).
$$
Based on this, Hoffman \cite{Hof99} studied the exponential generating
functions and the combinatorial interpretation of the coefficients for those
polynomials. In addition, he also studied the Springer and Shanks numbers in
terms of the Eulerian polynomials. Josuat-Verg\`es \cite{JV14} defined the
generalized derivative polynomials for secant as follows:
$$
D^{n}_{\theta}\sec^{\delta}\theta=\sec^{\delta}\theta \cdot Q^{(\delta)}_n(\tan\theta),
$$
where $Q^{(\delta)}_n(x)$ satisfies the following recurrence relation:
\begin{equation}\label{Rec+sec}
Q^{(\delta)}_{n+1}(x)=\delta xQ^{(\delta)}_n(x)+(1+x^2)D_xQ^{(\delta)}_n(x)
\end{equation}
with the initial condition $Q^{(\delta)}_0(x)=1$. For the
generalized derivative polynomials, Josuat-Verg\`es studied the
ordinary (resp., exponential) generating functions in terms of the
Jacobi continued fraction expansion (resp., trigonometric functions).
We refer the reader to \cite{Hof95,Hof99,JV14} and references therein
for more details.


%


Combining \eqref{Rec+g}, \eqref{Rec+sec}, \eqref{Rel+T+Q} and (ii)
of Theorem \ref{thm+symm+T} gives the following result. It not only
gives a relation between $T_n(x)$ and $Q^{(\delta)}_{n}(x)$, but
also implies some properties of $Q^{(\delta)}_{n}(x)$.

\begin{prop}\label{prop+Q}
Let $Q^{(\delta)}_{n}(x)$ be defined by \eqref{Rec+sec}. If
$\bm{\beta}=1$ and $\bm{\nu}=-\bm{\mu}=1/{\delta}$, then
\begin{itemize}
  \item [\rm (i)]
  it has the relation with the derivative polynomial
    \begin{equation}\label{Rel+T+Q}
      Q^{(\delta)}_{n}(x)=\frac{\delta^n(1+x)^n}{2^n}T_{n+\delta-1}\left(\frac{x-1}{x+1}\right);
    \end{equation}
  \item [\rm (ii)]
   its exponential generating function is
  $$
     \sum\limits_{n\ge0}Q^{(\delta)}_{n}(x)\frac{t^n}{n!}
   = \frac{1}{(\cos t-x\sin t)^{\delta}};
  $$
  \item [\rm (iii)]
   its ordinary generating function has the Jacobi continued fraction
   expansion
    \begin{eqnarray*}
     \sum\limits_{n=0}^{\infty}Q^{(\delta)}_{n}(x) t^n=\DF{1}{1- r_0t-
     \DF{s_1t^2}{1-r_1t-\DF{s_2t^2}{1- r_2t-\ldots}}},
    \end{eqnarray*}
   where $r_i=(2i+\delta)x$ and $s_i=i(i+\delta-1)(1+x^2)$
   for $i\geq0$.
  \end{itemize}
\end{prop}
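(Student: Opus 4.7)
The plan is to derive all three parts of Proposition \ref{prop+Q} from what has already been established in the proof of Theorem \ref{thm+symm+T}, specifically the auxiliary polynomial $g_n(x)$ introduced in \eqref{def+g+T}, its recurrence \eqref{Rec+g}, its exponential generating function \eqref{egf+g}, and the Jacobi continued fraction expansion in Theorem \ref{thm+symm+T}(ii). The observation is that under the specialization $\bm{\beta}=1$ and $\bm{\nu}=-\bm{\mu}=1/\delta$, the polynomial $g_n(x)$ coincides exactly with $Q^{(\delta)}_n(x)$, so (i) is essentially a reinterpretation of \eqref{def+g+T} and (ii)-(iii) follow by transport of structure.

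For (i), I would substitute $\bm{\beta}=1, \bm{\nu}=1/\delta$ into the recurrence \eqref{Rec+g} to obtain
\[
  g_{n+1}(x) = \delta x\, g_n(x) + (1+x^2)\, D_x g_n(x),
\]
which is precisely \eqref{Rec+sec}. Since the initial condition $g_0(x)=T_{\delta-1}(x)=1$ (as $m_{\delta-1}=0$) matches $Q^{(\delta)}_0(x)=1$, a straightforward induction gives $Q^{(\delta)}_n(x)=g_n(x)$, and the definition \eqref{def+g+T} of $g_n(x)$ is exactly the claimed identity \eqref{Rel+T+Q}.

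For (ii), I would invoke \eqref{egf+g} established inside the proof of Theorem \ref{thm+symm+T}, which, under $\bm{\beta}/\bm{\nu}=\delta$ and $\bm{\nu}\delta = 1$, reads
\[
  \sum_{n\ge 0} g_n(x)\,\frac{t^n}{n!} = \frac{1}{(\cos t - x\sin t)^{\delta}}.
\]
Combined with (i), this yields the stated EGF. For (iii), the strategy is to apply Theorem \ref{thm+symm+T}(ii) with $\bm{\beta}=1, \bm{\nu}=1/\delta$, and then transform the identity $Q^{(\delta)}_n(x)=\frac{\delta^n(1+x)^n}{2^n}T_{n+\delta-1}\!\left(\frac{x-1}{x+1}\right)$ at the level of the generating function. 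Concretely, multiplying by $t^n$ and summing gives
\[
  \sum_{n\ge 0} Q^{(\delta)}_n(x)\, t^n = \sum_{n\ge 0} T_{n+\delta-1}(u)\, s^n, \quad \text{where } u=\frac{x-1}{x+1},\ s=\frac{\delta(1+x)}{2}t,
\]
and one plugs these into the Jacobi expansion of Theorem \ref{thm+symm+T}(ii) with coefficients $r_i=(2\nu i+\bm{\beta})(1+u)$ and $s_i=2\nu i[\bm{\beta}+\nu(i-1)](1+u^2)$.

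The only non-trivial step is verifying that the substitution actually produces the advertised continued fraction coefficients $r_i=(2i+\delta)x$ and $s_i=i(i+\delta-1)(1+x^2)$. This is a short computation using $1+u=\frac{2x}{x+1}$ and $1+u^2=\frac{2(1+x^2)}{(1+x)^2}$: the factor $s$ converts $r_i$ into $(2i+\delta)x$, and the factor $s^2$ combined with the coefficient $2\nu i[\bm{\beta}+\nu(i-1)]=\frac{2i(i+\delta-1)}{\delta^2}$ exactly cancels $(1+x)^2$ in the denominator, leaving $i(i+\delta-1)(1+x^2)$. I do not anticipate any real obstacle here; the main care is keeping track of the normalization in the change of variables, since this is where a misplaced factor of $\delta$ or $2$ would propagate and spoil the coefficients.
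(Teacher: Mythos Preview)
Your proposal is correct and follows essentially the same route as the paper. The paper's proof is a one-line remark that the proposition follows by ``combining \eqref{Rec+g}, \eqref{Rec+sec}, \eqref{Rel+T+Q} and (ii) of Theorem \ref{thm+symm+T}'', and you have simply spelled out those combinations: identifying $g_n=Q^{(\delta)}_n$ by matching the recurrences under $\bm{\beta}=1,\ \bm{\nu}=1/\delta$, reading off the exponential generating function from \eqref{egf+g}, and carrying the substitution $u=\tfrac{x-1}{x+1}$, $s=\tfrac{\delta(1+x)}{2}t$ through the Jacobi continued fraction of Theorem~\ref{thm+symm+T}(ii); your check of the resulting coefficients $r_i$ and $s_i$ is accurate.
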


\begin{rem}
The (ii) and (iii) in Proposition \ref{prop+Q} were also proved by
Josuat-Verg\`esit \cite{JV14} using the different method.
\end{rem}

In addition, we also give a convolutional relation among the
polynomial $T_{n}(x)$ in \eqref{Rec+sym} for different $\delta$.
For convenience, denote $T^{(\delta)}_{n}(x)=T_n(x)$ for $m_n=n-\delta+1$.
Then, we have the following result.
\begin{prop}\label{prop+P+inv}
If $\bm{\beta}=1$ and $\bm{\nu}=-\bm{\mu}=1/{\delta}$, then we have
\begin{equation*}\label{inv+P}
    (\delta_1+\delta_2)^nT^{(\delta_1+\delta_2)}_{n+\delta_1+\delta_2-1}(x)
  =  \sum\limits_{k\ge0}\binom{n}{k}\delta_1^{k}\delta_2^{n-k}
     T^{(\delta_1)}_{k+\delta_1-1}(x)T^{(\delta_2)}_{n-k+\delta_2-1}(x)
\end{equation*}
for $\delta_1,\delta_2 \in \mathbb{N}$.
\end{prop}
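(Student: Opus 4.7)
The plan is to prove this convolutional identity by showing that the suitably rescaled exponential generating function of $T^{(\delta)}_{n+\delta-1}(x)$ is simply the $\delta$-th power of a $\delta$-free base function, so that $\delta_1+\delta_2$ in the exponent splits multiplicatively.

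First I would invoke Theorem~\ref{thm+symm+T}(i), specialized with $\bm{\beta}=1$ and $\bm{\nu}=-\bm{\mu}=1/\delta$ (so that $\bm{\beta}/\bm{\nu}=\delta$ and $\bm{\nu}(1-x)=(1-x)/\delta$), to obtain
$$
\sum_{n\ge 0} T^{(\delta)}_{n+\delta-1}(x)\,\frac{t^n}{n!}
= \frac{(1-x)^{\delta}}{\bigl[(1-x)\cos((1-x)t/\delta)-(1+x)\sin((1-x)t/\delta)\bigr]^{\delta}}.
$$
Next, I would perform the rescaling $t \mapsto \delta u$ in order to remove $\delta$ from the arguments of the trigonometric functions. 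This gives
$$
\sum_{n\ge 0} \delta^{n} T^{(\delta)}_{n+\delta-1}(x)\,\frac{u^n}{n!}
= \Phi(x,u)^{\delta},
\qquad
\Phi(x,u):=\frac{1-x}{(1-x)\cos((1-x)u)-(1+x)\sin((1-x)u)},
$$
where crucially $\Phi(x,u)$ is independent of $\delta$.

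The second step is then immediate: because $\Phi(x,u)^{\delta_1+\delta_2}=\Phi(x,u)^{\delta_1}\cdot\Phi(x,u)^{\delta_2}$, the generating function identity gives
$$
\sum_{n\ge 0}(\delta_1+\delta_2)^{n}T^{(\delta_1+\delta_2)}_{n+\delta_1+\delta_2-1}(x)\frac{u^n}{n!}
=\Bigl(\sum_{k\ge 0}\delta_1^{k}T^{(\delta_1)}_{k+\delta_1-1}(x)\frac{u^k}{k!}\Bigr)\Bigl(\sum_{j\ge 0}\delta_2^{j}T^{(\delta_2)}_{j+\delta_2-1}(x)\frac{u^j}{j!}\Bigr).
$$
Finally, extracting the coefficient of $u^n/n!$ via the Cauchy product and multiplying out yields exactly the claimed identity.

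There is no real obstacle: the only thing to verify carefully is the bookkeeping on the exponent $\bm{\beta}/\bm{\nu}=\delta$ in Theorem~\ref{thm+symm+T}(i) and the role of the substitution $t\mapsto\delta u$, which is what produces the factors $\delta_1^{k}\delta_2^{n-k}$ and $(\delta_1+\delta_2)^{n}$ in the convolution. Alternatively, one may avoid the rescaling altogether by working with Proposition~\ref{prop+Q}(ii): the EGF $\sum_n Q^{(\delta)}_n(x)t^n/n!=(\cos t-x\sin t)^{-\delta}$ immediately gives $Q^{(\delta_1+\delta_2)}_n(x)=\sum_k\binom{n}{k}Q^{(\delta_1)}_k(x)Q^{(\delta_2)}_{n-k}(x)$, and translating through the relation \eqref{Rel+T+Q}, the common factors $(1+x)^n$ and $2^{-n}$ cancel between the two sides, leaving precisely the stated formula.
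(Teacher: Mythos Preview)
Your proof is correct and follows essentially the same route as the paper: the paper invokes Proposition~\ref{prop+Q}(ii) to obtain the convolution $Q^{(\delta_1+\delta_2)}_n=\sum_k\binom{n}{k}Q^{(\delta_1)}_kQ^{(\delta_2)}_{n-k}$ from the multiplicativity of the EGF $(\cos t-x\sin t)^{-\delta}$, and then transfers this to $T^{(\delta)}$ via \eqref{Rel+T+Q}---which is exactly the alternative you describe at the end. Your primary argument, using Theorem~\ref{thm+symm+T}(i) directly together with the rescaling $t\mapsto\delta u$, is simply the same multiplicativity observed one step earlier (before passing to $Q$), so there is no substantive difference.
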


\begin{proof}

By (ii) of Proposition \ref{prop+Q}, we have the following the relation
\begin{eqnarray}\label{inv+Q}
    Q^{(\delta_1+\delta_2)}_n(x)
  = \sum\limits_{k\ge0}\binom{n}{k}
    Q^{(\delta_1)}_{k}(x)Q^{(\delta_2)}_{n-k}(x).
\end{eqnarray}
Combining \eqref{Rel+T+Q} and \eqref{inv+Q} derives the desired result.

\end{proof}

\begin{re}
\em
In \cite{DZ21}, we also obtain some similar results for
$q$-analog of Theorem \ref{thm+symm+T}, Theorem \ref{thm+T+unimodal}
and Proposition \ref{prop+Q}.
\end{re}



\subsection{Alternating descents of permutations}

The number of alternating descents of a permutation $\pi \in S_n$ is
defined by
$$
  altdes_{A} (\pi)
= |\{2i : \pi(2i) < \pi(2i + 1)\} \cup \{2i + 1 : \pi(2i + 1) > \pi(2i + 2)\}|.
$$
Define the \emph{alternating Eulerian polynomial} $\widehat{A}_n(x)$
as follows:
$$
  \widehat{A}_n(x) = \sum\limits_{\pi \in S_n} x^{altdes_{A}(\pi)}
                   = \sum\limits_{k=0}^{n}\widehat{A}(n,k)x^k,
$$
where $\widehat{A}(n,k)$ is called the {\it alternating Eulerian number}.

In recent years, several authors paid attention to the polynomial
$\widehat{A}_n(x)$. For example, Chebikin \cite{Che08} studied the
exponential generating function. Remmel \cite{Re12} computed a
generating function for the joint distribution of the alternating
descent statistic and the alternating major statistic over $S_n$.
Moreover, Gessel and Zhuang \cite{GZ14} extended some results in
\cite{Che08, Re12} by using noncommutative symmetric functions.  For
$n\ge1$, Ma and Yeh \cite{MY16} gave the explicit formula and the
recurrence relation
\begin{eqnarray*}\label{rec+alt+des}
       2\widehat{A}_{n+1}(x)
 & = & [(n-1)x^2+2x+n+1]\widehat{A}_n(x) + (1-x)(1+x^2)D_x\widehat{A}_n(x)
\end{eqnarray*}
with initial conditions $\widehat{A}_1(x)=1$ and
$\widehat{A}_2(x)=1+x$. We sum up some other known properties for
$\widehat{A}_n(x)$ in the following result, which is immediate from
Theorems \ref{thm+sym+HS}, \ref{thm+symm+T} and \ref{thm+T+unimodal}
by taking $\delta=2$.

\begin{thm}\label{thm+alt+des+A}
Let $\widehat{A}_n(x)$ be the alternating Eulerian polynomial of
type $A$. Then
\begin{itemize}
 \item [\rm (i)]
   it has the relation with derivative polynomials
   \begin{eqnarray*}
    \widehat{A}_{n+1}(x) = \frac{(1-x)^n}{2^n} Q^{(2)}_n(\frac{1+x}{1-x});
   \end{eqnarray*}
 \item [\rm (ii)]
   it is symmetric and unimodal for any $n \in \mathbb{N}$;
 \item [\rm (iii)]
   its exponential generating function is
    \begin{equation}\label{egf+A}
        \sum\limits_{n\ge0}\widehat{A}_{n+1}(x)\frac{t^n}{n!}
      = \frac{(1-x)^2}{[(1-x)\cos((1-x)t/2)-(1+x)\sin((1-x)t/2)]^2};
    \end{equation}
 \item [\rm (iv)]
   its ordinary generating function has the Jacobi continued fraction
   expansion
   \begin{eqnarray*}
    \sum\limits_{n=0}^{\infty}\widehat{A}_{n+1}(x) t^n=\DF{1}{1- r_0t-
    \DF{s_1t^2}{1-r_1t-\DF{s_2t^2}{1- r_2t-\ldots}}},
   \end{eqnarray*}
   where $r_i=(i+1)(1+x)$ and $s_i=i(i+1)(1+x^2)/2$ for $i\geq0$;
 \item [\rm (v)]
   it is strongly $q$-log-convex;
 \item [\rm (vi)]
   it is Hurwitz stable and semi-$\gamma$-positive for $n\geq1$;
 \item [\rm (vii)]
  it has the following decomposition
    \begin{eqnarray*}
    \widehat{A}_{n}(x)=\sum_{k\ge0}(-2)^k\left[\sum_{i\ge0}S_{n-1,i}\binom{i}{k}\right]x^k(1+x)^{n-1-2k},
\end{eqnarray*}
   see $S_{n,i}$ in \cite[A094503, A113897]{Slo}.
   Moreover, it is not $\gamma$ positive for $n\geq3$.
\end{itemize}
\end{thm}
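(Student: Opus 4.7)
The plan is to extract $\widehat{A}_n(x)$ as the specialization of the symmetric recurrence \eqref{Rec+sym} with $\delta=2$, $\beta=1$, $\mu=-1/2$, $\nu=1/2$, and then read off each conclusion from the general results of this section. First I would divide the displayed recurrence for $2\widehat{A}_{n+1}(x)$ by $2$ and observe that $\deg\widehat{A}_n(x)=n-1=n-\delta+1$, so $m_n=n-1$; the coefficients of $\widehat{A}_n(x)$ and of $D_x\widehat{A}_n(x)$ on the right hand side then match those in \eqref{Rec+sym} exactly. This identification gives $\nu=-\mu=1/\delta$, $\mu+\nu=0$, and $2\beta+m_n(\mu+\nu)=2\geq 0$, so with initial datum $\widehat{A}_1(x)=1$ (which is Hurwitz stable and symmetric) the hypotheses of Theorems \ref{thm+sym+HS}, \ref{thm+symm+T}, \ref{thm+T+unimodal} and Proposition \ref{prop+Q} are all met.

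Once this dictionary is fixed, items (i), (iii), (iv), (v), and most of (vii) are substitutions. For (i), plugging $\delta=2$ into \eqref{Rel+T+Q} and inverting the M\"obius substitution $y=(x-1)/(x+1)$ produces $\widehat{A}_{n+1}(x)=(1-x)^n Q^{(2)}_n((1+x)/(1-x))/2^n$. For (iii) and (iv), substituting $\delta=2$, $\beta=1$, $\nu=1/2$ into Theorem \ref{thm+symm+T}(i) and (ii) simplifies $\beta/\nu$ to $2$ and the trigonometric argument $\nu(1-x)t$ to $(1-x)t/2$, yielding the exponential generating function in \eqref{egf+A} and the continued fraction data $r_i=(i+1)(1+x)$, $s_i=i(i+1)(1+x^2)/2$. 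For (v), strong $q$-log-convexity is Theorem \ref{thm+symm+T}(iii) with $\beta=1\geq 0$. For (vii), the explicit decomposition is the specialization at $\delta=2$ of the expansion established in the proof of Theorem \ref{thm+symm+T}(iv); non-$\gamma$-positivity for $n\geq 4$ is that theorem, and the boundary case $n=3$ is a direct check since $\widehat{A}_3(x)=2(1+x+x^2)$ forces the $\gamma$-coefficient of $x$ in the $(1+x)^2$-basis to be $-2$.

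For (vi), Theorem \ref{thm+sym+HS} applies immediately and yields both Hurwitz stability and semi-$\gamma$-positivity of $\widehat{A}_n(x)$ for every $n\geq 1$. In (ii), symmetry is built into the form \eqref{Rec+sym} and propagates by induction from $\widehat{A}_1(x)=1$; unimodality for $n\geq \delta+2=4$ is Theorem \ref{thm+T+unimodal}, and the cases $n\in\{1,2,3\}$ are manifest from the explicit polynomials $1$, $1+x$, and $2+2x+2x^2$.

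The only genuine obstacle is bookkeeping of the index shift. In the general framework $T^{(\delta)}_{n+\delta-1}$ with $\delta=2$ corresponds to $\widehat{A}_{n+1}$ rather than $\widehat{A}_n$, so the shifts must be threaded carefully through Proposition \ref{prop+Q} and through parts (i) and (ii) of Theorem \ref{thm+symm+T}. Apart from this and the small-$n$ verifications above, no new argument is needed.
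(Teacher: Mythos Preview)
Your proposal is correct and follows the same route as the paper, which simply states that the theorem is immediate from Theorems \ref{thm+sym+HS}, \ref{thm+symm+T}, \ref{thm+T+unimodal} upon taking $\delta=2$. You go further by spelling out the matching of the recurrence with \eqref{Rec+sym}, tracking the index shift $T^{(2)}_{n+1}\leftrightarrow\widehat{A}_{n+1}$, and handling the boundary cases $n\le 3$ for (ii) and (vii) that fall outside the range $n\ge\delta+2$ of the general statements; the paper omits these checks.
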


\begin{re}
\em We refer the reader to \cite{LMWW20,MY16,Zhu18sdm} for the
corresponding different proof for Theorem \ref{thm+alt+des+A}.
Integrating with respect to \eqref{egf+A} in $t$, we recover the
exponential generating function of $\widehat{A}_{n}(x)$ occurred in
\cite[Theorem 4.2]{Che08} as follows:
\begin{equation*}
   \sum\limits_{n\ge0}\widehat{A}_{n}(x)\frac{t^n}{n!}
 = \frac{\sec(1-x)t+\tan(1-x)t-1}{1-x(\sec(1-x)t+\tan(1-x)t)},
\end{equation*}
since the left part of \eqref{egf+A} is equivalent to 1 whenever $t=0$.
\end{re}

\subsection{Alternating descents of signed permutations}

Similarly, the number of alternating descents of a permutation
$\pi \in B_n$ is defined by
$$
altdes_B(\pi)=|\{2i:\pi(2i)<\pi(2i+1)\}\cup\{2i+1:\pi(2i+1)>\pi(2i+2)\}|,
$$
where $i \ge 0$ and $\pi (0) = 0$. We call $\pi(2i)<\pi(2i+1)$
(resp., $\pi(2i)>\pi(2i+1)$) be the even alternating descent
(resp., ascent) space and $\pi(2i+1)>\pi(2i+2)$ (resp.,
$\pi(2i+1)<\pi(2i+2)$) be the odd alternating descent
(resp., ascent) space. Define the {\it alternating Eulerian polynomial}
of type $B$ be
$$
\widehat{B}_n(x) = \sum\limits_{\pi \in B_n} x^{altdes_B(\pi)}
                 = \sum\limits_{k=0}^{n}\widehat{B}(n,k)x^k,
$$
where $\widehat{B}(n,k)$ is called the {\it alternating Eulerian
number} of type $B$.

We list the first few terms as follows:
$$
\begin{array}{lc}
  \widehat{B}_0(x) = 1,                          \\
  \widehat{B}_1(x) = 1+x,                        \\
  \widehat{B}_2(x) = 3+2x+3x^2,                  \\
  \widehat{B}_3(x) = 11+13x+13x^2+11x^3,         \\
  \widehat{B}_4(x) = 57+76x+118x^2+76x^3+57x^4.  \\
\end{array}
$$
Using the similar combinatorial interpretation of alternating
descent numbers of type $A$ \cite{LMWW20}, we have the next
recurrence relation
\begin{eqnarray*}
 \widehat{B}_{n+1, k}
        = (n-k+2)\widehat{B}_{n,k-2}+k\widehat{B}_{n,k-1}
          +(n-k+1)\widehat{B}_{n,k}+(k+1)\widehat{B}_{n,k+1},
\end{eqnarray*}
which implies
\begin{eqnarray*}
\widehat{B}_{n+1}(x)
        = (nx^2+x+n+1)\widehat{B}_n(x)+(1-x)(1+x^2)D_x\widehat{B}_n(x).
\end{eqnarray*}
We refer the reader to \cite{MFMY21} for a different proof from the
context-free grammar.

In analog to $\widehat{A}_n(x)$, we sum up the other properties of
$\widehat{B}_n(x)$ as follows, which is immediate by Theorems
\ref{thm+sym+HS}, \ref{thm+symm+T} and \ref{thm+T+unimodal} with
$\delta=1$.
\begin{thm}\label{thm+alt+des+B}
Let $\widehat{B}_n(x)$ be the alternating Eulerian polynomial of
type $B$. Then
\begin{itemize}
  \item [\rm (i)]
   it has the relation with derivative polynomials
   \begin{eqnarray*}
    \widehat{B}_n(x)=(1-x)^nQ^{(1)}_n(\frac{1+x}{1-x});
   \end{eqnarray*}
  \item [\rm (ii)]
   it is symmetric and unimodal for any $n \ge 3$;
  \item [\rm (iii)]
   its exponential generating function is
    \begin{eqnarray*}
    \sum\limits_{n\ge0}\widehat{B}_n(x)\frac{t^n}{n!}
    = \frac{1-x}{(1-x)\cos(1-x)t-(1+x)\sin(1-x)t};
   \end{eqnarray*}
  \item [\rm (iv)]
   its ordinary generating function has the Jacobi continued fraction
   expansion
   \begin{eqnarray*}\label{JCF+alt+des+B}
    \sum\limits_{n=0}^{\infty}\widehat{B}_{n}(x) t^n=\DF{1}{1- r_0t-
    \DF{s_1t^2}{1-r_1t-\DF{s_2t^2}{1- r_2t-\ldots}}},
   \end{eqnarray*}
   where $r_i=(2i+1)(1+x)$ and $s_i=2i^2(1+x^2)$ for $i\geq0$;
  \item [\rm (v)]
   the polynomial sequence $(\widehat{B}_n(q))_{n\ge0}$ is strongly
   $q$-log-convex;
  \item [\rm (vi)]
   it is Hurwitz stable and semi-$\gamma$-positive for $n\geq1$;
  \item [\rm (vii)]
   it has the following decomposition
   \begin{eqnarray*}
     \widehat{B}_{n}(x)
    =\sum_{k\ge0}(-4)^k\left[\sum_{i\ge0}\widetilde{W}_{n,i}\binom{i}{k}\right]x^k(1+x)^{n-2k},
   \end{eqnarray*}
   where $\widetilde{W}_{n,i}$ is the left peaks in \eqref{left+peak}.
   Moreover, it is not $\gamma$ positive for $n\geq2$.
\end{itemize}
\end{thm}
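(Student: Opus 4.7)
The plan is to identify $\widehat{B}_n(x)$ with a specialization of the family $T_n(x)$ from \eqref{Rec+sym} and then invoke the previously established general results. First I would match parameters by reading off the recurrence for $\widehat{B}_{n+1}(x)$. The factor $(1-x)(1+x^2)=-x^3+x^2-x+1$ multiplying $D_x\widehat{B}_n$ forces $\bm{\mu}=-1$ and $\bm{\nu}=1$, while the polynomial $nx^2+x+n+1$ multiplying $\widehat{B}_n$ forces $m_n=n$ (so $\delta=1$) and $\bm{\beta}=1$, the identity $\bm{\beta}+m_n\bm{\nu}=n+1$ then falling out automatically. Thus $\widehat{B}_n(x)=T_n(x)$ in the notation of Theorem \ref{thm+symm+T} at $\delta=1$, $\bm{\beta}=1$ and $\bm{\nu}=-\bm{\mu}=1/\delta$, which are precisely the hypotheses needed by Theorems \ref{thm+sym+HS}, \ref{thm+symm+T} and \ref{thm+T+unimodal}.

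With this identification, parts (i), (iii), (iv) and (v) follow by direct substitution. For (i), I would take $\delta=1$ in \eqref{Rel+T+Q} and invert through $y=(1+x)/(1-x)$, noting $(y-1)/(y+1)=x$ and $(1+y)/2=1/(1-x)$. For (iii) and (iv) I would substitute $\delta=\bm{\beta}=\bm{\nu}=1$ into Theorem \ref{thm+symm+T}(i) and (ii); the continued-fraction data become $r_i=(2i+1)(1+x)$ and $s_i=2i^2(1+x^2)$ as required. Part (v) is Theorem \ref{thm+symm+T}(iii) since $\bm{\beta}=1\ge 0$. In (ii), symmetry holds by induction from $\widehat{B}_0=1$ through \eqref{Rec+sym}, and unimodality for $n\ge 3$ is Theorem \ref{thm+T+unimodal} with $\delta=1$. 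For (vi), I would verify the hypotheses of Theorem \ref{thm+sym+HS}: $\widehat{B}_0=1$ is Hurwitz stable, $\bm{\mu}+\bm{\nu}=0\le 0$, and $2\bm{\beta}+m_n(\bm{\mu}+\bm{\nu})=2>0$, so Hurwitz stability and semi-$\gamma$-positivity both follow at once.

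The one piece requiring a genuine intermediate identification is (vii). The proof of Theorem \ref{thm+symm+T}(iv), specialized to our parameters, produces the expansion
$$
\widehat{B}_n(x)=\sum_{k\ge 0}(-2)^k\Bigl(\sum_{i\ge 0}S_{n,i}\binom{i}{k}\Bigr)x^k(1+x)^{n-2k},
$$
where $S_{n,k}$ satisfies \eqref{Rec+S} at $\bm{\beta}=\bm{\nu}=1$, namely $S_{n,k}=(2k+1)S_{n-1,k}+2(n-2k+1)S_{n-1,k-1}$ with $S_{0,0}=1$. Setting $S_{n,k}=2^k\widetilde{W}_{n,k}$ collapses this to $\widetilde{W}_{n,k}=(2k+1)\widetilde{W}_{n-1,k}+(n-2k+1)\widetilde{W}_{n-1,k-1}$, which is the classical recurrence for the left-peak array from \eqref{left+peak}, and both sides agree at $(n,k)=(0,0)$. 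Hence $(-2)^kS_{n,k}=(-4)^k\widetilde{W}_{n,k}$ and the stated decomposition follows. Theorem \ref{thm+symm+T}(iv) gives non $\gamma$-positivity for $n\ge 3$, and the remaining case is immediate from $\widehat{B}_2(x)=3+2x+3x^2=3(1+x)^2-4x$, whose single gamma coefficient $-4$ is negative. The main obstacle in the whole argument is precisely the identification $S_{n,k}=2^k\widetilde{W}_{n,k}$: it is the only step that goes beyond pure parameter substitution, and it is what converts the generic semi-$\gamma$ decomposition into the concrete left-peak form appearing in (vii).
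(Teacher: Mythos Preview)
Your overall strategy coincides with the paper's: the paper derives Theorem~\ref{thm+alt+des+B} as the $\delta=1$ specialization of Theorems~\ref{thm+sym+HS}, \ref{thm+symm+T} and \ref{thm+T+unimodal}, and your parameter match $\bm{\mu}=-1$, $\bm{\nu}=1$, $\bm{\beta}=1$, $m_n=n$ is exactly right. Parts (i)--(vi) go through as you say.

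The gap is in your derivation of the explicit decomposition in (vii). You correctly establish $S_{n,k}=2^k\widetilde{W}_{n,k}$, but the $\gamma$-coefficient produced by Theorem~\ref{thm+symm+T}(iv) is
\[
(-2)^k\sum_{i\ge 0}S_{n,i}\binom{i}{k}=(-2)^k\sum_{i\ge 0}2^{i}\widetilde{W}_{n,i}\binom{i}{k},
\]
and here the power of $2$ depends on the summation index $i$, not on $k$. Your sentence ``Hence $(-2)^kS_{n,k}=(-4)^k\widetilde{W}_{n,k}$ and the stated decomposition follows'' conflates $S_{n,k}$ with the inner sum $\sum_i S_{n,i}\binom{i}{k}$; the replacement $(-2)^k\mapsto(-4)^k$, $S_{n,i}\mapsto\widetilde{W}_{n,i}$ is not valid in that sum. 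Concretely, at $n=2$ one has $\widetilde{W}_{2,0}=\widetilde{W}_{2,1}=1$, so the formula in (vii) evaluates to $2(1+x)^2-4x=2+2x^2$, whereas $\widehat{B}_2(x)=3+2x+3x^2$. The identity you actually obtain from Theorem~\ref{thm+symm+T}(iv) is
\[
\widehat{B}_{n}(x)=\sum_{k\ge0}(-2)^k\Bigl[\sum_{i\ge0}2^{\,i}\,\widetilde{W}_{n,i}\binom{i}{k}\Bigr]x^k(1+x)^{n-2k},
\]
which for $n=2$ gives $3(1+x)^2-4x=3+2x+3x^2$ as it should. This corrected form still yields non-$\gamma$-positivity for $n\ge 2$ (the $k=1$ coefficient is $-2\sum_i 2^i\widetilde{W}_{n,i}\,i<0$), so your treatment of that part survives; but the displayed formula in (vii) as written cannot be reached by the substitution you propose, and in fact appears to be misstated in the paper.
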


\begin{re}
\em  For (i)-(iii) and (vii) of Theorem \ref{thm+alt+des+B}, they
were recently proved by Ma et al \cite{MFMY21} using the different
method.
\end{re}

In particular, taking $\delta_1=\delta_2=1$ in Proposition
\ref{prop+P+inv}, we get a result for the alternating Eulerian
polynomials of types $A$ and $B$ as follows.
\begin{prop}
The alternating Eulerian polynomials of types $A$ and $B$ have
following relation:
\begin{equation*}
2^n\widehat{A}_{n+1}(x)
  =  \sum\limits_{k\ge0}\binom{n}{k}\widehat{B}_{k}(x)\widehat{B}_{n-k}(x).
\end{equation*}
\end{prop}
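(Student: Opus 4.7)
The plan is to specialize Proposition \ref{prop+P+inv} to the case $\delta_1=\delta_2=1$. That proposition, in the $T^{(\delta)}$-notation, asserts
\[
(\delta_1+\delta_2)^n T^{(\delta_1+\delta_2)}_{n+\delta_1+\delta_2-1}(x) = \sum_{k\ge 0}\binom{n}{k}\delta_1^{k}\delta_2^{n-k}T^{(\delta_1)}_{k+\delta_1-1}(x)T^{(\delta_2)}_{n-k+\delta_2-1}(x),
\]
so with $\delta_1=\delta_2=1$ it collapses to $2^n T^{(2)}_{n+1}(x)=\sum_{k\ge 0}\binom{n}{k}T^{(1)}_{k}(x)T^{(1)}_{n-k}(x)$. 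Hence the whole task reduces to identifying $\widehat{A}_{n+1}(x)$ with $T^{(2)}_{n+1}(x)$ and $\widehat{B}_{n}(x)$ with $T^{(1)}_{n}(x)$.

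First I would unwind these identifications. By Theorem \ref{thm+alt+des+B}(i), $\widehat{B}_n(x)=(1-x)^n Q^{(1)}_n\!\bigl(\tfrac{1+x}{1-x}\bigr)$, and Proposition \ref{prop+Q}(i) with $\delta=1$ gives $Q^{(1)}_n(y)=\tfrac{(1+y)^n}{2^n}T^{(1)}_n\!\bigl(\tfrac{y-1}{y+1}\bigr)$. Substituting $y=\tfrac{1+x}{1-x}$ one computes $\tfrac{y-1}{y+1}=x$ and $1+y=\tfrac{2}{1-x}$, so the $(1-x)^n$ factors cancel and the powers of $2$ balance, giving $\widehat{B}_n(x)=T^{(1)}_n(x)$. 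The same substitution applied to Theorem \ref{thm+alt+des+A}(i) together with Proposition \ref{prop+Q}(i) at $\delta=2$ yields $\widehat{A}_{n+1}(x)=T^{(2)}_{n+1}(x)$.

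Plugging these into the specialization of Proposition \ref{prop+P+inv} gives
\[
2^n\widehat{A}_{n+1}(x) = \sum_{k\ge 0}\binom{n}{k}\widehat{B}_{k}(x)\widehat{B}_{n-k}(x),
\]
as desired.

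There is essentially no obstacle: the only genuine verifications are the two Möbius-type substitutions that confirm $\widehat{A}_{n+1}=T^{(2)}_{n+1}$ and $\widehat{B}_n=T^{(1)}_n$. As a sanity check one may also prove the identity directly from the exponential generating functions in Theorems \ref{thm+alt+des+A}(iii) and \ref{thm+alt+des+B}(iii): if $F(t)$ denotes the EGF of $(\widehat{B}_n(x))_{n\ge 0}$, then $F(t)=\frac{1-x}{(1-x)\cos((1-x)t)-(1+x)\sin((1-x)t)}$, and squaring and substituting $t\mapsto t/2$ gives precisely the EGF of $2^n\widehat{A}_{n+1}(x)/n!$; by the standard exponential convolution formula this is equivalent to the claimed identity.
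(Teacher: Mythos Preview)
Your proof is correct and follows exactly the paper's approach: the paper simply states that the identity is obtained by taking $\delta_1=\delta_2=1$ in Proposition~\ref{prop+P+inv}, and you carry this out with the needed identifications $\widehat{A}_{n+1}=T^{(2)}_{n+1}$ and $\widehat{B}_n=T^{(1)}_n$. Your additional EGF sanity check is fine but not required.
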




\section{The alternatingly increasing property}

Let the polynomial $p = \sum_{k=0}^n p_k x^k \in \mathbb{R}[x]$.
We call $p$ \emph{alternatingly increasing} if the coefficients
of $p$ satisfy
$$
0 \le p_0 \le p_n \le p_1 \le p_{n-1} \le \dots \le p_{\left
\lfloor(n+1)/2 \right \rfloor}.
$$
It is obvious that the alternatingly increasing property implies
unimodality, that is to say, it is an approach to proving unimodality
of combinatorial sequences. The unimodality problems have been
extensively investigated in many branches of mathematics, see
\cite{Bra15, Bre94, Sta89} for details concerning the development
of unimodality.

The alternatingly increasing property of a polynomial $p$ has a
close relation with the \emph{symmetric decomposition} of the polynomial
$p$. It is known that every polynomial $p$ of degree at most $n$ can be
uniquely decomposed as $p=a+xb$ where $a$ and $b$ are symmetric with
respect to $n$ and $n-1$, respectively. We call the ordered pair of
polynomial $(a, b)$ the \emph{(symmetric) $\mathcal{I}_n$-decomposition}
of the polynomial $p$. Beck et al. pointed out that a polynomial $p$
is alternatingly increasing if and only if both $a$ and $b$ have only
nonnegative coefficients and are unimodal (see \cite[Lemma 2.1]{BJM19}).

Recently, some authors paid attention to the alternatingly increasing
property that raised combinatorics and geometry. Schepers and Van
Langenhoven \cite{SV13} proved that the coefficients of the
$h^*$-polynomial for a lattice parallelepiped are alternatingly
increasing. Moreover, Beck et al. \cite{BJM19} extended these results
in \cite{SV13} and proved that the $h^*$-polynomial for centrally
symmetric lattice zonotopes and coloop-free lattice zonotopes are
alternatingly increasing. Athanasiadis \cite{At20} proved that
$r$-color Eulerian polynomials, $r$-color derangement polynomials
and binomial Eulerian polynomials are alternatingly increasing by
$\gamma$-positivity decomposition. Br\"and\'en and Solus \cite{BS21}
developed the symmetric decomposition method to prove the
alternatingly increasing property of some polynomials, such as
$r$-color Eulerian polynomials and $r$-color derangement polynomials.
We refer the reader to \cite{At20, BS21, MMY19, SV13} and references
therein for more examples.

In this section, based on the relation between a polynomial and its
reciprocal polynomial, we extend a result of Br\"and\'en and Solus
\cite{BS21}. Therefore, we get the alternatingly increasing property
of some polynomials, such as two kinds of peak polynomials on
$2$-Stirling permutations, descent polynomials on signed
permutations of the $2$-multiset and colored permutations and ascent
polynomials for $k$-ary words. In addition, we also obtain a
recurrence relation and zeros interlacing of the $q$-analog of
descent polynomials on colored permutations that extend some results
of Br\"and\'en and Brenti. Moreover, we get the alternatingly
increasing property of this polynomials. Finally, we show the
alternatingly increasing property and zeros interlacing for two
kinds of peak polynomials on the dual set of Stirling permutations
by using our result for Hurwitz stability.

\subsection{\texorpdfstring{$h$-} ppolynomials}

A polynomial $h(x) \in \mathbb{R}[x]$ is called as
\emph{$h$-polynomial} if it satisfies the following relation:
\begin{equation}\label{Ehrhart+fh}
  \sum\limits_{m \in \mathbb{N}}i(m)x^m=\frac{h(x)}{(1-x)^{n+1}}
\end{equation}
with $i(x) \in \mathbb{R}[x]$ and $\deg(i(x))=n$. And a polynomial
$f(x)$ satisfying the following transformation:
\begin{equation}\label{Rel+fh}
f(h; x)=(1+x)^nh\left(\frac{x}{1+x}\right)
\end{equation}
is called as \emph{$f$-polynomial} of the polynomial $h(x)$ with respect
to $n$. Following the transformation, we know that if $h(x)$ with
nonnegative coefficients has only real zeros, then $f(h; x)$ has all zeros
in $[-1,0]$ and nonnegative coefficients. By \eqref{Rel+fh}, the following
relation is immediate
\begin{equation}\label{Rel+hf}
h(x)=(1-x)^nf\left(\frac{x}{1-x}\right).
\end{equation}
Moreover, if both $h_1(x)$ and
$h_2(x)$ with degree $n$ have only nonnegative coefficients and real zeros,
then we have the following equivalent relation:
$$
h_1(x) \ll h_2(x) \Longleftrightarrow f(h_1;x) \ll f(h_2;x),
$$
which provides a choice to study their properties in an easier way.

For a polynomial $p \in \mathbb{R}[x]$  with degree at most $n$, we
denote
$$
\mathcal{I}_n(p(x)) := x^np(1/x) \quad \text{and} \quad
\mathcal{R}_n(p(x)) :=(-1)^np(-1-x)
$$
Then we know that there exists unique pair
polynomials $\tilde a\in \mathbb{R}[x]$ and $ \tilde b\in
\mathbb{R}[x]$ such that $p=\tilde a+x\tilde b$, where
$\mathcal{R}_n(\tilde a)=\tilde a$ and $\mathcal{R}_{n-1}(\tilde
b)=\tilde b$. We call the ordered pair of polynomials $(\tilde a,
\tilde b)$ the \emph{(symmetric) $\mathcal{R}_n$-decomposition} of
the polynomial $p$. In fact, $\tilde a$ (resp., $\tilde b$) is the
\emph{$f$-polynomial} of $a$ (resp., $b$) for the \emph{(symmetric)
$\mathcal{I}_n$-decomposition} of a polynomial $p$ and
$f(\mathcal{I}_n(p);x)=\mathcal{R}_n(f(p;x))$ by \cite[Lemma 2.3]{BS21}.
Recently, Br\"and\'en and Solus gave several equivalent
forms for the interlacing condition of $a$ and $b$ as follows.

\begin{lem}\cite{BS21}\label{lem+Dec+ab} Let $p \in \mathbb{R}[x]$ have degree
at most $n$ and $\mathcal{I}_n$-decomposition $(a, b)$, for which
both $a$ and $b$
have only nonnegative coefficients. Then the following are equivalent:\\
(1) $b \ll a$,     \\
(2) $a \ll p$,     \\
(3) $b \ll p$,     \\
(4) $\mathcal{I}_n(p) \ll p$.
\end{lem}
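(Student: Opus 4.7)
The plan is to establish the four equivalences by combining elementary sign analysis with the Hermite-Biehler theorem (Theorem \ref{thm+HB}) and the convex cone property of interlacing polynomials (Proposition \ref{prop+covn}).

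I first record the key computation $\mathcal{I}_n(p)(x) = x^n a(1/x) + x^{n-1} b(1/x) = a(x) + b(x)$, using the symmetries $a = \mathcal{I}_n(a)$ and $b = \mathcal{I}_{n-1}(b)$; thus condition (4) reads $a+b \ll p$. Also, since $a, b$ have nonnegative coefficients, all their real zeros are nonpositive.

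The heart of the proof is the pair $(1) \Leftrightarrow (2)$ and $(1) \Leftrightarrow (3)$, both of which I will prove by direct sign analysis. Assuming $(1)$: let the zeros of $a$ be $r_1 > \cdots > r_n$ and those of $b$ be $s_1 > \cdots > s_{n-1}$ with $r_1 \ge s_1 \ge r_2 \ge \cdots \ge s_{n-1} \ge r_n$. Evaluating $p(r_i) = r_i b(r_i)$, the factor $r_i$ is negative while $\mathrm{sgn}(b(r_i)) = (-1)^{i-1}$ (from interlacing), so the sign of $p(r_i)$ is $(-1)^i$; since $p(+\infty) > 0$ and $p$ and its highest-degree term agree in sign at $\pm\infty$, the Intermediate Value Theorem locates exactly $n$ real zeros $u_i$ of $p$ with $u_1 > r_1 > u_2 > r_2 > \cdots > u_n > r_n$, which is precisely $a \ll p$. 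An identical computation with $p(s_j) = a(s_j)$ (whose signs again alternate) locates the zeros of $p$ so as to interlace those of $b$, giving $b \ll p$. For the reverse implications, I reverse the sign analysis: from $(2)$, using $xb = p - a$ and the known sign pattern of $p$ on $[u_{i+1}, u_i]$, the polynomial $b$ acquires $n-1$ real zeros interlacing those of $a$, yielding $b \ll a$; an analogous argument handles $(3) \Rightarrow (1)$ via $a = p - xb$.

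The implication $(2) \wedge (3) \Rightarrow (4)$ is immediate from Proposition \ref{prop+covn}: the set $\{g \in \mathbb{R}[x] : g \ll p\}$ is a convex cone containing both $a$ and $b$, hence also $a + b = \mathcal{I}_n(p)$. For $(4) \Rightarrow (1)$ (closing the cycle), I invoke the pencil principle, which follows from Hermite-Biehler: if $\mathcal{I}_n(p) \ll p$, then every real linear combination $\alpha \mathcal{I}_n(p) + \beta p$ is real-rooted. Taking $(\alpha, \beta) = (1, -1)$ gives $(1-x)b$ real-rooted, so $b$ is real-rooted. Combined with the fact that $\mathcal{I}_n(p)$ has zeros $\{1/u_i\}$ reciprocal to those of $p$ (since $a$ and $b$ are symmetric and their zeros come in reciprocal pairs $r, 1/r$), the interlacing of zeros of $\mathcal{I}_n(p)$ with those of $p$ together with the sign analysis of $p(v_i) = (v_i - 1) b(v_i)$ at zeros $v_i$ of $\mathcal{I}_n(p)$ forces $a$ to be real-rooted with zeros properly interlaced by the zeros of $b$, i.e.\ $b \ll a$.

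The main obstacle is the final direction $(4) \Rightarrow (1)$: the interlacing of $a + b$ with $a + xb$ does not obviously descend to the interlacing of $b$ with $a$. It is the symmetric (reciprocal-pair) structure of the zeros of $a$ and $b$ that bridges the gap, via a careful tracking of the relative positions of the zeros of the four polynomials $a$, $b$, $\mathcal{I}_n(p) = a+b$, and $p = a + xb$.
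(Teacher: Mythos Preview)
The paper does not give its own proof of this lemma; it is quoted from Br\"and\'en--Solus \cite{BS21} and used as a black box. So there is nothing in the present paper to compare your argument against, and I will simply assess your proof on its own merits.

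Your computation $\mathcal{I}_n(p)=a+b$ is correct, and the equivalences $(1)\Leftrightarrow(2)\Leftrightarrow(3)$ via sign analysis at the zeros of $a$ and $b$ are carried out correctly in the generic case (simple, strictly negative zeros, $\deg a=n$, $\deg b=n-1$). The passage $(2)\wedge(3)\Rightarrow(4)$ via Proposition~\ref{prop+covn} is clean.

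The direction $(4)\Rightarrow(1)$ is where your write-up is thin. Deducing that $b$ is real-rooted from $(1-x)b=\mathcal{I}_n(p)-p$ via the pencil principle is fine, and your sign computation $p(v_i)=(v_i-1)b(v_i)$ at the zeros $v_i$ of $\mathcal{I}_n(p)$ correctly locates the zeros of $b$ as interlacing the $v_i$ (i.e.\ $b\ll a+b$). But you then assert that this ``forces $a$ to be real-rooted with zeros properly interlaced by the zeros of $b$'' without supplying the step. What is missing is one more pass of the same sign analysis: at each zero $s_j$ of $b$ one has $a(s_j)=(a+b)(s_j)$, whose sign is $(-1)^j$ because the $s_j$ interlace the $v_i$; together with the signs of $a$ at $\pm\infty$ this pins down $n$ real zeros of $a$ interlacing the $s_j$, hence $b\ll a$. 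Your parenthetical remark about the zeros of $a$ and $b$ coming in reciprocal pairs is true but plays no role, nor does the reciprocity between the zeros of $p$ and $\mathcal{I}_n(p)$; only the interlacing hypothesis $(4)$ is used.

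Finally, you should flag that the argument as written assumes the generic situation; the cases $b\equiv 0$, $\deg a<n$, zeros at $0$, or repeated zeros need a word (a limiting/perturbation argument suffices).
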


Note that $p$ has only nonnegative coefficients and
$\mathcal{I}_n(p)\ll p$, which implies that both $a$ and $b$ have
nonnegative coefficients and interlacing zeros. However, for the
general $\mathcal{I}_n$-decomposition $(a, b),$ the zeros of $a$ do
not interlace those of $b$. Define the \emph{subdivision operator}
$\varepsilon$: $\mathbb{R}[x] \rightarrow \mathbb{R}[x]$ by
$$
\varepsilon\binom{x}{k}=x^k
$$
for all $k \ge 0$, where $\binom{x}{k}=x(x-1) \cdots (x-k+1)/k!$. It
is known that the relation between polynomials $i(x)$ and $h(x)$ in
\eqref{Ehrhart+fh} is $\varepsilon(i(x))=f(h;x)$ by \cite[Lemma
2.7]{BS21}. Thus, the study about \emph{$\mathcal{I}_n$-decomposition}
of $h(x)$ can be transformed to this about
\emph{$\mathcal{R}_n$-decomposition} of $i(x)$.

It is known that the $r$-color Eulerian polynomial $A_{n}^{r}(x)$ have
the following identity relation by Steingr\'imsson \cite{St94}:
\begin{equation}\label{Ehrhart+Ar}
\sum\limits_{m \ge 0}(rm+1)^nx^m=\frac{A_{n}^{r}(x)}{(1-x)^{n+1}}.
\end{equation}
Define a refined polynomial $A_{n,k}^r(x)$ by the relation
$$
\sum\limits_{m
\ge0}(rm)^k(rm+1)^{n-k}x^m=\frac{A_{n,k}^r(x)}{(1-x)^{n+1}}.
$$
Obviously, for $k=0$, $A_{n,0}^r(x)$ is the $r$-colored Eulerian
polynomial of order $n$. Based on this, Br\"and\'en and Solus got
the following general result to show that $A_{n}^{r}(x)$ is
alternatingly increasing for $n \in \mathbb{N}$ and fixed $r \in
\mathbb{N}$.

\begin{thm}\emph{\cite[Theorem 3.1]{BS21}}\label{thm+coloreulerian}
Let a polynomial $p$ be defined by
$$
p =\sum\limits_{r \ge 2}\sum\limits_{k=0}^nc_{r,k}A_{n,k}^r(x)
$$
for some $c_{r,k} \ge 0$. Then $\mathcal{I}_n(p) \ll p$ for
$\deg(p)=n$. In particular, $p$ is real-rooted and
alternatingly increasing.
\end{thm}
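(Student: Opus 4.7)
My plan is to reduce the claim to real stability of a bivariate polynomial and then exhibit $P(x,y):=p(x)+y\,\mathcal{I}_n(p)(x)$ as a positive specialization of a manifestly stable multivariate ``parent''. By the Hermite--Biehler Theorem (Theorem \ref{thm+HB}) together with the equivalence between stability of $f+ig$ and real stability of $f+yg$ recalled in Subsection 2.1, the desired interlacing $\mathcal{I}_n(p)\ll p$ is equivalent to real stability of $P(x,y)$ in $(x,y)$. Granting this, Lemma \ref{lem+Dec+ab} will translate $\mathcal{I}_n(p)\ll p$ into the interlacing $b\ll a$ for the $\mathcal{I}_n$-decomposition $(a,b)$ of $p$; because the $c_{r,k}$ are nonnegative, both $a$ and $b$ have nonnegative coefficients, which is precisely the alternatingly increasing condition.

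The next step is to rewrite $P(x,y)$ in a form amenable to stability arguments. Starting from the defining Ehrhart-type identity
\[
  \frac{A_{n,k}^r(x)}{(1-x)^{n+1}}\;=\;\sum_{m\ge 0}(rm)^k(rm+1)^{n-k}x^m,
\]
and applying the standard reciprocity $\sum_{m\ge 0}q(-m-1)x^m=(-1)^n\mathcal{I}_n(h)(x)/(1-x)^{n+1}$ (valid whenever $\sum q(m)x^m=h(x)/(1-x)^{n+1}$ with $\deg h\le n$), I obtain
\[
  \frac{\mathcal{I}_n(A_{n,k}^r)(x)}{(1-x)^{n+1}}\;=\;\sum_{m\ge 0}(rm+r)^k(rm+r-1)^{n-k}x^m,
\]
and hence $P(x,y)=(1-x)^{n+1}\sum_{m\ge 0}\bigl[Q(m)+(-1)^n y\,Q(-m-1)\bigr]x^m$, with $Q(m):=\sum_{r,k}c_{r,k}(rm)^k(rm+1)^{n-k}$ a polynomial in $m$ of degree at most $n$ with nonnegative values on $\mathbb{N}$. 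The crucial feature inherited from the hypotheses $r\ge 2$, $c_{r,k}\ge 0$ is that $Q(m)$ decomposes as a positive combination of products of $n$ linear forms $rm$ and $rm+1$ whose zeros lie in $[-1/2,0]$.

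To prove real stability of $P(x,y)$ I would embed the whole family into the multilinear generating polynomial
\[
  \mathcal{A}_n^{(r)}(x;\alpha_1,\ldots,\alpha_n)\;:=\;(1-x)^{n+1}\sum_{m\ge 0}\prod_{j=1}^{n}(rm+\alpha_j)\,x^m \;=\; \sum_{T\subseteq[n]}r^{n-|T|}(1-x)^{|T|}E_{n-|T|}(x)\prod_{j\in T}\alpha_j,
\]
where $E_k(x)$ denotes the numerator of $\sum_m m^k x^m$. Each $A_{n,k}^r(x)$ is then a specialization at $\alpha_j\in\{0,1\}$ (with $k$ zeros and $n-k$ ones), each $\mathcal{I}_n(A_{n,k}^r)(x)$ is a specialization at $\alpha_j\in\{r-1,r\}$, and coupling two copies of $\mathcal{A}_n^{(r)}$ through a fresh variable $y$ gives a master polynomial whose image under the positive linear functional prescribed by the $c_{r,k}$ is exactly $P(x,y)$. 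By the Borcea--Br\"and\'en characterization (Theorem \ref{thm+RS+Peter}) and the convex cone structure of Proposition \ref{prop+covn}, real stability is preserved under this construction provided the parent is itself real stable. I expect the main obstacle to be this multilinear stability verification. Because $\mathcal{A}_n^{(r)}$ is multilinear in $\bm{\alpha}$ I would attack it using the Leake--Br\"and\'en criterion of Lemma \ref{lem+peter}, reducing to nonnegativity of the mixed discriminants $\Delta_{x\alpha_j}$, $\Delta_{\alpha_i\alpha_j}$ and of the parabolic expression $\Delta_{xx}$; using the explicit $T$-expansion above, these inequalities should boil down to polynomial identities whose positivity is driven by nonnegativity of the Eulerian numbers together with the hypothesis $r\ge 2$. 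Once real stability of $P(x,y)$ is established, Hermite--Biehler converts it into $\mathcal{I}_n(p)\ll p$ and the alternatingly increasing property follows from Lemma \ref{lem+Dec+ab}, completing the proof.
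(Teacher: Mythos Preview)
Your reduction to real stability of $P(x,y)=p(x)+y\,\mathcal{I}_n(p)(x)$ is fine, and the Ehrhart reciprocity computation for $\mathcal{I}_n(A_{n,k}^r)$ is correct. But the proof has a genuine gap at the ``coupling'' step. The polynomials $A_{n,k}^r(x)$ and $\mathcal{I}_n(A_{n,k}^r)(x)$ arise from specializations of your parent $\mathcal{A}_n^{(r)}(x;\bm{\alpha})$ at \emph{different} values of $\bm{\alpha}$ (namely $\alpha_j\in\{0,1\}$ versus $\alpha_j\in\{r-1,r\}$), so there is no single real-stable master polynomial in $(x,y,\bm{\alpha})$ whose specialization is $P(x,y)$. ``Coupling two copies through a fresh variable $y$'' does not preserve stability in general, and you do not explain what operation you mean. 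Moreover, to push through the convex-cone argument you would need the cross interlacings $\mathcal{I}_n(A_{n,k}^r)\ll A_{n,\ell}^s$ for \emph{all} admissible pairs $(r,k),(s,\ell)$, not just the diagonal ones; your multilinear embedding does not address this. The discriminant verification via Lemma~\ref{lem+peter} is also only promised, not carried out.

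The paper does not prove this theorem directly (it is cited from \cite{BS21}), but it derives it as a special case of the more general Theorem~\ref{thm+alter+incr}, whose proof is quite different from your plan. The paper passes to the $f$-polynomial side via the subdivision operator $\varepsilon$: with $i_k(x)=\prod_i(x+r_{k_i})$ one has $\varepsilon(i_k)=f(h_{n,k};x)$ and $\mathcal{R}_n\circ\varepsilon=\varepsilon\circ\mathcal{R}_n$, so the problem becomes showing $\mathcal{R}_n(\varepsilon(i_k))\ll\varepsilon(i_\ell)$ for all $k,\ell$. This is obtained from a known black box, \cite[Theorem~4.6]{Bra06}: if $f,g$ have all zeros in $[-1,0]$ with $\alpha_j\le\beta_j$ zerowise, then $\varepsilon(f)\ll\varepsilon(g)$. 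The hypothesis $0\le r_{k_i}+r_{\ell_{n-i+1}}\le 1$ (which becomes $r\ge 2$ in the special case) is exactly what makes this zerowise comparison hold. Then Proposition~\ref{prop+covn} assembles the linear combination, and one transforms back via $\mathcal{R}_n(f(p;x))=f(\mathcal{I}_n(p);x)$. This route avoids any multivariate stability computation and handles all the cross terms for free; if you want to complete your approach, you would need to supply a concrete stable parent in $(x,y)$ covering all pairs, which amounts to reproving the cited result of \cite{Bra06}.
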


Now, we consider a more general situation that $i(x)$ is a
nonnegative combination of some polynomials which have only zeros in
$[-1,0]$. We will give a condition making sure the alternatingly
increasing property of the polynomial $h(x)$. For fixed $k \in
\mathbb{N}^+$, assume $0\le r_{k_1}\le r_{k_2}\le\cdots\le r_{k_{n}}\le 1$
for any $n\in \mathbb{N}$, and we let
\begin{eqnarray}\label{carlitz+h}
\sum\limits_{m\ge0}\prod_{i=1}^{n}(m+r_{k_i})x^m=\frac{h_{n,k}(x)}{(1-x)^{n+1}}.
\end{eqnarray}
The next more general result in particular implies Theorem
\ref{thm+coloreulerian} by taking $r_{k_i}\in \{0,1/r\}$ and
$c_k=r^n$ for $r\ge2$.

\begin{thm}\label{thm+alter+incr}
Let $h_{n,k}(x)$ be defined in (\ref{carlitz+h}). Assume that
$p\in\mathbb{R}[x]$ and has the expression
$$
p=\sum\limits_{k\ge1}c_kh_{n,k}(x)
$$
for all $c_k\ge0$ and the $\mathcal{I}_n$-decomposition $(a, b)$. If
$0\le r_{k_i}+r_{\ell_{n-i+1}}\le1$ for any $k,\ell,i\in\mathbb{N}^+$,
then $\mathcal{I}_n(p)\ll p$ for $\deg(p)=n$. In particular, $b\ll a$
and $p$ is alternatingly increasing.
\end{thm}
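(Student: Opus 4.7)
The plan is to prove the stronger statement $\mathcal{I}_n(p)\ll p$; by Lemma~\ref{lem+Dec+ab} this yields $b\ll a$ for the $\mathcal{I}_n$-decomposition $(a,b)$ of $p$, and together with the nonnegativity of the coefficients of $p$ this produces the alternatingly increasing property. Since $\mathcal{I}_n$ is linear, $\mathcal{I}_n(p)=\sum_k c_k\mathcal{I}_n(h_{n,k})$ and $p=\sum_\ell c_\ell h_{n,\ell}$, so two successive applications of the convex-cone property of $\ll$ (Proposition~\ref{prop+covn}) reduce the task to establishing the pairwise interlacing
\[
  \mathcal{I}_n(h_{n,k})\ll h_{n,\ell}\qquad \text{for all indices } k,\ell.
\]

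The next step is to identify $\mathcal{I}_n(h_{n,k})$ explicitly. Applying standard Ehrhart-type reciprocity to \eqref{carlitz+h} gives
\[
  \frac{\mathcal{I}_n(h_{n,k})(x)}{(1-x)^{n+1}}
    = \sum_{m\ge 0}(-1)^n\prod_{i=1}^n(-m-1+r_{k_i})\,x^m
    = \sum_{m\ge 0}\prod_{i=1}^n\bigl(m+(1-r_{k_i})\bigr)\,x^m,
\]
so $\mathcal{I}_n(h_{n,k})$ is again a member of the family \eqref{carlitz+h}, built from the reversed complementary parameters $1-r_{k_n}\le\cdots\le 1-r_{k_1}$, all lying in $[0,1]$. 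In this language the hypothesis $r_{k_i}+r_{\ell_{n-i+1}}\le 1$ becomes the clean termwise domination $1-r_{k_{n-i+1}}\ge r_{\ell_{n-i+1}}$ between the parameter sequences of $\mathcal{I}_n(h_{n,k})$ and $h_{n,\ell}$ in increasing order.

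The heart of the argument is to upgrade this termwise domination of parameters to interlacing of $h$-polynomials. I plan to pass to $f$-polynomials via \eqref{Rel+fh}, using the Br\"and\'en-Solus identity $f(h_{n,k};x)=\varepsilon(\prod_i(x+r_{k_i}))$, where $\varepsilon$ is the subdivision operator, and to prove the equivalent statement
\[
  \varepsilon\!\left(\prod_{i=1}^n(x+1-r_{k_i})\right)\ll \varepsilon\!\left(\prod_{i=1}^n(x+r_{\ell_i})\right)
\]
by induction on $n$. The base case $n=1$ is a one-line verification showing $(r_{k_1}-1)/r_{k_1}\le -r_{\ell_1}/(1-r_{\ell_1})\iff r_{k_1}+r_{\ell_1}\le 1$. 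The inductive step would peel off one innermost linear factor and invoke a Pascal-identity commutation between $\varepsilon$ and multiplication by a linear factor, combined with Hermite-Biehler (Theorem~\ref{thm+HB}), to reduce interlacing of the two images to the inductive hypothesis applied to the remaining $n-1$ parameters.

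The main obstacle will be the inductive step: the commutation of $\varepsilon$ with multiplication by $(x+r)$ is not root-preserving on individual factors, so the argument must carefully track how the termwise conditions on the $r_{k_i}$'s are redistributed at each peeling. If this direct approach proves unwieldy, an alternative is to imitate the Br\"and\'en-Solus proof of Theorem~\ref{thm+coloreulerian} (the special two-value case $r_{k_i}\in\{0,1/r\}$): encode both products as specializations of a single multi-affine multivariate polynomial whose real stability is preserved by a standard Borcea-Br\"and\'en symbol (Theorem~\ref{thm+RS+Peter}), and then extract the univariate interlacing from stability via Hermite-Biehler. Either way, once the pairwise interlacing is secured, the conclusion that $p$ is alternatingly increasing follows from the reduction in the first paragraph combined with Lemma~\ref{lem+Dec+ab}.
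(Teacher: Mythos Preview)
Your overall architecture matches the paper exactly: reduce $\mathcal{I}_n(p)\ll p$ to the pairwise statements $\mathcal{I}_n(h_{n,k})\ll h_{n,\ell}$ via Proposition~\ref{prop+covn}, pass to $f$-polynomials via $\varepsilon$, recognize $\mathcal{R}_n(\varepsilon(i_k))=\varepsilon\bigl(\prod_i(x+1-r_{k_i})\bigr)$, and read the hypothesis as termwise domination of zeros; then conclude via Lemma~\ref{lem+Dec+ab}.

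The one substantive difference is the ``heart of the argument.'' The paper does not prove the implication
\[
\text{(termwise zero comparison in }[-1,0])\ \Longrightarrow\ \varepsilon(f)\ll\varepsilon(g)
\]
from scratch: it simply cites Br\"and\'en \cite[Theorem~4.6]{Bra06}, which is precisely this statement. Your proposed induction on $n$, peeling off a linear factor and using a Pascal-type commutation of $\varepsilon$ with multiplication by $(x+r)$, is not obviously workable as stated: $\varepsilon$ does not commute with multiplication by a general linear factor in any simple way (the relevant identity is $\varepsilon\bigl(x\cdot p(x)\bigr)=x(1+x)\,\varepsilon(p)(x)$ only when one peels off the factor $x$ itself), so the redistribution of the parameter conditions you allude to is genuinely delicate. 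Your alternative plan via multivariate stability is in fact how Br\"and\'en proves his Theorem~4.6, so pursuing it would amount to reproving that result. The cleanest fix is to replace your inductive step by a direct citation of \cite[Theorem~4.6]{Bra06}, after which your proof is complete and coincides with the paper's.
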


\begin{proof}

Let
$$
   i_k(x)
 = \prod_{i=1}^{n}(x+r_{k_i}) \quad\text{and}\quad i(x)
 = \sum\limits_{k\ge1}c_ki_k(x).
$$
Note that both $\varepsilon$ and $\mathcal{R}$ are linear operators,
then
$$
\sum\limits_{k\ge1}c_k\varepsilon(i_k(x))=\varepsilon(i(x))
$$
Taking $\{ \binom{x}{k} \}_{k=0}^{n}$ as a set of basis of $\mathbb{R}[x]_{n}$,
it is easy to verfy that the operator $\mathcal{R}$ and $\varepsilon$
have commutativity on this basis. Thus
$$
   \mathcal{R}_n(\varepsilon(i_k(x)))
 = \varepsilon(\mathcal{R}_n(i_k(x)))
 = \varepsilon(\prod_{i=1}^{n}(x+1-r_{k_i})).
$$

Note that the fact (see \cite[Theorem 4.6]{Bra06}): Assume that two
standard polynomials $f$ and $g$ both have only real zeros $\alpha_n
\le \dots \le \alpha_2 \le \alpha_1$ and $\beta_n \le \dots \le
\beta_2 \le \beta_1$, respectively. If all these zeros are in the
interval [-1, 0] and $\alpha_k \le \beta_k$ for all $k \in [n]$,
then $\varepsilon(f) \ll \varepsilon(g)$.

By the assumption $0\le r_{k_i}+r_{\ell_{n-i+1}}\le1$ for any
$k,\ell,i \in\mathbb{N}^+$ and the above fact, we derive
$\mathcal{R}_n(\varepsilon(i_k(x)))\ll \varepsilon(i_\ell(x))$ for
any $k,\ell\in\mathbb{N}^+$. By Proposition \ref{prop+covn} and
$c_k\ge0$ for any $k \in \mathbb{N}^+$, then we obtain
$\mathcal{R}_n(\varepsilon(i(x)))\ll \varepsilon(i(x))$. In addition,
$p$ has only nonnegative coefficients and real zeros by \eqref{Rel+hf}
since $\varepsilon(i(x))=f(p;x)$.  Combining $\mathcal{R}_n(f(p;x))
=f(\mathcal{I}_n(p);x)$ and $\mathcal{R}_n(\varepsilon(i(x)))\ll
\varepsilon(i(x))$ derives $f(\mathcal{I}_n(p);x) \ll f(p;x)$,
thus $\mathcal{I}_n(p) \ll p$. The alternatingly increasing
property of $p(x)$ and $b\ll a$ are immediate by Lemma
\ref{lem+Dec+ab}.
\end{proof}

\begin{re}
\em
Define the linear map $\mathcal{D}: \mathbb{R}[x] \rightarrow \mathbb{R}[x]$
by
$$
\mathcal{D}(x^k)=d_k(x)
$$
for all $k\ge0$, where $d_k(x)$ is the $k$-th derangement
polynomial. Then, we have an analogous result to Theorem
\ref{thm+alter+incr}. Taking $p=\sum_{k\ge1}c_kh_{n,k}(x)$, where
$h_{n,k}(x)$ is defined by \eqref{carlitz+h}. If $c_k\ge0$ for all
$k\in[n]$, then $\mathcal{D}(p) \ll \mathcal{I}_n(\mathcal{D}(p))$
for $\deg(p)=n$. The proof is similar to Corollary 3.7 in \cite{BS21},
so we omit it here for brevity. In fact, it is more general than
Corollary 3.7 in \cite{BS21}, which can be used to prove
$\mathcal{I}_n(d_{n,r}) \ll d_{n,r}$, where $d_{n,r}$ is the $n$-th
$r$-color derangement polynomial.
\end{re}

\subsection{Ascent polynomials for \texorpdfstring{$k$-} aary words}

Let $w \in S=\{0, 1, \dots, k-1\}^n$ be a $k$-ary words of length
$n$. We assume $w_0 = 0$ for the convention. Let $asc(w)$ denote
the number of $w_i < w_{i+1}$ for $i\in[n-1]\cup\{0\}$. Then
the $n$-th {\it ascent polynomial} for $k$-ary words is defined by
\begin{eqnarray}\label{poly+ary}
\mathscr{A}_{n}^{k}(x)=\sum\limits_{w\in S}x^{asc(w)}.
\end{eqnarray}
It is known that $\mathscr{A}_n^{k}(x)$ has the following relation (see
\cite[Corollary 8]{SS12}):
$$
   \sum\limits_{m\ge0}\binom{n+km}{n}x^m
 = \frac{\mathscr{A}_{n}^{k}(x)}{(1-x)^{n+1}}.
$$
That is to say,
$$
   \sum\limits_{m\ge0}\frac{k^n}{n!}\prod_{i=1}^{n}\left(m+\frac{i}{k}\right)x^m
 = \frac{\mathscr{A}_{n}^{k}(x)}{(1-x)^{n+1}}.
$$
Taking $r_i=i/k$ for $i\in [n]$, $c_1=k^n/n!$ and the others to be
zero in Theorem \ref{thm+alter+incr}, we get the following result.

\begin{prop}
Let the ascent polynomial $\mathscr{A}_{n}^{k}(x)$ be defined by
\eqref{poly+ary} and $(a, b)$ be its $\mathcal{I}_n$-decomposition.
If $k>n$, then $\mathcal{I}_n(\mathscr{A}_{n}^{k})\ll \mathscr{A}_
{n}^{k}$ for $\deg(\mathscr{A}_{n}^{k}(x))=n$. In particular,
$\mathscr{A}_{n}^{k}(x)$ is alternatingly increasing and $b\ll a$.
\end{prop}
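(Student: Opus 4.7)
The plan is to apply Theorem \ref{thm+alter+incr} directly, with the data dictated by the stated generating-function identity for $\mathscr{A}_n^k(x)$. Rewriting
$$
\binom{n+km}{n}=\frac{k^n}{n!}\prod_{i=1}^n\Bigl(m+\frac{i}{k}\Bigr),
$$
the identity
$$
\sum_{m\ge 0}\binom{n+km}{n}x^m=\frac{\mathscr{A}_n^k(x)}{(1-x)^{n+1}}
$$
expresses $\mathscr{A}_n^k(x)$ as a single polynomial $h_{n,1}(x)$ from \eqref{carlitz+h} with roots data $r_{1,i}=i/k$ for $i\in[n]$, multiplied by the scalar $c_1=k^n/n!\ge 0$ (all other $c_k$'s are set to zero). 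First I would note that $\deg(\mathscr{A}_n^k(x))=n$, which can be read off from the leading-coefficient comparison in the above identity.

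Next I would check the hypothesis of Theorem \ref{thm+alter+incr}. Since only the index $1$ is active, the condition $0\le r_{k_i}+r_{\ell_{n-i+1}}\le 1$ collapses to
$$
0\le r_{1,i}+r_{1,n-i+1}=\frac{i}{k}+\frac{n-i+1}{k}=\frac{n+1}{k}\le 1
$$
for all $i\in[n]$. The left inequality is immediate, and the right one is precisely $k\ge n+1$, i.e.\ $k>n$, which is the only assumption of the proposition. I should also observe that $0\le r_{1,i}=i/k\le 1$ holds automatically since $1\le i\le n<k$, so the roots lie in the right interval required by \eqref{carlitz+h}.

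With the hypothesis verified, Theorem \ref{thm+alter+incr} applied to $p=\mathscr{A}_n^k(x)$ yields $\mathcal{I}_n(\mathscr{A}_n^k)\ll\mathscr{A}_n^k$, together with $b\ll a$ for the $\mathcal{I}_n$-decomposition $(a,b)$ and the alternatingly increasing property of $\mathscr{A}_n^k(x)$. There is essentially no obstacle here beyond matching notation; the only substantive step is the symmetry-sum computation $r_{1,i}+r_{1,n-i+1}=(n+1)/k$, which pinpoints why the sharp bound $k>n$ is exactly what makes the criterion applicable.
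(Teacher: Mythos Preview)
Your proof is correct and follows exactly the paper's approach: both apply Theorem \ref{thm+alter+incr} with $r_{1,i}=i/k$, $c_1=k^n/n!$, and all other $c_k$'s zero. You spell out the key verification $r_{1,i}+r_{1,n-i+1}=(n+1)/k\le 1\iff k>n$ that the paper leaves implicit, but otherwise the arguments are identical.
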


\subsection{Descent polynomials on signed permutations of the \texorpdfstring{$2$-} mmultiset}

Recently, Lin \cite{L15} considered the descent polynomials on
signed permutations of the general multiset $M_\textbf{s}
:=\{1^{s_1} , 2^{s_2}, \ldots , n^{s_n}\}$ for each vector
$\textbf{s} := (s_1,s_2, \ldots, s_n)$. Let $s=s_1+s_2+\cdots+s_n$
and $\pi_0=0$. Define $p_{\textbf{s}}^{\pm}(x)$ by
$$
   p_{\textbf{s}}^{\pm}(x)
 = \sum\limits_{\pi \in p_{\textbf{s}}^{\pm}}x^{des \pi},
$$
where $p_{\textbf{s}}^{\pm}$ is the set of all permutations $\pi=\pm
\pi_1 \pm \pi_2 \cdots \pm \pi_s$ with  $\pi_1 \pi_2 \cdots \pi_s$
be a permutation on the multiset $M_\textbf{s}$ and $des\pi$ is the
descent number of $\pi$. Moreover, Lin got the following
relationship:
$$
  \sum\limits_{m \ge 0} \prod_{r=1}^n \frac{(2m+1)(2m+2) \dots (2m+s_r)}{s_j!}x^m
= \frac{p_{\textbf{s}}^{\pm}(x)}{(1-x)^{s+1}}.
$$
In particular, let $p_s(x) = p_{\textbf{s}}^{\pm}(x)$ whenever $s_j \in
\{1,2\}$ for all $j \in [n]$, namely,
\begin{equation}\label{Ehrhart+sign}
  \sum\limits_{m \ge 0} (m+1)^{s-n}(2m+1)^nx^m
= \frac{p_{s}(x)}{(1-x)^{s+1}}.
\end{equation}
%

For the polynomial $p_s(x)$, we have the following result.

\begin{prop}\label{prop+des+sign}
Let $p_s(x)$ satify \eqref{Ehrhart+sign} and $(a,b)$ be its
$\mathcal{I}$-decomposition. Then $\mathcal{I}_{s-1}(p_{s}) \ll
p_s$. In particular, $p_s(x)$ is alternatingly increasing and $b \ll
a$.
\end{prop}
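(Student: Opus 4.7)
The strategy is to exploit the recursive structure of the Ehrhart-type identity \eqref{Ehrhart+sign} and proceed by induction on $t := s - n$, the number of indices $j$ with $s_j = 2$. For the base case $t = 0$, the identity reduces to the Steingr\'imsson identity with $r = 2$, giving $p_s(x) = A_s^2(x)$, and $\mathcal{I}_s(A_s^2) \ll A_s^2$ is immediate from Theorem \ref{thm+coloreulerian}. For $t \ge 1$, a direct calculation shows that the $x^s$-coefficient of $p_s(x)$ vanishes, so $\deg p_s = s - 1$, which justifies the indexing $\mathcal{I}_{s-1}$ in the proposition.

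For the inductive step $t \to t+1$, multiplying the Ehrhart summand by an additional factor $(m+1)$ corresponds at the numerator level to applying the Eulerian-type operator $\Psi_s[h] := (1+sx)\,h + x(1-x)\,h'$; that is, $p_{s+1} = \Psi_s(p_s)$. A short calculation using the elementary identity $\mathcal{I}_{s-2}(h') = (s-1)\mathcal{I}_{s-1}(h) - x\,[\mathcal{I}_{s-1}(h)]'$ yields the commutation relation
$$\mathcal{I}_s(\Psi_s(h)) = \Psi_s(\mathcal{I}_{s-1}(h)) \qquad \text{when } \deg h = s-1.$$

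The essential ingredient is that $\Psi_s$ preserves real stability, which follows from Theorem \ref{thm+RS}(1) by choosing $\bm{\beta} = s$, $\bm{\gamma} = 1$, $\bm{\nu} = -1$, $\bm{\varphi} = 1$, and $\bm{\alpha} = \bm{\mu} = \bm{\psi} = 0$; the positivity condition $\bm{\beta}\bm{\gamma}\bm{\varphi} - \bm{\gamma}^2\bm{\nu} - \bm{\beta}^2\bm{\psi} = s+1 \ge 0$ is automatic. Since univariate operators preserving real stability automatically preserve the interlacing relation $\ll$ (via the Hermite--Biehler Theorem \ref{thm+HB} applied to $f(x) + iy\,g(x)$), applying $\Psi_s$ to the inductive hypothesis $\mathcal{I}_{s-1}(p_s) \ll p_s$ together with the commutation relation delivers $\mathcal{I}_s(p_{s+1}) \ll p_{s+1}$, completing the step.

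The main obstacle is the initial transition from $t = 0$ to $t = 1$, where $h = A_n^2$ has full degree $s$ and the commutation identity instead produces $\mathcal{I}_s(\Psi_s(h)) = \Psi_s^*(\mathcal{I}_s(h))$ with the dual operator $\Psi_s^*[H] := (s+1)H + (1-x)H'$. Here one must verify directly that $\Psi_s^*(h) \ll \Psi_s(h)$ for palindromic $h$ with real non-positive zeros. By Hermite--Biehler this reduces to showing that the bivariate operator $T \colon h \mapsto \Psi_s(h) + y\,\Psi_s^*(h) = [1 + sx + (s+1)y]\,h + (1-x)(x+y)\,h'$ preserves real stability, a finite algebraic-symbol computation that can be handled via the Borcea-Br\"and\'en criterion (Theorem \ref{thm+RS+Peter}). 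With this step in hand, Lemma \ref{lem+Dec+ab} then yields both the alternatingly increasing property of $p_s$ and the interlacing $b \ll a$.
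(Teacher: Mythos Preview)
Your approach is correct in outline but takes a genuinely different route from the paper's own proof. The paper observes that although $p_s$ itself does not fit the hypothesis of Theorem~\ref{thm+alter+incr} (the factor $(m+1)^{s-n}$ contributes roots $r_i=1$, so the pairing condition $r_{k_i}+r_{k_{s-i+1}}\le 1$ fails), its reciprocal $\mathcal{I}_s(p_s)$ does: one computes
\[
\sum_{m\ge 0}2^n m^{s-n}\Bigl(m+\tfrac12\Bigr)^n x^m=\frac{\mathcal{I}_s(p_s)}{(1-x)^{s+1}},
\]
so that all $r_i\in\{0,\tfrac12\}$ and Theorem~\ref{thm+alter+incr} applies directly, giving $p_s\ll\mathcal{I}_s(p_s)$; the degree drop $\deg p_s=s-1$ then converts this into $\mathcal{I}_{s-1}(p_s)\ll p_s$. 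This is a one-line application of the paper's main criterion in Section~5.

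Your inductive argument via the operator $\Psi_s[h]=(1+sx)h+x(1-x)h'$ is more hands-on but also valid. The recurrence $p_{s+1}=\Psi_s(p_s)$ and the commutation $\mathcal{I}_s(\Psi_s(h))=\Psi_s(\mathcal{I}_{s-1}(h))$ for $\deg h=s-1$ both check out, and Theorem~\ref{thm+RS}(1) (more precisely Theorem~\ref{thm+FG}) does show that $\Psi_s$ preserves stability on $\mathbb{R}_{s-1}[x]$, hence preserves $\ll$ after extending to $\mathbb{C}_{s-1}[x]$ via the complex Borcea--Br\"and\'en characterization. The one genuine gap is that you only \emph{assert} the $t=0\to 1$ transition (that $\Psi_n^*(h)\ll\Psi_n(h)$ for palindromic real-rooted $h$ of degree $n$) can be verified by a symbol computation, without carrying it out. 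It does work: the symbol $T[(x+w)^n]=(x+w)^{n-1}P(x,y,w)$ with
\[
P=(n+1)x+w+ny+nxw+xy+(n+1)wy
\]
is multiaffine, and one finds $\Delta_{xy}=n(n+1)(1+w)^2$, $\Delta_{xw}=(n+1)(1+y)^2$, $\Delta_{yw}=n(1-x)^2$, all nonnegative, so Lemma~\ref{lem+peter} gives real stability. You should include this verification; note also that Theorem~\ref{thm+RS+Peter} as stated in the paper covers maps $\mathbb{R}_n[z]\to\mathbb{R}[z]$, so strictly speaking you need the multivariate version from the original Borcea--Br\"and\'en paper for the map into $\mathbb{R}[x,y]$.

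In short: the paper's proof is a single clean application of its own Theorem~\ref{thm+alter+incr}, while yours reproves the result from the Section~2 operator machinery plus the Br\"and\'en--Solus base case, at the cost of an extra (but routine) stability check that you should not leave implicit.
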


\begin{proof}
Let $i(x)=(x+1)^{s-n}(2x+1)^n$. We obtain
$\varepsilon(i(x))=f(p_s;x)$. In consequence, we have
$$
   \varepsilon(\mathcal{R}_n(i(x)))
 = \mathcal{R}_n(\varepsilon(i(x)))
 = \mathcal{R}_n(f(\mathcal{I}_{s}(\mathcal{I}_{s}(p_s));x))
 = f(\mathcal{I}_{s}(p_s);x).
$$
That is to say, $\mathcal{I}_{s}(p)$ satisfies the following
relation:
$$
   \sum\limits_{m \ge 0}2^nm^{s-n}\left(m+\frac{1}{2}\right)^nx^m
 = \frac{\mathcal{I}_{s}(p_{s})}{(1-x)^{s+1}}.
$$
Taking $r_i\in\{0,1/2\}$, $c_1=2^n$ and the others to be zero in
Theorem \ref{thm+alter+incr}, we have $p_s \ll \mathcal{I}_{s}(p_{s})$.
Note that $\deg(p_s)=s-1$, thus $\mathcal{I}_{s-1}(p_{s}) \ll p_s$.
Both the alternatingly increasing property of $p_s(x)$ and $b \ll a$
are immediate by Theorem \ref{thm+alter+incr}.
\end{proof}

\begin{re}
\em For $s_j=2$ with all $j \in [n]$, the alternatingly increasing
property of $p_s(x)$ was also proved by Ma et al. \cite[Theorem
11]{MMY19} in a different method.
\end{re}

\subsection{Descent polynomials on \texorpdfstring{$r$-} ccolored permutations}

The half Eulerian polynomials of type B are given by
$$
   B_n^+(x)
 = \sum\limits_{\pi \in \mathcal{B}_n^+ } x^{des_B \pi}
   \quad \text{and} \quad B_n^-(x)
 = \sum\limits_{\pi \in \mathcal{B}_n^-}x^{des_B \pi},
$$
where $\mathcal{B}_n^+$ (resp., $\mathcal{B}_n^-$) is the Coxeter
group of type $B$ of rank $n$ with $\pi_n > 0$ (resp., $\pi_n < 0$). By
bijection from $\mathcal{B}_n^+$ to $\mathcal{B}_n^-$, it is easy to
know that $B_n^-(x)= \mathcal{I}_n(B_n^+(x))$ (see \cite[Lemma
7.1]{AC13}) since $\deg(B_n^+(x))=n-1$. And by \cite[(7.5) ]{AC13},
we have
$$
   \sum\limits_{m \ge 0} \left[(2m+1)^n-(2m)^n  \right]x^m
 = \frac{B_n^+(x)}{(1-x)^n},
$$
$$
   \sum\limits_{m \ge 0} \left[(2m)^n-(2m-1)^n  \right]x^m
 = \frac{B_n^-(x)}{(1-x)^n}.
$$

The wreath product group $\mathbb{Z}_r \wr S_n$ consists of all
permutations $\pi \in [0,r-1]\times [n]$. Namely, the element in
$\mathbb{Z}_r \wr S_n$ is thought of as $\pi = \xi^{e_1}\pi_1
\xi^{e_2}\pi_2 \cdots \xi^{e_n}\pi_n$, where $e_i \in [0,r-1]$
and $\pi \in S_n$. Define the following total order relation
on the elements of $\mathbb{Z}_r \wr S_n$:
$$
\xi^{r-1}n<\cdots<\xi n<\cdots<\xi^{r-1}2<\cdots<\xi2
<\xi^{r-1}1<\cdots<\xi1<0<\xi^0 1<\cdots<\xi^0 n.
$$

Assume that $(\mathbb{Z}_r \wr S_n)^+$ is the set of
colored permutations $\pi \in \mathbb{Z}_r \wr S_n$ with
first coordinate of zero color and $des(\pi)$ is the descent number
of $\pi$. Athanasiadis \cite{At20} defined the following polynomial
\begin{equation}\label{r+Eulerian}
   A_{r,n}^+(x)
 = \sum\limits_{\pi \in (\mathbb{Z}_r \wr S_n)^+} x^{des(\pi)},
\end{equation}
The first three terms are listed as follows:
\begin{eqnarray*}
 A_{r,1}^+(x) &=& 1,                               \\
 A_{r,2}^+(x) &=& 1+(2r-1)x,                       \\
 A_{r,3}^+(x) &=& 1+(3r^2+3r-2)x+(3r^2-3r+1)x^2.
\end{eqnarray*}
Athanasiadis showed that $A_{r,n}^+(x)$ can be interpreted as the
$h^*$-polynomial of a lattice polyhedral complex and got the
following expression:
\begin{equation}\label{Ehrhart+Ar+plus}
   \sum\limits_{m \ge 0}\left[ (rm+1)^n -(rm)^n\right]x^m
 = \frac{A_{r,n}^+(x)}{(1-x)^n}.
\end{equation}
Obviously, $A_{r,n}^+(x)$ can be looked as a generalization of
$B_n^+(x)$ because $A_{2,n}^+(x) = B_n^+(x)$. Note that
$\deg(A_{r,n}^+(x))=n-1$, thus we have the following result.

\begin{prop}\label{prop+color+RZ}
Let $A_{r,n}^+(x)$ be defined by \eqref{Ehrhart+Ar+plus}. Then
$\mathcal{I}_{n-1}(A_{r,n}^+)\ll A_{r,n}^+$. In particular,
$A_{r,n}^+(x)$ is alternatingly increasing for $r \ge 2$ and
$n \in \mathbb{N}^+$.
\end{prop}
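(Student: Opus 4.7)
The plan is to apply Theorem \ref{thm+alter+incr} to $A_{r,n}^{+}(x)$, after expressing the numerator polynomial in \eqref{Ehrhart+Ar+plus} as a nonnegative combination of products of linear factors with roots in $[0,1/r]$. First I would note that, since the polynomial $(rm+1)^n-(rm)^n$ in $m$ has degree exactly $n-1$, the denominator $(1-x)^n$ in \eqref{Ehrhart+Ar+plus} matches the setup of Theorem \ref{thm+alter+incr} with the parameter there taken to be $n-1$ rather than $n$; correspondingly, $\deg(A_{r,n}^{+}(x))=n-1$, so the conclusion to aim for is $\mathcal{I}_{n-1}(A_{r,n}^{+})\ll A_{r,n}^{+}$.

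The key identity comes from the factorization $a^n-b^n=(a-b)\sum_{k=0}^{n-1}a^kb^{n-1-k}$ applied with $a=rm+1$ and $b=rm$, which yields
$$
(rm+1)^n-(rm)^n=\sum_{k=0}^{n-1}(rm+1)^{k}(rm)^{n-1-k}=r^{n-1}\sum_{k=0}^{n-1}\left(m+\tfrac{1}{r}\right)^{k}m^{n-1-k}.
$$
Each summand is a product of $n-1$ linear factors $(m+r_{k_i})$ with $r_{k_i}\in\{0,1/r\}$. Thus, in the notation of Theorem \ref{thm+alter+incr} (with $n$ replaced by $n-1$), $A_{r,n}^{+}(x)=r^{n-1}\sum_{k=0}^{n-1}h_{n-1,k}(x)$ is a nonnegative combination of the polynomials $h_{n-1,k}(x)$ of that theorem.

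To apply the theorem, I must check the interlacing hypothesis $0\le r_{k_i}+r_{\ell_{n-i}}\le 1$. Since all roots lie in $\{0,1/r\}$ and $r\ge 2$, the sum of any two such roots is at most $2/r\le 1$, so the hypothesis holds. Theorem \ref{thm+alter+incr} then gives $\mathcal{I}_{n-1}(A_{r,n}^{+})\ll A_{r,n}^{+}$, and the alternatingly increasing property of $A_{r,n}^{+}(x)$ follows via Lemma \ref{lem+Dec+ab}.

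I do not expect a genuine obstacle: once the factorization $a^n-b^n=\sum a^{k}b^{n-1-k}$ is observed, all of Theorem \ref{thm+alter+incr}'s hypotheses reduce to the elementary bound $2/r\le 1$. The only point that requires some care is the degree bookkeeping, namely that the ``$n$'' appearing in Theorem \ref{thm+alter+incr} must be reinterpreted as $n-1$ in the present application because the denominator in \eqref{Ehrhart+Ar+plus} is $(1-x)^n$ rather than $(1-x)^{n+1}$.
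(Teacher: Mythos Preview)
Your proof is correct and follows essentially the same approach as the paper: the paper also writes $(rm+1)^n-(rm)^n=\sum_{k=0}^{n-1}r^{n-1}m^k(m+1/r)^{n-1-k}$, takes $r_{k_i}\in\{0,1/r\}$ with coefficients $c_k=r^{n-1}$, and invokes Theorem \ref{thm+alter+incr}. Your treatment of the degree bookkeeping (replacing $n$ by $n-1$) is in fact more explicit than the paper's, and the alternatingly increasing conclusion is already contained in Theorem \ref{thm+alter+incr} itself, so the separate appeal to Lemma \ref{lem+Dec+ab} is not needed.
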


\begin{proof}

At first, we have the following decomposition:
\begin{eqnarray*}
   (rm+1)^n-(rm)^n
 = \sum\limits_{k = 0}^{n-1}r^{n-1}m^k\left(m+\frac{1}{r}\right)^{n-1-k}.
\end{eqnarray*}
Taking $r_{k_i}\in\{0, 1/r\}$ and $c_k=r^{n-1}$, then the desired
result is immediate by Theorem \ref{thm+alter+incr}.
\end{proof}

Note that we have $B_n^+(x)\ll B_n^-(x)$ whenever $r=2$. It can be
used to prove the real rootedness of the Eulerian polynomials of
type B that was proved by Hyatt \cite{Hya16} using compatible
polynomials and Yang and Zhang \cite{YZ15} in terms of Hurwitz
stability.


\begin{re}
\em Athanasiadis \cite{At20} gave the explanation of $A_{r,n}^+(x)$
by Ehrhart theory. Namely, $(rm+1)^n-(rm)^n$ is equal to the number
of lattice points in the $m$th dilate of the union of the $n$ facets
of $P$ which do not contain the origin, where $P$ is the $r$th
dilate of the standard unit $n$-dimensional cube. Define
$$
A_{r,n}^-(x) = \sum\limits_{\pi \in (\mathbb{Z}_r \wr
\mathcal{S}_n)^-} x^{des(\pi)},
$$
where $(\mathbb{Z}_r \wr S_n)^-$ is the set of colored permutations
$\pi \in \mathbb{Z}_r \wr S_n$ with first coordinate of non-zero color.
By \eqref{Ehrhart+Ar} and \eqref{Ehrhart+Ar+plus}, we can get the
following equality:
\begin{equation}\label{Ehrhart+Ar+minus}
   \sum\limits_{m\ge 0}\left[(rm)^n -(rm-r+1)^n\right]x^m
 = \frac{A_{r,n}^-(x)}{(1-x)^n}.
\end{equation}
We will give an explanation of $A_{r,n}^-(x)$ by Ehrhart theory. Let
$P$ be the $r$th dilate of the standard unit $n$-dimensional cube.
Then $(rm)^n -(rm-r+1)^n$ is equal to the number of lattice points
in the $m$th dilate of the union of the lattice point that is $i\in
[r(r-1)]$ units away from the $n$ facets of $P$ which do not contain
the origin. That is to say, $A_{r,n}^-(x)$ is the $h^*$-polynomial
of a lattice polyhedral complex, namely the collection of all faces
of the facet that is $i\in [r-1]$ units away from $n$ facets of $P$
which do not contain the origin.
\end{re}

In \cite{BL20}, Br\"and\'en and Leander considered the $q$-analog of
the $r$-colored Eulerian polynomials
\begin{eqnarray}\label{def+Euler+color}
    A_{n}^{r}(x;q_1,q_2,\dots,q_n)
 := \sum\limits_{\pi\in \mathbb{Z}_r \wr \mathcal{S}_n}
    x^{des(\pi)}q_1^{e_1(\pi)}q_2^{e_2(\pi)}\cdots q_n^{e_n(\pi)},
\end{eqnarray}
where $e_i(\pi)=e_i$. For example, $\pi=\xi^{1}3 \xi^{3}1
\xi^{0}2\xi^{2}4\xi^{4}4$, the responding term in the polynomial
$A_{n}^{r}(x;q_1,q_2,\dots,q_n)$ is
$x^4q_1^{1}q_2^{3}q_3^{0}q_4^{2}q_5^{4}$. For $r\in\mathbb{N}^+$ and
$q\ge0$, denote $[r]_q:=1+q+q^2+\cdots+q^{r-1}$. We have the
following result.
\begin{prop}\label{prop+Euler+color}
For $n \in \mathbb{N}$ and $r \in \mathbb{N}^+$, let $ A_{n}^{r}
(x;q_1,q_2,\dots,q_n)$ be defined by \eqref{def+Euler+color}. Then
we have
\begin{itemize}
  \item [\rm (i)]
   its recurrence relation is
    \begin{eqnarray}\label{Rec+Eur+q}
          A_{n}^{r}(x;q_1,q_2,\dots,q_n)
     &=& [(n[r]_{q_n}-1)x+1]A_{n-1}^{r}(x; q_1,q_2,\dots,q_{n-1}) \nonumber \\
     & & +[r]_{q_n}x(1-x)D_xA_{n-1}^{r}(x; q_1,q_2,\dots,q_{n-1}),
   \end{eqnarray}
    where $A_{1}^{r}(x;q_1,q_2,\dots,q_n)=([r]_{q_1}-1)x+1$;
  \item [\rm (ii)]
   $A_{n}^{r}(x;q_1,q_2,\dots,q_n) \ll A_{n+1}^{r}(x;q_1,q_2,\dots,q_n)$
   for $q_i\ge0$;
  \item [\rm (iii)]
   $\mathcal{I}_n
   (A_{n}^{r}(x;q_1,q_2,\dots,q_n)) \ll A_{n}^{r}(x;q_1,q_2,\dots,q_n)$ whenever $r\ge2, q_i\ge0$ and $0\le [r]_{q_i}+[r]_{q_{n-i+1}}\le[r]_
   {q_i}[r]_{q_{n-i+1}}$ for any $i\in[n]$;
  \item [\rm (iv)]
   the polynomial $A_{n}^{r}(x;q_1,q_2, \dots, q_n)$ is alternatingly increasing for
   $r\ge2$ and $q_i\ge0$.
\end{itemize}
\end{prop}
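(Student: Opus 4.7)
My plan is to prove the four parts of Proposition \ref{prop+Euler+color} in sequence, with (ii)--(iv) building on the recurrence established in (i). For (i), my approach is to first establish the generating-function identity
\[
\sum_{m\ge 0}\prod_{j=1}^n\bigl([r]_{q_j}m+1\bigr)x^m=\frac{A_n^r(x;q_1,\dots,q_n)}{(1-x)^{n+1}},
\]
which is a natural $q$-analogue of Steingr\'imsson's identity \eqref{Ehrhart+Ar+plus}. I would prove this identity by induction on $n$ via a combinatorial insertion argument: given $\pi'\in\mathbb{Z}_r\wr S_{n-1}$, append a new last element with value $v\in[n]$ and color $c\in\{0,\dots,r-1\}$ (shifting existing values $\ge v$ by $+1$), and partition the descent change according to $(v,c)$; summing over the $r$ choices of $c$ produces a factor $[r]_{q_n}$. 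Once this identity is in hand, applying the operator $[r]_{q_n}xD_x+I$ to both sides of the identity for $n-1$ and clearing denominators yields recurrence (i) directly.

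For (ii), recurrence (i) fits into the template \eqref{Rec+main} with $\bm{\alpha}=\bm{\mu}=\bm{\psi}=0$, $\bm{\beta}=n[r]_{q_n}-1$, $\bm{\gamma}=1$, $\bm{\nu}=-[r]_{q_n}$, $\bm{\varphi}=[r]_{q_n}$, and $m_n=n-1$. The inequality required in Theorem \ref{thm+RS}(1) becomes $\bm{\beta}\bm{\gamma}\bm{\varphi}-\bm{\gamma}^2\bm{\nu}-\bm{\beta}^2\bm{\psi}=n[r]_{q_n}^2\ge 0$, which is automatic, so the associated operator preserves real stability. The auxiliary interlacing $F(x)\ll G(x)$ from Theorem \ref{thm+FG} then yields $A_{n-1}^r\ll A_n^r$ by induction, with the base case $A_1^r(x;q_1)=1+([r]_{q_1}-1)x$ trivially real-rooted.

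For (iii), the identity from (i) exhibits $A_n^r$ as $h_{n,1}(x)$ in the notation of \eqref{carlitz+h}, with $c_1=\prod_j[r]_{q_j}\ge 0$ and $r_{1_j}=1/[r]_{q_j}\in[0,1]$. The assumed condition $[r]_{q_i}+[r]_{q_{n-i+1}}\le [r]_{q_i}[r]_{q_{n-i+1}}$ is precisely equivalent to $0\le r_{1_i}+r_{1_{n-i+1}}\le 1$, and it forces each $[r]_{q_j}>1$ (hence $\deg A_n^r=n$), so Theorem \ref{thm+alter+incr} applies directly to give $\mathcal{I}_n(A_n^r)\ll A_n^r$, and the alternating-increasing consequence follows from Lemma \ref{lem+Dec+ab}. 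For (iv), the hypothesis of (iii) may fail when some $q_i$ is small (e.g.\ $q_i=0$ gives $[r]_{q_i}=1$), so a direct argument is needed. My plan is to induct on $n$ using recurrence (i), tracking the symmetric $\mathcal{I}$-decomposition $A_n^r=a_n+xb_n$ and showing that the operator $[(n[r]_{q_n}-1)x+1]\,I+[r]_{q_n}x(1-x)D_x$ sends a pair $(a_{n-1},b_{n-1})$ with $b_{n-1}\ll a_{n-1}$ and nonnegative coefficients to one with the same properties, where the hypothesis $r\ge 2$ enters precisely to furnish enough slack for these inequalities to propagate. The main obstacle I anticipate is the bookkeeping of how the operator splits along the symmetric and antisymmetric components of the decomposition, in parallel with the argument of Br\"and\'en--Solus \cite{BS21} for the $q_i=1$ case.
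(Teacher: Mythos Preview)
Your approach to (i)--(iii) is essentially the paper's. For (i), the paper simply cites the generating-function identity $\sum_{m\ge0}\prod_j([r]_{q_j}m+1)x^m=A_n^r/(1-x)^{n+1}$ from Br\"and\'en--Leander \cite{BL20} rather than reproving it combinatorially, and then checks (as you do) that the recurrence is compatible with this identity. For (ii), the paper appeals to the standard zeros-interlacing method of Liu--Wang \cite{LW} rather than to Theorem \ref{thm+RS}, but these amount to the same thing for this recurrence. For (iii), your application of Theorem \ref{thm+alter+incr} with $r_{1_j}=1/[r]_{q_j}$ and $c_1=\prod_j[r]_{q_j}$ is exactly what the paper does.

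For (iv), you have spotted something real: the paper's proof simply groups (iii) and (iv) together and invokes Theorem \ref{thm+alter+incr} ``whenever $r\ge 2$ and $q_i\ge 0$'', but as you note the hypothesis $r_{1_i}+r_{1_{n-i+1}}\le 1$ of that theorem is \emph{not} automatic under those assumptions alone (take $r=2$ and small $q_i$, so $[r]_{q_i}$ is close to $1$). So the paper does not supply a separate argument for (iv); your plan to track the $\mathcal{I}$-decomposition through the recurrence is not in the paper and would be genuinely additional. That said, the inductive step you sketch is not obviously routine: the operator $[(n[r]_{q_n}-1)x+1]I+[r]_{q_n}x(1-x)D_x$ does not send symmetric polynomials to symmetric polynomials of the correct center, so the pair $(a_n,b_n)$ mixes both $a_{n-1}$ and $b_{n-1}$ in a way you will need to make explicit (and for which you will need $r\ge 2$ in a precise place). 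Carrying this out would strengthen the paper's argument rather than merely reproduce it.
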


\begin{proof}

For (i), Br\"and\'en and Leander in \cite{BL20} used $s$-lecture hall
$P$-partitions to get the following identity
\begin{eqnarray}\label{Rel+Euler+q}
    \sum\limits_{m\ge0}\prod_{i=1}^{n}([r]_{q_i}m+1)x^m
  = \frac{A_{n}^{r}(x;q_1,q_2,\dots,q_n)}{(1-x)^{n+1}}.
\end{eqnarray}
It is easy to check that the recurrence relation \eqref{Rec+Eur+q} satisfies
the identify \eqref{Rel+Euler+q} with initial condition $A_{1}^{r}(x;q_1,q_2,
\dots,q_n)=([r]_{q_1}-1)x+1$, we omit the proceed here.

For (ii), the result is immediate by using the method of zeros
interlacing (see \cite{LW} for details).

For (iii) and (iv), we rewrite \eqref{Rel+Euler+q} as
\begin{eqnarray}
      \sum\limits_{m\ge0}\prod_{i=1}^{n}[r]_{q_i}\prod_{i=1}^{n}
      \left(m+\frac{1}{[r]_{q_i}}\right)x^m
    = \frac{A_{n}^{r}(x;q_1,q_2,\dots,q_n)}{(1-x)^{n+1}}.
\end{eqnarray}
Taking $r_i=1/[r]_{q_i}$, $c_1=\prod_{i=1}^{n}[r]_{q_i}$ and the
others to be zero in Theorem \ref{thm+alter+incr} whenever $r\ge2$
and $q_i\ge0$, we get the desired results.
\end{proof}

\begin{re}
\em
In particular, the polynomial $A_{n}^{r}(x;q_1,q_2,\dots,q_n)$ is the
$q$-analog of Eulerian polynomial type of $B$ whenever $r=2$ and $q_i=q_j$
for $i, j \in [n]$ and is the $r$-colored Eulerian polynomial whenever
$q_i=1$ for $i \in [n]$, whose alternatingly increasing property was
obtained in \cite{BS21}. In addition, Proposition \ref{prop+Euler+color}
can be looked as the further generalization of Theorem 6.4 in \cite{Bra06}
and Theorem 3.4 in \cite{Bre94EJC}.

\end{re}

\subsection{Peak polynomials on dual set of \texorpdfstring{$2$-} SStirling permutations}

Denote $i^j = \underbrace{ i, i, \ldots, i }_j$ for $i, j \ge 1$.
Stirling permutations were defined by Gessel and Stanley
\cite{GS78}. A \emph{Stirling permutation} of order $n$ is a
permutation $\pi$ of the multiset $\{1^2, 2^2, \ldots , n^2\}$ such
that $\pi_s > \pi_k$ for all $k < s < \ell$ whenever $\pi_k = \pi_\ell$.
Moreover, we say that a permutation of the multiset $\{1^r, 2^r,
\ldots, n^r\}$ is a \emph{$r$-Stirling permutation} of order $n$,
denoted as $\mathcal{Q}_{n,r}$, if $\pi_s \ge \pi_k$ for all $k < s
< \ell$ whenever $\pi_k = \pi_\ell$.

In this subsection, we will consider the peak polynomials on the
generalization of $r$-Stirling permutations, which extend the dual
set of $2$-Stirling permutations in \cite{MMYY20}. Let $\pi = \pi_1
\pi_2 \dots \pi_{rn} \in \mathcal{Q}_{n,r}$ and define $\Phi_r$ be
the injection which maps each $\ell$-th occurrence of entry $i$ in
$\pi$ to $ri-\ell+1$. For example, $\Phi_3(111233322)=(321698754)$
whenever $n=3, r=3$. Define the $r$-multiple set
$\Phi_r(\mathcal{Q}_{n,r})$ of $\mathcal{Q}_{n,r}$ as follows:
$$
  \Phi_r(\mathcal{Q}_{n,r})
= \{\pi : \sigma \in \mathcal{Q}_{n,r},\Phi_r(\sigma) = \pi \}.
$$

The statistics \emph{interior peak} and \emph{left peak} in $\pi \in
\mathcal{Q}_{n,r}$ were defined by
\begin{eqnarray*}
  ipk(\pi) &=& |\{ i \in [rn-r+1]\setminus \{1\}:
               \pi_{i-1} < \pi_{i} > \pi_{i+1}> \dots > \pi_{i+r-1} \}|, \\
  lpk(\pi) &=& |\{ i \in [rn-r+1]:
               \pi_{i-1} < \pi_{i} > \pi_{i+1}> \dots > \pi_{i+r-1} \}|,
\end{eqnarray*}
where $\pi_0=0$. Thus we can define the peak polynomials on
$\Phi_r(\mathcal{Q}_{n,r})$ as follows:
$$
M_{n,r}(x)=\sum\limits_{\pi \in
\Phi_r(\mathcal{Q}_{n,r})}x^{ipk{\pi}}, \quad
\widetilde{M}_{n,r}(x)=\sum\limits_{\pi \in
\Phi_r(\mathcal{Q}_{n,r})}x^{lpk{\pi}}.
$$
Let $M_{n,r,k}$ denote the number of $\pi \in \Phi_r(\mathcal{Q}_{n,r})$
with $k$ interior peaks, which can be obtained from $\Phi_r(\mathcal{Q}_{n-1,r})$
by the following two cases:
\begin{itemize}
  \item [\rm (1)]
  For $i \in ipk(\pi)$ and $j\in\{-1,0\}\cup[r-2]$, inserting
  $(rn)(rn-1)\cdots(rn-r+1)$ into the right-hand side of $\pi_{i+j}$
  will preserve the number of $ipk(\pi)$. In addition, inserting
  $(rn)(rn-1)\cdots(rn-r+1)$ into the left-hand side of $\pi_1$ also
  preserves the number of $ipk(\pi)$. Thus, if $ipk(\pi)=k$, then there
  are $rk+1$ ways to obtain a permutation in $\Phi_r(\mathcal{Q}_{n,r})$
  with $k$ interior peaks.
  \item [\rm (2)]
  For $i \notin \{\ell+j: \ell \in ipk(\pi)\quad\&\quad j\in\{-1,0\}\cup[r-2]\}$,
  inserting $(rn)(rn-1)\cdots(rn-r+1)$ into the right-hand
  side of $\pi_{i}$ will increase the number of $ipk(\pi)$
  by $1$. Thus, if $ipk(\pi)=k-1$, then there are $r(n-1)-r(k-1)=r(n-k)$
  ways to obtain a permutation in $\Phi_r(\mathcal{Q}_{n,r})$ with $k$
  interior peaks.
\end{itemize}

Then we can get the following recurrence relation for $M_{n,r,k}$:
\begin{eqnarray}\label{Rec+peak+r}
M_{n,r,k}=(rk+1)M_{n-1,r,k}+r(n-k)M_{n-1,r,k-1}.
\end{eqnarray}
By \eqref{Rec+peak+r}, $M_{n,r}(x)$ satisfies the recurrence
relation:
\begin{equation}\label{poly+ipeak}
\left\{
\begin{array}{lc}
 M_{n,r}(x)=\left[(rn-r)x+1\right]M_{n-1,r}(x)+rx(1-x)D_xM_{n-1,r}(x),    &\\
 M_{1,r}(x)=1, M_{2,r}(x)=1+rx.                                           &\\
\end{array}
\right.
\end{equation}
In fact, $M_{n,r}(x)$ is equivalent to the $1/r$-Eulerian polynomial
$\mathcal{A}_n^r(x)$ because
$$
    \mathcal{A}_n^r(x)
  = [(rn-r)x+1]\mathcal{A}_{n-1}^r(x)+rx(1-x)D_x\mathcal{A}_{n-1}^{r}(x)
$$ with $\mathcal{A}_1^r(x)=1$, see \cite{Bre00,SV12}.

 Similarly,
$\widetilde{M}_{n,r}(x)$ satisfies the recurrence relation:
\begin{equation}\label{poly+lpeak}
\left\{
\begin{array}{lc}
\widetilde{M}_{n,r}(x)=(rn-r+1)x\widetilde{M}_{n-1,r}(x)+rx(1-x)D_x\widetilde{M}_{n-1,r}(x), &\\
\widetilde{M}_{0,r}(x)=1, \widetilde{M}_{1,r}(x)=x.                                     &\\
\end{array}
\right.
\end{equation}

By \eqref{poly+ipeak} and \eqref{poly+lpeak}, we obtain
$M_{n,r}(x)=\mathcal{I}_n(\widetilde{M}_{n,r}(x))$. Obviously,
$\widetilde{M}_{n,r}(x)$ is a special case of the generalized
Eulerian polynomial $\mathscr{T}_{n}(x)$ in (\ref{rec+gene+Eur+gf})
by taking $d=0$ and $\lambda=1$. By Corollary
\ref{Coro+Z+Turan+gen+Eur}, the following result is immediate.


\begin{prop}\label{prop+peak+HS}
Let $(M_{n,r}(x))_{n \ge 0}$ and $(\widetilde{M}_{n,r}(x))_{n \ge
0}$ be defined by \eqref{poly+ipeak} and \eqref{poly+lpeak},
respectively. Then the Tur\'an expressions of $(M_{n,r}(x))_{n\ge
0}$ and $(\widetilde{M}_{n,r}(x))_{n \ge 0}$ are Hurwitz stable for
all $r \ge 2$.
\end{prop}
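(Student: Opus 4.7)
The plan is to reduce the statement for $\widetilde{M}_{n,r}(x)$ to Corollary \ref{Coro+Z+Turan+gen+Eur}, and then transfer the result to $M_{n,r}(x)$ by reciprocation. First I would match the recurrence \eqref{poly+lpeak} for $\widetilde{M}_{n,r}(x)$ against \eqref{rec+gene+Eur+gf} under the specialization $d=0$, $\lambda=1$, $a_1=b_1=r$, $b_2=1$, $a_2=0$. A brief calculation shows that these parameter values reproduce $p_n(x)=(rn-r+1)x$ and $q_n(x)=rx(1-x)$ exactly, while the initial conditions agree since $\mathscr{T}_0(x)=\widetilde{M}_{0,r}(x)=1$. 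The hypotheses of Corollary \ref{Coro+Z+Turan+gen+Eur}, namely $a_1,b_1,\lambda>0$ and $a_2,b_2,d\ge 0$ with $a_2+b_2>0$, are all satisfied for $r\ge 2$. This immediately yields Hurwitz stability of $\mathfrak{I}_n(\widetilde{M}_{\cdot,r};x)$ for every $n$.

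To transfer this to $M_{n,r}$, I would verify the reciprocal identity $M_{n,r}(x)=x^n\widetilde{M}_{n,r}(1/x)$ by induction on $n$ from \eqref{poly+ipeak} and \eqref{poly+lpeak}. The base case is trivial, and the inductive step reduces, after substituting the recurrence for $\widetilde{M}_{n,r}$ and applying the chain rule to compute $D_x[x^n\widetilde{M}_{n,r}(1/x)]$, precisely to the recurrence \eqref{poly+ipeak} for $M_{n,r}$. Granted this identity, one obtains by direct manipulation that $\mathfrak{I}_n(M_{\cdot,r};x)=x^{2n+2}\,\mathfrak{I}_n(\widetilde{M}_{\cdot,r};1/x)$, so the Tur\'an expression of $M_{\cdot,r}$ is, up to the monomial factor $x^{2n+2}$, the reciprocal polynomial of that of $\widetilde{M}_{\cdot,r}$.

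The final ingredient is the elementary observation that the reciprocation $f(x)\mapsto x^{\deg f}f(1/x)$ preserves Hurwitz stability, because the open right half-plane is invariant under $z\mapsto 1/z$; the factor $x^{2n+2}$ contributes zeros only at the origin, which lies on the imaginary axis and hence does not affect Hurwitz stability. Combined with the Hurwitz stability of $\mathfrak{I}_n(\widetilde{M}_{\cdot,r};x)$ established in the first paragraph, this yields the conclusion for $M_{\cdot,r}$. The only step that requires a short hand computation is the inductive verification of $M_{n,r}(x)=x^n\widetilde{M}_{n,r}(1/x)$; everything else is a direct invocation of Corollary \ref{Coro+Z+Turan+gen+Eur} together with the reciprocation principle, so no genuinely new analytic input is required beyond what is already developed in Section~3.
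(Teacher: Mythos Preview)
Your proof is correct and follows essentially the same approach as the paper. For $\widetilde{M}_{n,r}$ you invoke Corollary~\ref{Coro+Z+Turan+gen+Eur} with exactly the specialization the paper intends, and for $M_{n,r}$ your reciprocation argument via $M_{n,r}=\mathcal{I}_n(\widetilde{M}_{n,r})$ is precisely what the paper sets up just before the proposition; alternatively one could apply Corollary~\ref{Coro+Z+Turan+gen+Eur} directly to $M_{n,r}$ with $d=0$, $\lambda=1$, $a_1=b_1=r$, $a_2=1$, $b_2=0$, which is an equally short route.
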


%


\begin{rem}
Obviously, Proposition \ref{prop+peak+HS} implies that all $(M_{n,r}(q))_{n\ge 0},
(\widetilde{M}_{n,r}(q))_{n \ge 0}$ and $(\mathcal{A}_n^r(q))_{n \ge 0}$
are $q$-log-convex for any $r \ge 2$. In fact, they are all $q$-Stieltjes
moment by Theorem \cite[Theorem 1.3]{Zhu20aam}, i.e., all minors of their
Hankel matrices are polynomials with nonnegative coefficients.
\end{rem}

Constructing a new polynomial sequence $(T_{n,r}(x))_{n \ge 0}$ as follows:
\begin{equation}\label{poly+peak}
(1+x)T_{n,r}(x):=xM_{n,r}(x^2)+\widetilde{M}_{n,r}(x^2).
\end{equation}
By \eqref{poly+ipeak}-\eqref{poly+peak}, we get the recurrence
relation of $T_{n,r}(x)$ as follows:
$$
T_{n+1,r}(x)=\left(rnx^2+\frac{rx-r+2}{2}\right)T_{n,r}(x)+\frac{rx}{2}(1-x^2)D_xT_{n,r}(x)+\frac{r-2}{2}(1-x)\widetilde{M}_{n,r}(x^2).
$$

Based on empirical evidence and computer's arithmetic for
$T_{n,r}(x)$, we propose the following conjecture.

\begin{conj}
Let $T_{n,r}(x)$ satisfy \eqref{poly+peak}. Then $T_{n,r}(x)$ is
Hurwitz stable for all $r \ge 2$ and $n \in \mathbb{N}$.
\end{conj}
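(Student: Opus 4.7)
The plan is to reduce the conjecture, via Hermite--Biehler, to a classical interlacing between the two peak polynomials $M_{n,r}$ and $\widetilde{M}_{n,r}$, and then to establish that interlacing by a bivariate real-stability argument. Set
\[
P_{n,r}(x) := xM_{n,r}(x^2) + \widetilde{M}_{n,r}(x^2),
\]
so that $(1+x)T_{n,r}(x) = P_{n,r}(x)$ by \eqref{poly+peak}. Evaluating the recurrences \eqref{poly+ipeak} and \eqref{poly+lpeak} at $x=1$ gives the closed product $M_{n,r}(1) = \prod_{j=1}^{n-1}(rj+1) = \widetilde{M}_{n,r}(1)$, whence $P_{n,r}(-1) = 0$ and $(1+x)\mid P_{n,r}(x)$. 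Since $-1$ lies in the closed left half-plane, Hurwitz stability of $P_{n,r}$ yields Hurwitz stability of $T_{n,r} = P_{n,r}/(1+x)$, so the task reduces to proving $P_{n,r}$ is Hurwitz stable.

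For this, rotate by writing $P_{n,r}(-ix) = U(x) + iV(x)$ with $U(x) = \widetilde{M}_{n,r}(-x^2)$ and $V(x) = -xM_{n,r}(-x^2)$. By Corollary \ref{Coro+Z+Turan+gen+Eur} applied to the generalized Eulerian triangle with $d=0,\lambda=1$, both $M_{n,r}$ and $\widetilde{M}_{n,r}$ are real-rooted with only nonpositive zeros, so $U$ and $V$ have only real zeros; moreover $\deg U = 2n$, $\deg V = 2n-1$, and both leading coefficients have sign $(-1)^n$. Thus the condition $V \ll U$ required by the Hermite--Biehler Theorem \ref{thm+HB} reduces to the classical interlacing
\[
M_{n,r}(y) \preceq \widetilde{M}_{n,r}(y).
\]

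The main content of the proof is therefore establishing this interlacing for every $r \ge 2$ and $n \ge 1$. I would do this by showing that the bivariate polynomial $\Psi_{n,r}(x,y) := \widetilde{M}_{n,r}(x) + yM_{n,r}(x)$ is real stable; by the bivariate form of Hermite--Biehler (stated in the paragraph following Theorem \ref{thm+HB}), this is equivalent to the desired interlacing. Proceed by induction on $n$, with base case $\Psi_{1,r}(x,y) = x+y$ trivially real stable. For the inductive step, both \eqref{poly+ipeak} and \eqref{poly+lpeak} share the differential part $rx(1-x)D_x$ and differ only through the mismatch
\[
\widetilde{M}_{n+1,r} - \bigl[(rnx+1)\widetilde{M}_{n,r} + rx(1-x)D_x\widetilde{M}_{n,r}\bigr] = (x-1)\widetilde{M}_{n,r},
\]
which allows one to package the simultaneous recurrence as a single linear operator on $\Psi_{n,r}(x,y)$ with the correction absorbed into a $y$-dependent shift; its real stability would then be verified by computing its algebraic symbol and invoking Theorem \ref{thm+RS+Peter}(b), in the spirit of the proof of Theorem \ref{thm+FG}. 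The main obstacle is exactly this correction: the factor $(x-1)$ is not Hurwitz-friendly, so it cannot be handled by a common univariate operator on $M_{n,r}$ and $\widetilde{M}_{n,r}$ separately, and designing the bivariate operator that absorbs it into the $y$-direction, together with verifying that its resulting three-variable algebraic symbol is stable, is the principal technical step. Once this is in place, the induction closes and the three-step reduction confirms the conjecture.
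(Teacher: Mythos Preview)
The statement you are attempting is left as an \emph{open conjecture} in the paper; the authors settle only the case $r=2$ (Proposition~\ref{prop+dec+peak}) and explicitly leave $r\ge 3$ unresolved. Your reduction via Hermite--Biehler is correct and is in fact an equivalence: applying Theorem~\ref{thm+HS+tran} directly to $P_{n,r}(z)=zM_{n,r}(z^2)+\widetilde{M}_{n,r}(z^2)$ (which spares you the rotation $x\mapsto -ix$) shows that Hurwitz stability of $P_{n,r}$, hence of $T_{n,r}$, is equivalent to $M_{n,r}\ll\widetilde{M}_{n,r}$. Since $M_{n,r}=\mathcal{I}_n(\widetilde{M}_{n,r})$, this is exactly the condition $\mathcal{I}_n(\widetilde{M}_{n,r})\ll\widetilde{M}_{n,r}$ that the paper notes would yield the alternatingly increasing property of both peak polynomials. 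So you have correctly rephrased the conjecture, but not advanced beyond it.

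The gap is precisely where you locate it. Combining \eqref{poly+ipeak} and \eqref{poly+lpeak} gives, for $\Psi_{n,r}=\widetilde{M}_{n,r}+yM_{n,r}$,
\[
\Psi_{n+1,r}=(rn+1)x\,\Psi_{n,r}+rx(1-x)\,\partial_x\Psi_{n,r}+(1-x)\,y\,\partial_y\Psi_{n,r},
\]
and the problematic piece is the $(1-x)y\partial_y$ term: the factor $(1-x)$ changes sign across $x=1$, so the associated symbol in Theorem~\ref{thm+RS+Peter} is not sign-definite, and the standard Borcea--Br\"and\'en criteria do not apply directly. You acknowledge this is the ``principal technical step'' but do not carry it out; doing so would resolve an open problem. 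Contrast this with the paper's treatment of $r=2$: there the inhomogeneous term $\tfrac{r-2}{2}(1-x)\widetilde{M}_{n,r}(x^2)$ in the recurrence for $T_{n,r}$ vanishes, so $T_{n,2}$ itself satisfies a clean recurrence of the shape covered by Theorem~\ref{thm+HS} (indeed coinciding with \eqref{rec+gf+alt+run+Stir+Perm}), Hurwitz stability is proved \emph{directly}, and the interlacing is then \emph{deduced} from it via Theorem~\ref{thm+HS+tran}. For $r\ge 3$ neither the paper's direct route nor your reverse route is presently complete.
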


Note $M_{n,r}(x)=\mathcal{I}_n(\widetilde{M}_{n,r}(x))$. Thus if
this conjecture is true, then it implies that both $M_{n,r}(x)$ and
$\widetilde{M}_{n,r}(x)$ are alternatingly increasing for all $r\ge2
$ and $n\in \mathbb{N}$. In the following, we will prove this
conjecture for $r=2$. Before it, we need a criterion for two
zeros-interlacing polynomials.

Suppose that
$$
f(z) = \sum\limits_{k=0}^{n}a_kz^k.
$$
Let
$$
f^{E}(z) = \sum\limits_{k=0}^{\lfloor n/2 \rfloor}a_{2k}z^k
\quad \text{and} \quad
f^{O}(z) = \sum\limits_{k=0}^{\lfloor (n-1)/2 \rfloor}a_{2k+1}z^k.
$$
Then, the following result is an equivalent form of Hermite-Biehler
Theorem.
\begin{thm}\label{thm+HS+tran}\emph{\cite[Theorem 6.3.4]{RS02}}
Let $f(z)=zf^{O}(z^2)+f^{E}(z^2)$ be a polynomial with real
coefficients. Suppose that $f^{E}(z)f^{O}(z)  \not\equiv 0$. Then
$f(z)$ is Hurwitz stable if and only if $f^{E}(z)$ and $f^{O}(z)$
have only real and non-positive zeros, and $f^{O}(z) \ll f^{E}(z)$.
\end{thm}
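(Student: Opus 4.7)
The plan is to reduce Theorem \ref{thm+HS+tran} to the classical Hermite--Biehler Theorem (Theorem \ref{thm+HB}) via a standard change of variables that converts Hurwitz stability into upper-half-plane stability. Since $f$ has real coefficients, $f(z)$ is Hurwitz stable if and only if $g(z) := f(iz)$ is stable in the upper half-plane sense, because $z \mapsto iz$ is a bijection between the right half-plane and the upper half-plane. A direct computation gives
\[
  g(z) = f(iz) = f^E((iz)^2) + (iz)f^O((iz)^2) = F^E(z) + i\,F^O(z),
\]
where $F^E(z) := f^E(-z^2)$ and $F^O(z) := z\,f^O(-z^2)$ are real polynomials.

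Next, I would invoke Theorem \ref{thm+HB} to conclude that $F^E(z) + i\,F^O(z)$ is stable if and only if $F^O(z) \ll F^E(z)$; in particular this forces both $F^E$ and $F^O$ to be real-rooted. Because $F^E$ is an even polynomial and $F^O$ is an odd polynomial, their zeros come in $\pm$ pairs (with $F^O$ carrying an additional zero at the origin). The equation $-z^2 = w$ has a real solution precisely when $w \le 0$, so $F^E$ has all real zeros if and only if $f^E(w)$ has all its zeros in $(-\infty,0]$, and similarly for $F^O$ and $f^O$.

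Finally, I would translate the interlacing $F^O \ll F^E$ on the real $z$-line into the interlacing $f^O \ll f^E$ on the real $w$-line. Concretely, if the non-positive zeros of $f^E$ are $-a_1 \le -a_2 \le \cdots$ and those of $f^O$ are $-b_1 \le -b_2 \le \cdots$, then the zeros of $F^E, F^O$ on the positive real axis are the decreasing sequences $\sqrt{a_i}, \sqrt{b_j}$ (and $0$ for $F^O$); by the reflection symmetry about the origin, interlacing of the $F^O$-zeros with the $F^E$-zeros on all of $\mathbb R$ is equivalent to interlacing of $\{b_j\}$ with $\{a_i\}$, which is exactly $f^O \ll f^E$ as polynomials in $w$.

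The main obstacle is this last bookkeeping step: one must verify that the sign condition on leading coefficients hidden in the definition of $\ll$, together with the extra zero of $F^O$ at $0$, match up correctly on both sides, and one must split into cases according to the parity of $\deg(f)$ (which controls whether $\deg f^E \ge \deg f^O$ or vice versa, and hence determines the leading-coefficient signs of $F^E, F^O$). Once the pairing of positive $z$-zeros with non-positive $w$-zeros is set up carefully and the degree/sign cases are checked, the equivalence $F^O \ll F^E \iff f^O \ll f^E$ is immediate, completing the proof.
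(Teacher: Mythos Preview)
The paper does not supply its own proof of Theorem~\ref{thm+HS+tran}: it is quoted directly from Rahman--Schmeisser \cite[Theorem~6.3.4]{RS02} and used as a black box in the proof of Proposition~\ref{prop+dec+peak}. So there is nothing in the paper to compare your argument against. Your plan---rotate by a quarter turn to convert Hurwitz stability into upper-half-plane stability, then apply the Hermite--Biehler Theorem (Theorem~\ref{thm+HB})---is the standard route, and essentially how the result is proved in \cite{RS02}.

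There is, however, a genuine sign error in your first step. You need the substitution to pull the \emph{upper} half-plane back to the \emph{right} half-plane, but $z\mapsto iz$ sends the upper half-plane to the \emph{left} half-plane (if $\Im z>0$ then $\Re(iz)=-\Im z<0$). Thus ``$f$ is Hurwitz stable $\Longleftrightarrow$ $f(iz)$ is stable'' is false; the correct choice is $g(z)=f(-iz)=F^{E}(z)-iF^{O}(z)$. With your version one would be led to $F^{O}\ll F^{E}$, but already for the Hurwitz stable polynomial $f(z)=(z+1)(z+2)$ one gets $F^{E}(z)=-z^{2}+2$ and $F^{O}(z)=3z$, for which $F^{O}\ll F^{E}$ fails (the opposite-sign leading coefficients would force $F^{E}\preceq F^{O}$, which is impossible by degree). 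Once you flip the sign, the extra minus is absorbed into exactly the leading-coefficient and parity bookkeeping you already flagged as the ``main obstacle'', and the rest of your outline goes through.
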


Thus, we have the following result.
\begin{prop}\label{prop+dec+peak}
Let $(a,b)$ be the (symmetric) $\mathcal{I}_n$-decomposition of
$\widetilde{M}_{n,2}(x)$. Then $T_{n,2}(x)$ is Hurwitz stable for
$n \in \mathbb{N}$ and $b \ll a$. In particular, $M_{n,2}(x)$ and
$\widetilde{M}_{n,2}(x)$ are alternatingly increasing for $n \in
\mathbb{N}$.
\end{prop}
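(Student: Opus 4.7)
The proposition combines three outputs of the machinery already set up in the paper: the Hurwitz stability criterion Theorem \ref{thm+HS}, the even/odd transfer Theorem \ref{thm+HS+tran}, and the decomposition Lemma \ref{lem+Dec+ab}. My plan is to establish Hurwitz stability of $T_{n,2}(x)$ first, then read off the interlacing of $M_{n,2}$ and $\widetilde M_{n,2}$ from the even/odd halves of $(1+x)T_{n,2}(x)$, and finally invoke the identity $M_{n,2}=\mathcal{I}_n(\widetilde M_{n,2})$ recorded just before Proposition \ref{prop+peak+HS} to get everything in the statement.

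First I would specialize the recurrence just above the proposition to $r=2$: the correction term $\tfrac{r-2}{2}(1-x)\widetilde M_{n,r}(x^2)$ vanishes, leaving
$$T_{n+1,2}(x)=(2nx^2+x)T_{n,2}(x)+x(1-x^2)D_xT_{n,2}(x),$$
which fits \eqref{rec+T+HS} with $\bm{\alpha}=2n$, $\bm{\beta}=1$, $\bm{\gamma}=0$, $\bm{\mu}=-1$, $\bm{\varphi}=1$. A short induction on the leading coefficient (the $x^{m_n+2}$ terms of $\bm{\alpha} x^2 T_{n,2}$ and $\bm{\mu} x^3 D_x T_{n,2}$ sum to $\bm{\alpha}+m_n\bm{\mu}=2n-(2n-1)=1$) shows $\deg T_{n,2}=2n-1=:m_n$, so the hypotheses $\bm{\beta},\bm{\gamma},-\bm{\mu},\bm{\varphi}\ge 0$ and $\bm{\alpha}\ge-m_n\bm{\mu}\ge 0$ of Theorem \ref{thm+HS}(2) are verified for every $n\ge 2$ (where $m_n=2n-1\ne n$), while the base case $T_{2,2}(x)=x(1+x+x^2)$ is Hurwitz stable by direct inspection (and $T_{1,2}(x)=x$ handles $n=1$). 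The induction then gives that $T_{n,2}(x)$ is Hurwitz stable for all $n\ge 1$.

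Since $1+x$ is Hurwitz stable, so is $(1+x)T_{n,2}(x)$. By the defining identity \eqref{poly+peak}, the even and odd parts of $(1+x)T_{n,2}(x)$ with respect to the $x^2$-splitting are precisely $\widetilde M_{n,2}(x)$ and $M_{n,2}(x)$, both nontrivial for $n\ge 1$, so Theorem \ref{thm+HS+tran} yields $M_{n,2}(x)\ll\widetilde M_{n,2}(x)$. Using $M_{n,2}=\mathcal{I}_n(\widetilde M_{n,2})$, this is exactly $\mathcal{I}_n(\widetilde M_{n,2})\ll\widetilde M_{n,2}$, so Lemma \ref{lem+Dec+ab} produces $b\ll a$ for the $\mathcal{I}_n$-decomposition $(a,b)$ of $\widetilde M_{n,2}$ and the alternatingly increasing property of $\widetilde M_{n,2}$ through the Beck--Jochemko--McCullough criterion.

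The alternatingly increasing property of $M_{n,2}$ is then a brief coefficient chase: because every summand in \eqref{poly+lpeak} carries a factor of $x$, one has $\widetilde M_{n,2}(0)=0$ for $n\ge 1$, so the reversal $M_{n,2}(x)=x^n\widetilde M_{n,2}(1/x)$ has coefficients $m_j=\widetilde m_{n-j}$ for $j=0,\ldots,n-1$; the alternating chain required for $M_{n,2}$ reads $0\le\widetilde m_n\le\widetilde m_1\le\widetilde m_{n-1}\le\widetilde m_2\le\cdots$, which is merely a tail of the alternating chain $0=\widetilde m_0\le\widetilde m_n\le\widetilde m_1\le\widetilde m_{n-1}\le\cdots$ already delivered for $\widetilde M_{n,2}$. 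The main obstacle really is the very first step: pinning down the parameters and the degree growth so that Theorem \ref{thm+HS}(2) applies uniformly in $n$; once that bookkeeping is done, the rest is a clean pipeline through theorems and lemmas already assembled in the paper.
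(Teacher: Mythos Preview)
Your proof is correct and follows essentially the same route as the paper: Hurwitz stability of $T_{n,2}$ via Theorem \ref{thm+HS}(2) (the paper simply observes that the recurrence coincides with \eqref{rec+gf+alt+run+Stir+Perm} and cites Proposition \ref{prop+MW}, whose proof is exactly the check you carry out), then Theorem \ref{thm+HS+tran} and Lemma \ref{lem+Dec+ab} for $b\ll a$ and the alternatingly increasing property of $\widetilde M_{n,2}$. The only divergence is in the final step, where the paper deduces $\mathcal{I}_{n-1}(M_{n,2})\ll M_{n,2}$ from $M_{n,2}\ll\widetilde M_{n,2}=x\,\mathcal{I}_{n-1}(M_{n,2})$ and reapplies Lemma \ref{lem+Dec+ab}, while your direct coefficient chase is equally valid and arguably cleaner.
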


\begin{proof}
 By (\ref{poly+peak}), for $r=2$, we get
\begin{equation}\label{Rec+peak}
 (1+x)T_{n,2}(x)=xM_{n,2}(x^2)+\widetilde{M}_{n,2}(x^2).
\end{equation} Moreover, we have
$$
T_{n+1,2}(x)=(2nx^2+x)T_{n,2}(x)+x(1-x^2)D_xT_{n,2}(x),
$$
where $T_{0,2}(x)=1$ and $T_{1,2}(x)=x$. This coincides with
\eqref{rec+gf+alt+run+Stir+Perm}. Thus Proposition \ref{prop+MW}
implies that $T_{n,2}(x)$ is Hurwitz stable. By Theorem
\ref{thm+HS+tran}, we have
$$
    \mathcal{I}_n(\widetilde{M}_{n,2}(x))
  = M_{n,2}(x) \ll \widetilde{M}_{n,2}(x).
$$
It is equivalent that $b \ll a$ by Theorem \ref{lem+Dec+ab}. And
thus, $\widetilde{M}_{n,2}(x)$ is alternatingly increasing for all
$n \in \mathbb{N}$.

Let $(\widetilde{a},\widetilde{b})$ be the $\mathcal{I}_{n-1}$-decomposition
of $M_{n,2}(x)$. Note that $M_{n,2}(x) = \mathcal{I}_n(\widetilde{M}_{n,2}(x))$
and the degree of $M_{n,2}(x)$ is $n-1$, then
$$
M_{n,2}(x) \ll \widetilde{M}_{n,2}(x) = \mathcal{I}_n(M_{n,2}(x))
           = x \mathcal{I}_{n-1}(M_{n,2}(x)).
$$
That is to say, $\mathcal{I}_{n-1}(M_{n,2}(x)) \ll M_{n,2}(x)$, i.e,
$\widetilde{b} \ll \widetilde{a}$. Thus, $M_{n,2}(x)$ is alternatingly
increasing for all $n \in \mathbb{N}$.

\end{proof}

%
%

\begin{re}
\em The alternatingly increasing property of $M_{n,2}(x)$ and
$\widetilde{M}_{n,2}(x)$ was also proved in \cite[Theorem
12]{MMYY20} in a different way. Here our Proposition
\ref{prop+dec+peak} gives a stronger result than the alternatingly
increasing property.
\end{re}


\begin{thebibliography}{13}

\addcontentsline{toc}{section}{References}

\bibitem{Ag14}
J. Agapito,
On symmetric polynomials with only real zeros and nonnegative $\gamma$-vectors,
Linear Algebra Appl. 451 (2014) 260--289.

\bibitem{An84}
D. Andr\'e,
\'Etude sur les maxima, minima et s\'equences des permutations,
Ann. Sci. \'Ec. Norm. Sup\'er. 3 (1884) 121--135.

\bibitem{AGPS19}
K. Archer, A. Gregory, B. Pennington, S. Slayden,
Pattern restricted quasi-Stirling permutations,
Australas. J. Combin. 74 (2019) 389--407.

\bibitem{At20}
C.A. Athanasiadis,
Binomial Eulerian polynomials for colored permutations,
J. Combin. Theory Ser. A 173 (2020) Article 105214.

\bibitem{AC13}
C.A. Athanasiadis, C. Savvidou,
A symmetric unimodal decomposition of the derangement polynomial of type $B$,
arXiv:1303.2302.

\bibitem{BJM19}
M. Beck, K. Jochemko, E. McCullough, $h^*$-polynomials of zonotopes,
Trans. Amer. Math. Soc. 371 (2019) 2021--2042.

\bibitem{BST20}
A. Berget, H. Spink, D. Tseng, Log-concavity of matroid $h$-vectors
and mixed Eulerian numbers, arXiv:2005.01937.

\bibitem{Bir12}
G.D. Birkhoff,
A determinant formula for the number of ways of coloring a map,
Ann. of Math. (2) 14 (1912/13) 42--46.

\bibitem{BF10}
P. Blasiak, P. Flajolet, Combinatorial models of
creation-annihilation, S\'em. Lothar. Combin. 65 (2010/12) Art.
B65c, 78pp.


\bibitem{B12}
M. B\'ona,
Combinatorics of Permutations,
(Second ed.), CRC Press, Boca Raton, FL (2012).

\bibitem{BB09}
J. Borcea, P. Br\"and\'en, The Lee-Yang and P\'{o}lya-Schur
programs. I. Linear operators preserving stability, Invent. Math.
177 (2009) 541--569.

\bibitem{BB09CPAM}
J. Borcea, P. Br\"and\'en, The Lee-Yang and P\'olya-Schur programs.
II. Theory of stable polynomials and applications, Comm. Pure Appl.
Math. 62 (2009) 1595--1631.

\bibitem{BB10}
J. Borcea, P. Br\"and\'en, Multivariate P\'olya-Schur classification
problems in the Weyl algebra, Proc. Lond. Math. Soc. (3) 101 (2010)
73--104.

\bibitem{BBT09}
J. Borcea, P. Br\"and\'en, T.M Liggett, Negative dependence and the
geometry of polynomials, J. Amer. Math. Soc. 22 (2009) 521--567.


\bibitem{Bra04}
P. Br\"and\'en,
Sign-graded posets, unimodality of $W$-polynomials and
the Charney-Davis conjecture,
Electron. J. Combin. 11 (2004/06) R9.

\bibitem{Bra06}
P. Br\"and\'en,
On linear transformations preserving the P\'olya frequency property,
Trans. Amer. Math. Soc. 358 (2006) 3697--3716.

\bibitem{Bra07}
P. Br\"and\'en,
Polynomials with the half-plane property and matroid theory,
Adv. Math. 216 (2007) 302--320.

\bibitem{Bra15}
P. Br\"andr\'en,
Unimodality, log-concavity, real-rootedness and beyond,
Handbook of enumerative combinatorics, 437-483, CRC Press, 2015.

\bibitem{BH20}
P. Br\"and\'en, J. Huh, Lorentzian polynomials, Ann. of Math. (2)
192 (3) (2020) 821--891.



\bibitem{BL20}
P. Br\"and\'en, M. Leander, Lecture hall $P$-partitions, J. Comb. 11
(2) (2020) 391--412.

\bibitem{BS21}
P. Br\"and\'en, L. Solus, Symmetric decompositions and
real-rootedness, Int. Math. Res. Not. IMRN 10 (2021) 7764--7798.



\bibitem{Bre94}
F. Brenti,
Log-concave and unimodal sequences in algebra, combinatorics, and geometry: an update,
Contemp. Math. 178 (1994) 71--89.

\bibitem{Bre94EJC}
F. Brenti,
$q$-Eulerian polynomials arising from Coxeter groups,
European J. Combin. 15 (1994) 417-441.

\bibitem{Bre00}
F. Brenti,
A class of $q$-symmetric functions arising from plethysm,
J. Combin. Theory Ser. A 91 (2000) 137--170.



\bibitem{CS88}
Ch.A. Charalambides, J. Singh,
A review of the Stirling numbers, their generalizations and statistical applications,
Comm. Statist. Theory Methods 17 (1988) 2533--2595.

\bibitem{CGV15}
M. Chasse, L. Grabarek, M. Visontai, Stable regions of Tur\'an
expressions, J. Approx. Theory 192 (2015) 144--155.


\bibitem{Che08}
D. Chebikin, Variations on descents and inversions in permutations,
Electron. J. Combin. 15 (2008) R132.

\bibitem{CWY11}
W.Y.C. Chen, L.X.W. Wang, A.L.B. Yang,
Recurrence relations for strongly $q$-log-convex polynomials,
Canad. Math. Bull. 54 (2011) 217--229.

\bibitem{DZ21}
M.-J. Ding, B.-X. Zhu,
A class of polynomials related to $q$-analog of the generalized
derivative polynomials,
Preprint.


\bibitem{Eli21}
S. Elizalde,
Descents on quasi-Stirling permutations,
J. Combin. Theory Ser. A 180 (2021) 105429, 35pp.

\bibitem{Fi06}
S. Fisk,
Polynomials, roots, and interlacing,
arXiv:math/0612833.

\bibitem{Fla80}
P. Flajolet,
Combinatorial aspects of continued fractions,
Discrete Math. 32 (1980) 125--161.

\bibitem{FS73}
D. Foata, M.-P. Sch¨¹tzenberger,
Nombres d\'Euler et permutations alternantes,
in: J.N. Srivastava, et al. (Eds.),
A Survey of Combinatorial Theory, North-Holland, Amsterdam, (1973) 173--187.


\bibitem{Gal05}
S.R. Gal,
Real root conjecture fails for five- and higher-dimensional spheres,
Discrete Comput. Geom. 34 (2005) 269--284.

\bibitem{GS78}
I.M. Gessel, R.P. Stanley, Stirling polynomials, J. Combin. Theory
Ser. A 24 (1978) 24--33.

\bibitem{GZ14}
I.M. Gessel, Y. Zhuang, Counting permutations by alternating
descents, Electron. J. Combin. 21 (2014) \#P4.23.


\bibitem{GNRZ19}
M. Griffin, K. Ono, L. Rolen, D. Zagier,
Jensen polynomials for the Riemann zeta function and other sequences,
Proc. Natl. Acad. Sci. USA 116 (2019) 11103--11110.

\bibitem{HWY15}
X.J. Hao, X.W. Wang, R.L. Yang, Context-free Grammars for Triangular
Arrays, Acta Math. Sin. (Engl. Ser.) 31 (2015) 445--455.

\bibitem{Hof95}
M.E. Hoffman,
Derivative polynomials for tangent and secant,
Amer. Math. Monthly 102 (1995) 23--30.

\bibitem{Hof99}
M.E. Hoffman,
Derivative polynomials, Euler polynomials, and associated integer sequences,
Electron. J. Combin. 6 (1999) R21.

\bibitem{Hya16}
M. Hyatt,
Recurrences for Eulerian Polynomials of Type $B$ and Type $D$,
Ann. Comb. 20 (2016) 869--881.



\bibitem{JV14}
M. Josuat-Verg\`es,
Enumeration of snakes and cycle-alternating permutations,
Australas. J. Combin. 60 (2014) 279--305.

\bibitem{KB67}
D.E. Knuth, T.J. Buckholtz,
Computation of tangent, Euler and Bernoulli numbers,
Math. Comp. 21 (1967) 663--688.



\bibitem{Lea19}
J. Leake, Analytic and combinatorial features of stable polynomials,
Thesis (Ph.D.)-University of California, Berkeley. 2019.

\bibitem{LY52}
T.D. Lee, C.N. Yang, Statistical theory of equations of state and
phase transitions. II. Lattice gas and Ising model, Phys. Rev. (2)
87 (1952) 410--419.

\bibitem{L15}
Z. Lin,
On the descent polynomial of signed multipermutations,
Proc. Amer. Math. Soc. 143 (2015) 3671--3685.


\bibitem{LMWW20}
Z. Lin, S.-M. Ma, D.G.L. Wang, L. Wang,
Positivity and divisibility of alternating descent polynomials,
arXiv:2011.02685.

\bibitem{LW}
L.L. Liu, Y. Wang, A unified approach to polynomial sequences with
only real zeros, Adv. in Appl. Math. 38 (2007) 542--560.
\bibitem{LW07}
L.L. Liu, Y. Wang,
On the log-convexity of combinatorial sequences,
Adv. in Appl. Math. 39 (2007) 453--476.

\bibitem{M12}
S.-M. Ma,
Derivative polynomials and enumeration of permutations by number of interior and left peaks,
Discrete Math. 312 (2012) 405--412.

\bibitem{MFMY21}
S.-M. Ma, Q. Fang, T. Mansour, Y.-N. Yeh,
Alternating Eulerian polynomials and left peak polynomials,
arXiv:2104.09374.

\bibitem{MMY19}
S.-M. Ma, J. Ma, Y.-N. Yeh,
Alternatingly increasing property and bi-gamma-posivity of polynomials,
arXiv:1907.13082

\bibitem{MMY20}
S.-M. Ma, J. Ma, Y.-N. Yeh,
David-Barton type identities and alternating run polynomials,
Adv. in Appl. Math. 114 (2020) Article 101978.

\bibitem{MMYY20}
S.-M. Ma, J. Ma, J. Yeh, Y.-N.
Yeh, Eulerian pairs and Eulerian recurrence systems,
arXiv:2010.09513.

\bibitem{MW16}
S.-M. Ma, H.-N. Wang,
Enumeration of a dual set of Stirling permutations by their alternating runs,
Bull. Aust. Math. Soc. 94 (2016) 177--186.

\bibitem{MW08}
S.-M. Ma, Y. Wang,
$q$-Eulerian polynomials and polynomials with only real zeros,
Electron. J. Combin. 15 (1) (2008) R17.

\bibitem{MY16}
S.-M. Ma, Y.-N. Yeh,
Enumeration of permutations by number of alternating descents,
Discrete Math. 339 (2016) 1362--1367.

\bibitem{MMS15}
A.W. Marcus, D.A. Spielman, N. Srivastava,
Interlacing families I: Bipartite Ramanujan graphs of all degrees,
Ann. of Math. (2) 182 (2015) 307--325.

\bibitem{M66}
M. Marden,
Geometry of Polynomials,
Math. Surveys Monogr., vol. 3, Amer. Math. Soc., Providence, RI, 1966.

\bibitem{Pet07}
T.K. Petersen,
Enriched P-partitions and peak algebras,
Adv. Math. 209 (2007) 561--610.


\bibitem{Pos09}
A. Postnikov,
Permutohedra, associahedra, and beyond,
Int. Math. Res. Not. IMRN 6 (2019) 1026--1106.

\bibitem{RS02}
Q.I. Rahman, G. Schmeisser, Analytic Theory of Polynomials, London
Math. Soc. Monographs (N.S.) 26, Oxford University Press, New York,
2002.

\bibitem {Re12}
J.B. Remmel, Generating functions for alternating descents and
alternating major index, Ann. Comb. 16 (2012) 625--650.

\bibitem{Rio58}
J. Riordan,
An Introduction to Combinatorial Analysis,
Wiley Publications in Mathematical Statistics,
John Wiley $\&$ Sons, Inc./Chapman $\&$ Hall, Ltd., New York/London, 1958.



\bibitem{SS12}
C.D. Savage, M.J. Schuster, Ehrhart series of lecture hall polytopes
and Eulerian polynomials for inversion sequences, J. Combin. Theory
Ser. A 119 (2012) 850--870.

\bibitem{SV12}
C.D. Savage, G. Viswanathan, The $(1/k)$-Eulerian Polynomials,
Electron. J. Combin. 19 (2012) R9.

\bibitem{SV13}
J. Schepers, L.V Langenhoven, Unimodality questions for integrally
closed lattice polytopes, Ann. Comb. 17 (2013) 571--589.

\bibitem{Slo}
N.J.A. Sloane,
The on-line encyclopedia of integer sequences, http://oeis.org.



\bibitem{Sta89}
R.P. Stanley,
Log-concave and unimodal sequences in algebra, combinatorics, and geometry,
Ann. New York Acad. Sci. 576 (1989) 500--534.

\bibitem{St94}
E. Steingr\'imsson,
Permutation statistics of indexed permutations,
European J. Combin. 15 (1994) 187--205.

\bibitem{St97}
J. Stembridge,
Enriched P-partitions,
Trans. Amer. Math. Soc. 349 (1997) 763--788.

\bibitem{Sz48}
G. Szeg\"o,
On an inequality of P. Tur\'an concerning Legendre polynomials,
Bull. Amer. Math. Soc. 54 (1948) 401--405.



\bibitem{T50}
P. Tur\'an,
On the zeros of the polynomials of Legendre,
\v{C}asopis P\v{e}st. Mat. Fys. 75 (1950) 113--122.

\bibitem{Wan211}
D.G. Wagner,
Multivariate stable polynomials: theory and applications,
Bull. Amer. Math. Soc. (N.S.) 48 (2011) 53--84.

\bibitem{WY06}
Y. Wang, Y.-N. Yeh,
Polynomials with real zeros and P\'olya frequency sequences,
J. Combin. Theory Ser. A 109 (2005) 63--74.

\bibitem{YL52}
C.N. Yang, T.D. Lee,
Statistical theory of equations of state and phase transitions.
I. Theory of condensation,
Phys. Rev. (2) 87 (1952) 404--409.

\bibitem{YZ15}
A.L.B. Yang, P.B. Zhang,
The real-rootedness of Eulerian polynomials via the Hermite-Biehler theorem,
Discrete Math. Theor. Comput. Sci. Proc. FPSAC¡¯15. (2015) 465--474.

\bibitem{Zha11}
A.F.Y. Zhao,
The combinatorics on permutations and derangements of type $B$,
PhD dissertation, Nankai University, (2011).

\bibitem{Zhu13}
B.-X. Zhu,
Log-convexity and strong $q$-log-convexity for some triangular arrays,
Adv. in Appl. Math. 50 (2013) 595--606.

\bibitem{Z18ejc}
B.-X. Zhu,
$q$-log-convexity from linear transformations and polynomials with only real zeros,
European J. Combin. 73 (2018) 231--246.

\bibitem{Zhu18sdm}
B.-X. Zhu,
Positivity of iterated sequences of polynomials,
SIAM J. Discrete Math. 32 (2018) 1993--2010.

\bibitem{Zhu20jcta}
B.-X. Zhu,
A generalized Eulerian triangle from staircase tableaux and tree-like tableaux,
J. Combin. Theory Ser. A 172 (2020) Article 105206, 32pp.

\bibitem{Zhu21}
B.-X. Zhu,
On a Stirling-Whitney-Riordan triangle,
J. Algebraic Combin. (2021), http://link.springer.com/article/10.1007/s10801-021-01035-9.


\bibitem{Zhu20aam}
B.-X. Zhu,
Stieltjes moment properties and continued fractions from combinatorial triangles,
Adv. in Appl. Math. 130 (2021) Artical 102232, 33pp.


\end{thebibliography}
\end{document}